\pgfplotsset{compat=newest}
\definecolor{MyDarkBlue}{cmyk}{0.8,0.3,0.8,0.4}
\definecolor{yellow}{rgb}{0.99,0.99,0.70}
\definecolor{white}{rgb}{1.0,1.0,1.0}
\definecolor{black}{rgb}{0.00,0.00,0.00}
\numberwithin{equation}{section}
\newcommand{\be}{\begin{eqnarray}}
\newcommand{\ee}{\end{eqnarray}}
\newcommand{\ce}{\begin{eqnarray*}}
\newcommand{\de}{\end{eqnarray*}}
\newtheorem{theorem}{Theorem}[section]
\newtheorem{lemma}[theorem]{Lemma}
\newtheorem{remark}[theorem]{Remark}
\newtheorem{definition}[theorem]{Definition}
\newtheorem{proposition}[theorem]{Proposition}
\newtheorem{Examples}[theorem]{Example}
\newtheorem{corollary}[theorem]{Corollary}
\def\var{{\mathrm{var}}}
\def\eps{\epsilon}
\def\e{\mathrm{e}}
\def\p{\partial}
\def\[{{\Big[}}
\def\]{{\Big]}}
\def\<{{\langle}}
\def\>{{\rangle}}
\def\({{\Big(}}
\def\){{\Big)}}
\def\by{{\mathbf{y}}}
\def\bx{{\mathbf{x}}}
\def\dif{{\mathord{{\rm d}}}}
\def\no{\nonumber}
\def\={&\!\!=\!\!&}
\def\cB{{\mathcal B}}
\def\cC{{\mathcal C}}
\def\cD{{\mathcal D}}
\def\cE{{\mathcal E}}
\def\cH{{\mathcal H}}
\def\cW{{\mathcal W}}
\def\mE{{\mathbb E}}
\def\mI{{\mathbb I}}
\def\mN{{\mathbb N}}
\def\mP{{\mathbb P}}
\def\mR{{\mathbb R}}
\def\1{{\mathbf{1}}}
\def\sI{{\mathscr I}}
\def\sL{{\mathscr L}}
\def\sN{{\mathscr N}}
\def\sV{{\mathscr V}}
\def\fm{{\mathfrak m}}
\def\geq{\geqslant}
\def\leq{\leqslant}
\def\div{\mathord{{\rm div}}}
\def\var{{\mathrm{var}}}
\def\e{\mathrm{e}}
\def\p{\partial}
\def\[{{\Big[}}
\def\]{{\Big]}}
\def\<{{\langle}}
\def\>{{\rangle}}
\def\({{\Big(}}
\def\){{\Big)}}
\def\by{{\boldsymbol{y}}}
\def\bx{{\mathbf{x}}}
\def\dif{{\mathord{{\rm d}}}}
\def\no{\nonumber}
\def\={&\!\!=\!\!&}
\def\bt{\begin{theorem}}
\def\et{\end{theorem}}
\def\bl{\begin{lemma}}
\def\el{\end{lemma}}
\def\br{\begin{remark}}
\def\er{\end{remark}}
\def\bx{\begin{Examples}}
\def\ex{\end{Examples}}
\def\bd{\begin{definition}}
\def\ed{\end{definition}}
\def\bp{\begin{proposition}}
\def\ep{\end{proposition}}
\def\bc{\begin{corollary}}
\def\ec{\end{corollary}}
\def\geq{\geqslant}
\def\leq{\leqslant}
\def\div{\mathord{{\rm div}}}
\def\bxi{{\boldsymbol{\xi}}}
\def\<{\langle} \def\>{\rangle}
\def\bpf{\begin{proof}}
\def\epf{\end{proof}}
\begin{document}

\title[Transport maps in in diffusion models and sampling]
{Stochastic Transport Maps in Diffusion Models and Sampling}
\author{Xicheng Zhang}

\address{Xicheng Zhang:
School of Mathematics and Statistics, Beijing Institute of Technology, Beijing 100081, China\\
Faculty of Computational Mathematics and Cybernetics, Shenzhen MSU-BIT University, 518172 Shenzhen, China\\
		Email: xczhang.math@bit.edu.cn
 }

\thanks{{\it Keywords: \rm Transport maps, Superposition principle, Sampling, Diffusion models, Fokker-Planck equations}}

\thanks{
This work is supported by National Key R\&D program of China (No. 2023YFA1010103) and NNSFC grant of China (No. 12131019)  and the DFG through the CRC 1283 ``Taming uncertainty and profiting from randomness and low regularity in analysis, stochastics and their applications''. }

\begin{abstract}
In this work, we present a theoretical and computational framework for constructing stochastic transport maps between probability distributions using diffusion processes. We begin by proving that the time-marginal distribution of the sum of two independent diffusion processes satisfies a Fokker-Planck equation. Building on this result and applying Ambrosio-Figalli-Trevisan's superposition principle, we establish the existence and uniqueness of solutions to the associated stochastic differential equation (SDE). Leveraging these theoretical foundations, we develop a method to construct (stochastic) transport maps between arbitrary probability distributions using dynamical ordinary differential equations (ODEs) and SDEs. Furthermore, we introduce a unified framework that generalizes and extends a broad class of diffusion-based generative models and sampling techniques. Finally, we analyze the convergence properties of particle approximations for the SDEs underlying our framework, providing theoretical guarantees for their practical implementation. This work bridges theoretical insights with practical applications, offering new tools for generative modeling and sampling in high-dimensional spaces.


\end{abstract}

\maketitle
\setcounter{tocdepth}{2}

\tableofcontents

\section{Introduction}

\subsection{Transport Maps}
Given two probability distributions $\mu$ and $\nu$ defined on $\mR^d$, a transport map 
is a function $T: \mR^d \to \mR^d$ that ``pushes forward'' the mass from $\mu$ to $\nu$. 
Mathematically, this is expressed as:
$$
T_\# \mu = \nu \quad \Leftrightarrow \quad \nu(A) = \mu(T^{-1}(A)), \quad \forall A \in \cB(\mR^d),
$$
where $\cB(\mR^d)$ denotes the Borel sets of $\mR^d$. In simpler terms, $T$ redistributes the mass of $\mu$ to match the distribution of $\nu$. Transport maps provide a powerful framework for sampling from complex probability distributions, which is particularly useful in applications such as Bayesian inference, generative modeling, and Monte Carlo methods \cite{Villani2009, Santambrogio2015}.

\smallskip\noindent {\sf Sampling via Transport Maps.}
If a transport map $T$ is known, sampling from $\nu$ can be achieved efficiently through the following steps:
\begin{enumerate}
    \item Draw a sample $\xi \sim \mu$ from a simple distribution (e.g., a Gaussian).
    \item Map it through $T$, i.e., $\eta = T(\xi)$, to obtain a sample from the target distribution $\nu$.
\end{enumerate}

This approach is especially valuable when direct sampling from $\nu$ is difficult, but evaluating $T(x)$ and sampling from $\mu$ are straightforward. For example, in Bayesian statistics, posterior distributions are often complex and challenging to sample from directly. A transport map can be learned to transform a simple prior distribution (e.g., Gaussian) into the posterior distribution. Once the map is learned, posterior samples can be generated efficiently \cite{Marzouk2016, Spantini2018}.

\smallskip\noindent {\sf Challenges in Constructing Transport Maps.}
While transport maps offer a promising approach to sampling, constructing them is often non-trivial. 
Indeed, the existence of a transport map is not guaranteed in general. For instance:
\begin{itemize}
    \item If $\mu$ is a Dirac measure and $\nu$ is not, no such map exists.
    \item Even when $\mu$ and $\nu$ are absolutely continuous, finding an explicit or computational 
    transport map can be challenging.
\end{itemize}
However, under certain conditions, the existence of transport maps can be established. For example:
\begin{itemize}
    \item Moser's mapping and the Knothe--Rosenblatt rearrangement provide constructive methods to show the existence of transport maps for absolutely continuous measures \cite{Villani2009, Santambrogio2015}.
    \item Solving the Monge optimal transport problem yields a transport map that minimizes the total cost of moving probability mass. This not only provides a deterministic map but also ensures efficiency in sampling \cite{Brenier1991, Peyre2019}.
\end{itemize}
\smallskip\noindent {\sf Regularization and Approximation.}
In practice, exact solutions to the Monge problem can be computationally expensive, especially in high dimensions. To address this, entropic regularization (e.g., Sinkhorn distances) is often used to find smooth approximations of transport maps. These regularized maps are computationally tractable and provide a practical way to approximate the target distribution \cite{Cuturi2013, Genevay2018}.

In the context of deep learning, normalizing flows parameterize transport maps using neural networks. These maps push a simple distribution (e.g., Gaussian) onto a complex target distribution, enabling efficient sampling and density estimation. Normalizing flows have become a cornerstone of modern generative modeling \cite{Rezende2015, Kobyzev2020, Papamakarios2021}.

\smallskip\noindent {\sf Advantages of Sampling via Transport Maps.}
Transport maps offer several key advantages for sampling from complex distributions:
\begin{itemize}
    \item \texttt{Efficient Sampling}: Once the map is learned, generating new samples is computationally fast.
    \item \texttt{Handles Complex Distributions}: Transport maps can sample from multi-modal, non-Gaussian, and high-dimensional distributions.
    \item \texttt{Improves MCMC Methods}: By preconditioning distributions, transport maps can accelerate the convergence of Markov Chain Monte Carlo (MCMC) methods \cite{Parno2015, Heng2019}.
    \item \texttt{Provides Density Estimation}: Transport maps allow explicit computation of probability densities, which is useful for tasks like Bayesian inference \cite{Marzouk2016, Spantini2018}.
\end{itemize}

In summary, transport maps provide a powerful and flexible framework for sampling from complex probability distributions by transforming simple source distributions (e.g., Gaussian) into target distributions. 
On one hand, while constructing such maps can be challenging, methods like optimal transport, entropic regularization, and normalizing flows have made significant progress in addressing these difficulties. On the other hand, transport maps are widely used in Bayesian inference, generative modeling, and Monte Carlo methods, offering efficient sampling, density estimation, and improved convergence for MCMC. Despite the challenges, transport maps remain a cornerstone of modern computational statistics and machine learning.

The aim of this work is to provide a simple and computationally tractable approach to constructing transport maps, making them more accessible and easier to implement in practical applications. By focusing on methods that are relatively easy to realize in computations, we aim to bridge the gap between theoretical advancements and practical usability, enabling broader adoption of transport maps in real-world problems.

\subsection{Diffusion Models}

Diffusion models are a class of generative models that have gained significant attention in recent years due to their ability to generate high-quality samples, particularly in tasks such as image synthesis, denoising, and data reconstruction (see \cite{YZS23, CMFW24} for comprehensive surveys). These models are inspired by principles from non-equilibrium thermodynamics and operate by simulating a gradual process of adding noise to data (forward process) and then learning to reverse this process to recover the original data (reverse process). This unique approach allows diffusion models to transform simple noise distributions into complex data distributions, enabling the generation of realistic and diverse samples.

\smallskip\noindent {\sf The Forward Diffusion Process.}
The forward diffusion process systematically corrupts data by adding Gaussian noise over multiple time steps. This process gradually transforms the data into a noise distribution, typically a Gaussian, by following a predefined noise schedule. Mathematically, this can be described by a stochastic differential equation (SDE):
$$
\dif X_t = f_t X_t \dif t + \sigma_t \dif W_t,
$$
where $X_t$ represents the data at time $t$, $f_t$ and $\sigma_t$ are the drift and diffusion coefficients, respectively, and $W_t$ is a standard Brownian motion. By carefully designing $f_t$ and $\sigma_t$, the forward process ensures that the data distribution at the final time step $T$ is approximately Gaussian (see \cite{DWMG15, SD21} for details).

\smallskip\noindent{\sf The Reverse Denoising Process.}
The reverse process is the key to generating new data samples. It learns to denoise the data by reversing the forward diffusion process, step by step. This is achieved by training a neural network to predict the noise added at each time step, allowing the model to iteratively refine noisy data into clean samples. The reverse process can be formulated as another SDE or as a sequence of learned transitions, depending on the specific implementation (see \cite{HJA20, SD21}). Theoretically, the reverse process is governed by the Fokker-Planck equation, which describes the evolution of the probability density of the data over time.

\smallskip\noindent {\sf Advantages of Diffusion Models.}
Diffusion models have several notable advantages:
\begin{itemize}
    \item \texttt{High-Quality Samples}: They are capable of generating highly realistic and diverse samples, making them particularly effective for image synthesis and other generative tasks (see \cite{YZS23, CMFW24}).
    \item \texttt{Theoretical Foundations}: The connection to stochastic processes and differential equations provides a strong theoretical framework for understanding and improving these models (see \cite{DWMG15, SD21}).
    \item \texttt{Flexibility}: Diffusion models can be applied to a wide range of data types, including images, audio, and text (see \cite{HJA20, CLL23}).
\end{itemize}
\noindent {\sf Applications of Diffusion Models.}
One of the most prominent applications of diffusion models is in image generation, where they have been used to produce photorealistic images by sampling from a noise distribution and reversing the diffusion process (see \cite{YZS23, HJA20, CMFW24, LZ22}). They are also highly effective in denoising tasks, where the goal is to recover clean data from noisy inputs (see \cite{SD21, saharia2022}). 

In summary, diffusion models represent a powerful and versatile approach to generative modeling, combining strong theoretical foundations with practical applications in image synthesis, denoising, and beyond. Their ability to generate high-quality samples and their flexibility in handling diverse data types make them a key area of research in modern machine learning. 

Another aim of this paper is to establish a {\it unified and rigorous theoretical foundation} for various diffusion models. In our framework, the traditional procedures of adding noise and denoising are replaced by a direct learning approach. Specifically, we propose using a neural network to learn the transport map from the Gaussian distribution to the data/target distribution. Once the transport map is learned, samples can be generated directly without the need for iterative noise addition and removal. This approach not only simplifies the generative process but also provides a more efficient and theoretically sound  framework for diffusion models.

\subsection{Main Results}
Throughout this paper, we fix the dimension $d \in \mathbb{N}$ and a complete filtered probability space $(\Omega, \mathcal{F}, \mathbb{P}; (\mathcal{F}_t)_{t \geq 0})$. Unless stated otherwise, the expectation with respect to $\mathbb{P}$ is denoted by $\mathbb{E}$, and for $p \in [1, \infty)$, the norm in the $L^p$-space over $(\Omega, \mathcal{F}, \mathbb{P})$ is denoted by $\|\cdot\|_p$.

Let $\xi_0 \sim \mu$ and $\eta \sim \nu$ be two independent random variables. Assume that for some constant $K > 0$, the initial distribution $\mu$ and the target distribution $\nu$ satisfy
\begin{align}\label{C12}
\mu(\dif x) = \rho_0(x) \dif x, \quad 0 < \rho_0(x) \leq K, \quad \mathbb{E}|\xi_0| + \mathbb{E}|\eta| < \infty.
\end{align}
Let $\sigma_t,\beta_t:[0,1]\to[0,1]$ be two $C^1$-functions and satisfy 
\begin{align}\label{Sig0}
\boxed{\beta_t=1-\sigma_t,\ \ \sigma_0=1,\ \ \sigma_1=0,\ \ \sigma'_t<0.}
\end{align}
Below are typical functions of $\sigma_t$ and $\beta_t$ used in practical simulations.
\begin{center}
\begin{tikzpicture}
  \begin{axis}[
    axis lines=middle,
    xlabel={$t$},
    ylabel={$\sigma_t, \beta_t$},
    xmin=0, xmax=1.15,
    ymin=0, ymax=1.15,
    legend style={
      at={(0.5,-0.15)},
      anchor=north,
      draw=none,
      font=\tiny,
      legend columns=6, 
    },
    grid=major,
    samples=100,
    width=8cm,  
    height=5cm, 
  ]
  
    \addplot[
      domain=0:1,
      thick,
      black!70!black,
      dash pattern=on 2pt off 3pt,
    ]
    {1-x};
    \addlegendentry{$\sigma_t = 1-t$}

    \addplot[
      domain=0:1,
      thick,
      gray,
      dash pattern=on 2pt off 3pt,
    ]
    {x};
    \addlegendentry{$\beta_t = t$}
  
    \addplot[
      domain=0:1,
      thick,
      blue,
    ]
    {cos(deg(pi*x/2))^2};
    \addlegendentry{$\sigma_t = \cos^2\left(\frac{\pi t}{2}\right)$}

    \addplot[
      domain=0:1,
      thick,
      orange,
    ]
    {sin(deg(pi*x/2))^2};
    \addlegendentry{$\beta_t = \sin^2\left(\frac{\pi t}{2}\right)$}

    \addplot[
      domain=0:1,
      thick,
      green!70!black,
    ]
    {exp(-x/(1-x))};
    \addlegendentry{$\sigma_t = \e^{-\frac{t}{1-t}}$}

    \addplot[
      domain=0:1,
      thick,
      purple,
    ]
    {1-exp(-x/(1-x))};
    \addlegendentry{$\beta_t = 1-\e^{-\frac{t}{1-t}}$}
    
  \end{axis}
\end{tikzpicture}
\end{center}
For each $t \in [0, 1)$, we define the drift function
\begin{align}\label{BB91}
b_t(x) := \frac{\sigma_t'\mathbb{E}[(x-\eta) \rho_0((x - \beta_t\eta)/\sigma_t)]}
{\sigma_t\mathbb{E} \rho_0((x - \beta_t\eta)/\sigma_t)}.
\end{align}
Under assumption \eqref{C12}, it is easy to see that
$b_t(x)$ is well-defined, and $b_0(x) = \sigma_0'[x-\mathbb{E} \eta]$. Consider the following ordinary differential equation (ODE):
\begin{align}\label{ODE-0}
X_t' = b_t(X_t).
\end{align}
Here, the prime denotes the derivative with respect to the time variable. One of the main results of this paper, which follows from Theorem \ref{Th23}, is stated below.

\begin{theorem}\label{Th01}
Suppose that for some $p \geq 1$, $\mathbb{E}|\xi_0|^p + \mathbb{E}|\eta|^p < \infty$, and for some constants $K, \kappa > 0$, the density $\rho_0\in C^1$ satisfies
\begin{align}\label{SD102}
0 < \rho_0(x) \leq K, \quad |\nabla\rho_0(x)| \leq \kappa(1 + |x|)^{p-1}.
\end{align}
Then, for each initial point $x_0 \in \mathbb{R}^d$, the ODE \eqref{ODE-0} admits a unique solution $(X_t(x_0))_{t \in [0, 1)}$ with $X_0 = x_0$. Moreover, the following properties hold:
\begin{enumerate}[(i)]
\item For each $t\in[0,1)$,
the mapping $x_0\mapsto X_t(x_0)$ is continuously differentiable, 
and the compoistion $X_t(\xi_0)$ follows the distribution $\mu_t(\dif x) = \rho_t(x) \dif x$, where 
$$
\rho_t(x) = \sigma_t^{-d}\mathbb{E} \rho_0((x - \beta_t\eta)/\sigma_t).
$$
\item There exists a measurable map $x_0 \mapsto X_1(x_0)$ such that $X_1(\xi_0) \sim \nu$, and for all $t \in [0, 1]$,
$$
\|X_t(\xi_0) - X_1(\xi_0)\|_p \leq \sigma_t \|\xi_0 - \eta\|_p.
$$
In particular, the map $x_0\mapsto X_1(x_0)$ serves as a {\bf transport map} between $\mu$ and $\nu$. 
\end{enumerate}
\end{theorem}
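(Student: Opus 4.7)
The plan is to recognise the drift $b_t$ in \eqref{BB91} as a conditional expectation along a simple coupling and to carry that probabilistic interpretation through the argument. Introduce $Z_t := \sigma_t\xi_0 + \beta_t\eta$. By independence of $\xi_0,\eta$ and \eqref{Sig0}, $Z_t$ admits density $\rho_t(x) = \sigma_t^{-d}\mE\rho_0((x-\beta_t\eta)/\sigma_t)$, and the conditional law of $\eta$ given $Z_t=x$ has density proportional to $\rho_0((x-\beta_t y)/\sigma_t)\,\nu(\dif y)$. Since $\beta_t'=-\sigma_t'$ yields $\dot Z_t = \sigma_t'(\xi_0-\eta) = (\sigma_t'/\sigma_t)(Z_t-\eta)$, formula \eqref{BB91} rewrites as
\[
b_t(x) \;=\; \frac{\sigma_t'}{\sigma_t}\bigl(x-\mE[\eta\mid Z_t=x]\bigr) \;=\; \mE[\dot Z_t\mid Z_t=x],
\]
which is the recurring identity. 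Combined with \eqref{SD102} and $\mE|\eta|^p<\infty$, which permit differentiation under the expectation and keep the denominator locally bounded away from zero through $\rho_0>0$, this makes $b_t$ of class $C^1$ in $x$ and jointly continuous in $(t,x)$ on $[0,1)$, so that Cauchy--Lipschitz produces a unique $C^1$ flow $x_0\mapsto X_t(x_0)$ on every compact subinterval of $[0,1)$.

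To identify the marginal $(X_t)_\#\mu = \rho_t\dif x$, I would establish the continuity equation $\partial_t\rho_t + \div(b_t\rho_t) = 0$ directly: differentiating the explicit formula for $\rho_t$ in $t$ (using $\beta_t'=-\sigma_t'$) and comparing with $b_t\rho_t = \sigma_t^{-d-1}\sigma_t'\mE[(x-\eta)\rho_0((x-\beta_t\eta)/\sigma_t)]$ read off from \eqref{BB91} matches the two sides term by term. Since the flow is smooth, the push-forward density $\tilde\rho_t := (X_t)_\#\rho_0$ satisfies the same equation with initial datum $\rho_0$, and uniqueness for smooth transport equations (or the superposition principle referenced in Theorem~\ref{Th23}) forces $\tilde\rho_t=\rho_t$, proving~(i).

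For (ii) the decisive estimate is
\[
\|b_t(X_t(\xi_0))\|_p \;=\; \|\mE[\dot Z_t\mid Z_t]\|_p \;\leq\; \|\dot Z_t\|_p \;=\; |\sigma_t'|\,\|\xi_0-\eta\|_p,
\]
obtained by Jensen's inequality and the distributional identity $X_t(\xi_0)\stackrel{d}{=}Z_t$ just proved. Integrating the ODE gives the deterministic contraction $\|X_t(\xi_0)-X_s(\xi_0)\|_p \leq (\sigma_s-\sigma_t)\|\xi_0-\eta\|_p$ for $0\leq s\leq t<1$, so $(X_t(\xi_0))_{t\uparrow 1}$ is Cauchy in $L^p$. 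The limit $X_1(\xi_0)$ then satisfies $\|X_t(\xi_0)-X_1(\xi_0)\|_p\leq\sigma_t\|\xi_0-\eta\|_p$, and $X_1(\xi_0)\sim\nu$ because $Z_t\to\eta$ in distribution. To realise $X_1$ as a measurable map on $\mR^d$, I would extract a subsequence $t_n\uparrow 1$ along which $X_{t_n}(\xi_0)\to X_1(\xi_0)$ $\mP$-a.s.\ and set $X_1(x_0):=\limsup_n X_{t_n}(x_0)$ on the $\mu$-full-measure set where this $\limsup$ is finite. The main obstacle is precisely the singularity of $b_t$ at $t=1$: the factor $\sigma_t^{-1}$ in \eqref{BB91} generically forces pointwise blow-up and the classical flow does not extend to the terminal time; the conditional-expectation representation $b_t(x)=\mE[\dot Z_t\mid Z_t=x]$ is exactly the instrument that converts this pointwise divergence into integrable $L^p$ control, since $\int_0^1|\sigma_t'|\dif t = \sigma_0-\sigma_1 = 1$.
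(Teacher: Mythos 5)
Your proposal is correct and follows essentially the same route as the paper's proof of Theorem \ref{Th23}: the conditional-expectation identity $b_t(x)=\mE[\dot Z_t\mid Z_t=x]$ underlying \eqref{BB91}, the continuity equation satisfied by the explicit density, identification of the law of $X_t(\xi_0)$ via the superposition principle combined with the local Lipschitz bounds on $b$ coming from \eqref{SD102}, and the (conditional) Jensen bound $\|b_t(X_t(\xi_0))\|_p\leq|\sigma_t'|\,\|\xi_0-\eta\|_p$ yielding the $L^p$-Cauchy property $\|X_{t_1}(\xi_0)-X_{t_0}(\xi_0)\|_p\leq(\sigma_{t_0}-\sigma_{t_1})\|\xi_0-\eta\|_p$ that defines the limit map $X_1$ with $X_1(\xi_0)\sim\nu$ — the paper writes this last step as an explicit change of variables, which is the same estimate. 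One caveat: your primary way of identifying the marginal (push $\mu$ forward by the flow and invoke uniqueness for the transport equation) tacitly assumes the flow is defined up to any $t_0<1$ for $\mu$-a.e.\ $x_0$, i.e.\ no finite-time blow-up, which local Lipschitzness alone does not guarantee since $\cD_t(x)$ need not be bounded or of linear growth pointwise; the paper sidesteps this by applying the superposition principle to the explicit solution $\rho_t$ first (producing ODE trajectories on $[0,1)$ for $\mu$-a.e.\ starting point) and only then using uniqueness of characteristics, so your fallback via the superposition principle should be run in that direction rather than through a uniqueness theorem for the continuity equation.
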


\begin{remark}
If the target distribution $\nu$ has a density $\rho_1(x)$, 
then by the change of variable $x-y = \sigma_tz$, the drift $b$ can be rewritten as
\begin{align}\label{BB0}
\begin{split}
b_t(x) &= \frac{\sigma_t'\int_{\mathbb{R}^d} (x - y) \rho_0((x - \beta_ty)/\sigma_t) \rho_1(y) \dif y}
{\sigma_t \int_{\mathbb{R}^d} \rho_0((x - \beta_ty)/\sigma_t) \rho_1(y) \dif y}
= \frac{\sigma_t'\int_{\mathbb{R}^d} z \rho_0(x + \beta_tz) \rho_1(x - \sigma_tz) \dif z}
{\int_{\mathbb{R}^d} \rho_0(x + \beta_tz) \rho_1(x - \sigma_tz) \dif z}\\
&= \frac{\beta_t'\int_{\mathbb{R}^d} (x-y) \rho_0(y) \rho_1((x-\sigma_ty)/\beta_t) \dif y}
{\beta_t\int_{\mathbb{R}^d} \rho_0(y) \rho_1((x- \sigma_ty)/\beta_t) \dif y}
=\frac{\beta_t'\mE[(x-\xi_0)\rho_1((x- \sigma_t\xi_0)/\beta_t)]}{\beta_t\mE[\rho_1((x-\sigma_t\xi_0)/\beta_t)]},
\end{split}
\end{align}
and the density $\rho_t(x)$ can be rewritten as
$$
\rho_t(x) = \int_{\mathbb{R}^d} \rho_0\left(\tfrac{x}{\sigma_t} - \beta_ty\right) \rho_1(\sigma_ty) \dif y
= \int_{\mathbb{R}^d} \rho_0(\beta_tz) \rho_1\left(\tfrac{x}{\beta_t} - \sigma_tz\right) \dif z.
$$
In this case, we have
$$
b_0(x)=\sigma_0'[x-\mathbb{E} \eta],\ \ b_1(x)=\beta_1'[x-\mE\xi_0],
$$ 
and $\{\rho_t(x)\}_{t\in[0,1]}$ plays the role of interpolation between $\rho_0(x)$ and $\rho_1(x)$.
Furthermore, if $\rho_1\in C^1$ satisfies \eqref{SD102}, by \eqref{BB0}, $\nabla b_t(x)$ is uniformly bounded on compact sets:
$$
\sup_{t \in [0, 1], |x| \leq R} |\nabla b_t(x)| < \infty, \quad \forall R > 0.
$$
Thus, by standard ODE theory, $X_1(\cdot)$ is also continuously differentiable.
Note that when $\rho_0(x)\propto \e^{-|x|^2/2}$ and $|\eta|\leq K$, 
ODE \eqref{ODE-0} has been considered in \cite{Zh24}.
\end{remark}
\begin{remark}
Let $ d = 1 $ and $\sigma_t=\cos^2(\pi t/2), \beta_t=\sin^2(\pi t/2)$. We choose the initial distribution and the target distribution as follows:
\begin{align}\label{Rho1}
\rho_0(x) \propto (1+|x|^2)^{-1}, \quad \rho_1(x) \propto \left(1 + \frac{\sin(2\pi x) + \sin(4\pi x)}{2}\right) \1_{[0,1]}(x).
\end{align}
Based on ODE \eqref{ODE-0} and Euler's algorithm, we simulate $5000$-trajectories.
The following figure illustrates the transport map $ x \mapsto X_t(x) $ as defined by the ODE in \eqref{ODE-0}:
\begin{itemize}
    \item The left-hand plot displays 5000 simulated trajectories.
    \item The right-hand plot shows the kernel density estimate of the 5000 terminal points.
\end{itemize}
For an initial condition $ \xi_0 \sim \rho_0$, the expected value of $ X_1(\xi_0)\sim\rho_1 $ is given by  
$$
\mathbb{E}[X_1(\xi_0)] = \int_0^1 x \rho_1(x) \, \dif x = \tfrac{1}{2} - \tfrac{3\pi}{8} \approx 0.3804.
$$
The figure confirms that the numerical results align well with this theoretical prediction.
\begin{center}
\begin{minipage}{\textwidth}
\centering
\includegraphics[width=2.5in, height=1.7in]{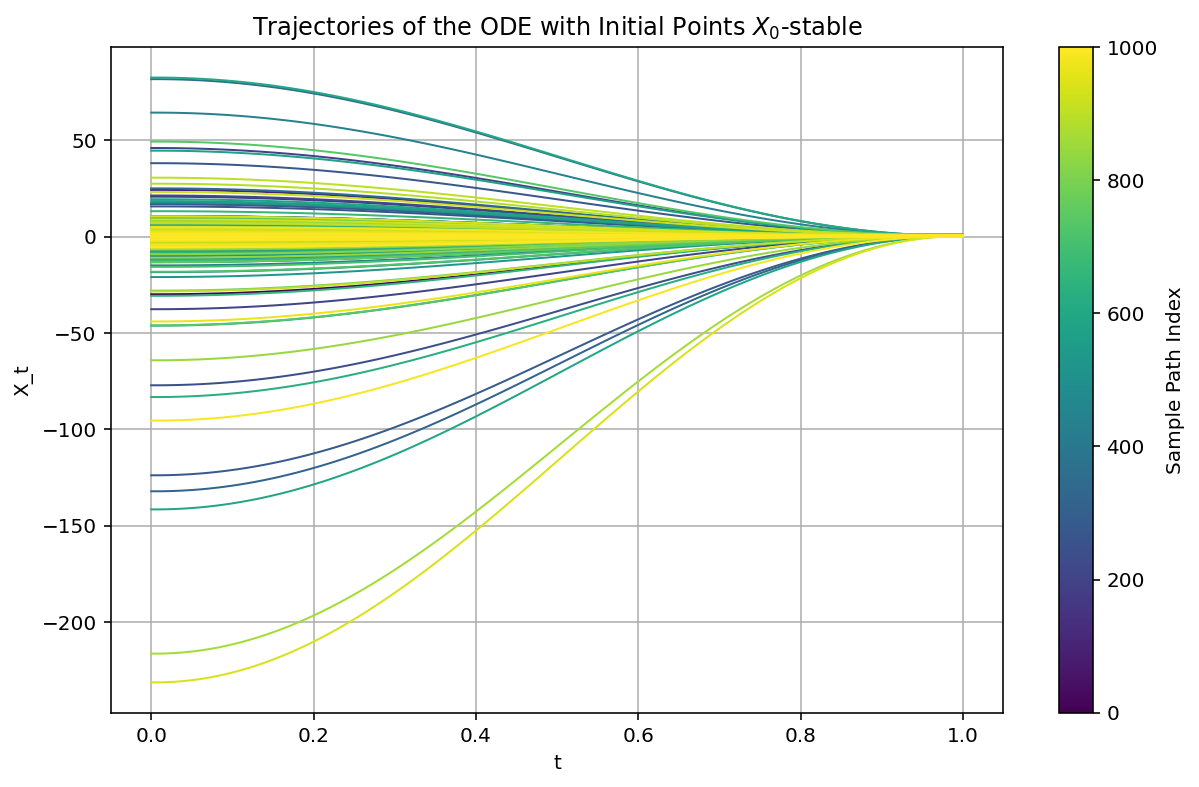}
\includegraphics[width=2.5in, height=1.7in]{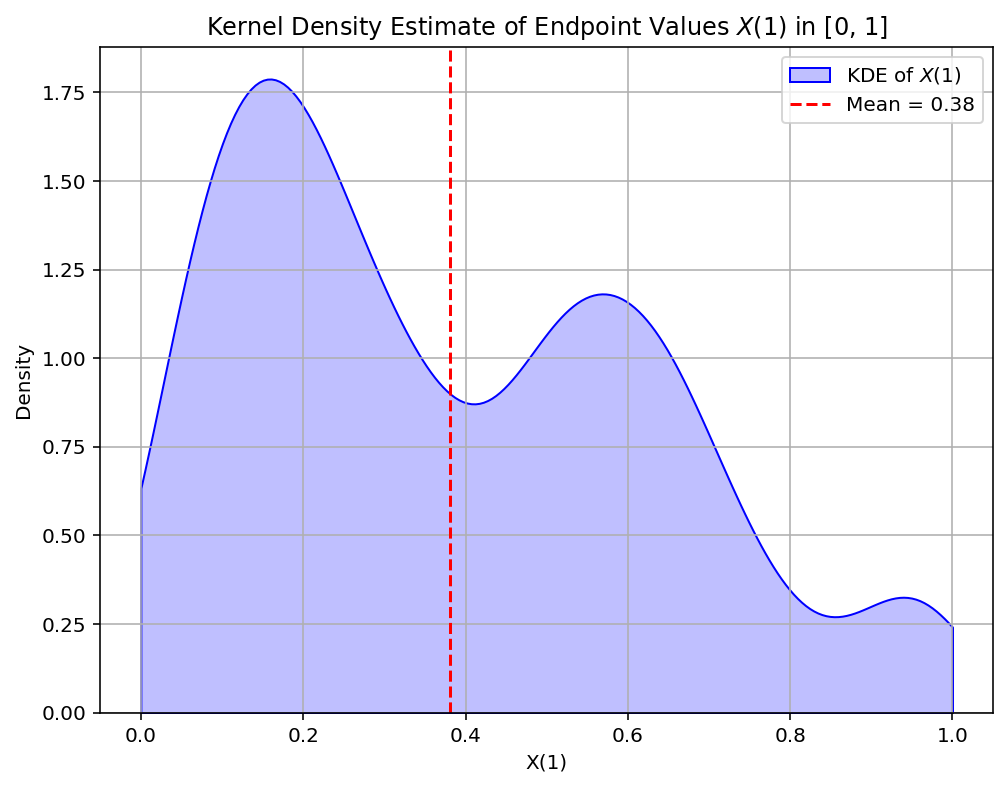}
\captionof{figure}{Deterministic transport map}
\end{minipage}
\end{center}
\end{remark}

The above results require the initial distribution $\mu$ to have a density $\rho_0$ satisfying \eqref{SD102}. To handle more general cases, we introduce a stochastic transport map that can transform any probability distribution $\mu$ into another distribution $\nu$ with compact support. For this purpose, for $\eps>0$, define the drift function
\begin{align}\label{DD108}
b^\eps_t(x) := \frac{\sigma_t'\mathbb{E}[(\eta - x) \e^{-|x - \beta_t\eta -\sigma_t\xi_0|^2/(2\eps \beta_t\sigma_t)}]}
{\sigma_t \mathbb{E} \e^{-|x - \beta_t\eta - \sigma_t\xi_0|^2/(2\eps\beta_t\sigma_t)}}, \quad \forall t \in (0, 1),
\end{align}
where $\xi_0 \sim \mu$ and $\eta \sim \nu$ are two independent random variables. 
Consider the following SDE:
\begin{align}\label{SDE06}
\dif X_t = b^\eps_t(X_t) \dif t + \sqrt{\eps} \dif W_t,
\end{align}
where $W$ is a $d$-dimensional standard Brownian motion independent of $\xi_0$ and $\eta$. The following theorem establishes the existence-uniqueness and properties of solutions to this SDE.

\begin{theorem}\label{Th02}
Suppose that $|\eta| \leq K$ for some constant $K > 0$. Then, for each initial point $x_0 \in \mathbb{R}^d$, the SDE \eqref{SDE06} admits a unique strong solution $(X_t(x_0))_{t \in [0, 1)}$. Furthermore, the following properties hold:
\begin{enumerate}[(i)]
\item The mapping $(t, x_0) \mapsto X_t(x_0)$ is jointly continuous in $t$ and $x_0$ on $[0,1)\times\mR^d$.
\item For each $t \in (0, 1)$ and $x_0 \in \mathbb{R}^d$, $X_t(x_0)$ has a density $p_t(x_0, x)$.
\item For each $t \in (0, 1)$, the random variable of composition $X_t(\xi_0)$ has the density
\begin{align}\label{Den1}
\rho_t(x) = \mathbb{E} \e^{-|x - \beta_t\eta - \sigma_t\xi_0|^2/(2\eps \beta_t\sigma_t)} / (2\pi \eps \beta_t\sigma_t)^{d/2}.
\end{align}
\item There exists a measurable random field $(X_1(x_0))_{x_0 \in \mathbb{R}^d}$ such that 
$$
X_1(\xi_0) \sim \nu,
$$ 
and for all $t \in (0, 1)$,
\begin{align}\label{Con96}
\|X_t(\xi_0) - X_1(\xi_0)\|_2 \leq \sigma_t \|\xi_0 - \eta\|_2 + 2\sqrt{d\eps \sigma_t}.
\end{align}
In particular, the map $x_0 \mapsto X_1(\omega, x_0)$ can be interpreted as a {\bf stochastic transport map}, 
transforming any sample $\xi_0 \sim \mu$ into $X_1(\xi_0) \sim \nu$.
\end{enumerate}
\end{theorem}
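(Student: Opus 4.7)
My plan proceeds in three stages, building on the paper's sum-of-diffusions Fokker--Planck theorem (announced in the abstract) and the Ambrosio--Figalli--Trevisan superposition principle. For well-posedness and joint continuity (parts (i) and (ii)): for each $\delta\in(0,1)$, the boundedness $|\eta|\le K$ keeps the denominator of \eqref{DD108} bounded above and below on compact $x$-sets uniformly in $t\in[0,1-\delta]$, while differentiating under the expectation yields local Lipschitz dependence and at most linear growth of $b^\eps_t(x)$. Classical strong existence and pathwise uniqueness then produce a unique strong solution with jointly continuous flow $(t,x_0)\mapsto X_t(x_0)$ on $[0,1)\times\mR^d$; the additive non-degenerate noise $\sqrt{\eps}\,\dif W$ together with the local smoothness of $b^\eps_t$ yields a transition density $p_t(x_0,x)$ for $t\in(0,1)$ via a parametrix or H\"ormander-type argument.

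For the core step (iii), I would introduce the stochastic interpolant $I_t^\eps:=\sigma_t\xi_0+\beta_t\eta+\sqrt{\eps\beta_t\sigma_t}\,Z$ with $Z\sim N(0,I_d)$ independent of $(\xi_0,\eta)$, whose marginal density equals \eqref{Den1}. The strategy is to write $I_t^\eps$ as a sum of two independent diffusion processes and invoke the paper's first main result to produce a Fokker--Planck equation for $\rho_t$. The drift in that equation is then matched to \eqref{DD108} by using the Bayes/score identity $\nabla\log\rho_t(x)=(\mathbb E[\sigma_t\xi_0+\beta_t\eta\mid I_t^\eps=x]-x)/(\eps\beta_t\sigma_t)$ together with the constraint $\beta_t+\sigma_t=1$, which allows the relevant conditional expectations to be re-expressed in the closed form appearing in \eqref{DD108}. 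Since $X_t(\xi_0)$ provides a martingale solution of the same Fokker--Planck equation (with the integrability conditions of the superposition principle verifiable from the explicit form of $b^\eps_t$), its law must coincide with $\rho_t$.

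For (iv), I would define $X_1(\omega,x_0):=\lim_{t\uparrow 1}X_t(\omega,x_0)$. An $L^2$-Cauchy argument works by coupling $X_t(\xi_0)$ and $I_t^\eps$ through their shared marginal $\rho_t$ (or, more directly, by enforcing $X_1(\xi_0)=\eta$ on the ambient probability space where $\eta$ already lives). Since $I_1^\eps=\eta\sim\nu$ and $\rho_t\Rightarrow\nu$ as $t\uparrow 1$, the limit has law $\nu$. The quantitative estimate \eqref{Con96} then follows from $\|I_t^\eps-\eta\|_2=\|\sigma_t(\xi_0-\eta)+\sqrt{\eps\beta_t\sigma_t}\,Z\|_2\le\sigma_t\|\xi_0-\eta\|_2+\sqrt{d\eps\beta_t\sigma_t}$, absorbing $\sqrt{\beta_t}\le 1\le 2$ into the stated constant.

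The principal obstacle will be the identification in (iii): expressing $I_t^\eps$ genuinely as a sum of two independent diffusions so that the sum-of-diffusions Fokker--Planck theorem applies cleanly, and then algebraically reconciling the resulting drift with the specific closed form \eqref{DD108}, requires careful manipulation throughout using $\beta_t=1-\sigma_t$. A secondary but nontrivial point is verifying the integrability hypothesis of the superposition principle for $b^\eps_t$ near $t=0$, where $\beta_t\sigma_t\downarrow 0$ makes the denominator degenerate; this is precisely where the assumption $|\eta|\le K$ is essential, since it keeps the Gaussian weights $\e^{-|x-\beta_t\eta-\sigma_t\xi_0|^2/(2\eps\beta_t\sigma_t)}$ under control as the variance collapses.
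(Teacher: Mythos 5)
Your plan for (i)--(iii) is essentially the paper's own route: with $|\eta|\leq K$ the drift \eqref{DD108} is Lipschitz in $x$ uniformly on $[0,1-\delta]$ (the paper's Lemma \ref{Le34} gives $|\cD_t(x)|\leq K$ and $|\nabla \cD_t(x)|\leq 2K^2/(\eps\sigma_t)$), so classical theory yields the unique strong solution, the continuous flow and the density; and the marginal identification in (iii) is obtained exactly by writing $\sigma_t\xi_0+\beta_t\eta+\sqrt{\eps\beta_t\sigma_t}\,Z$ as the sum of the independent processes $\xi_t$ (an explicit linear SDE with $a_t=\eps\beta_t/\sigma_t$) and $\beta_t\eta$, applying the sum-of-diffusions Fokker--Planck result and the superposition principle. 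One point to phrase more carefully: ``$X_t(\xi_0)$ solves the same FPE, so its law must coincide with $\rho_t$'' is not by itself a proof; the clean argument lifts the FPE solution $\rho_t$ to a martingale solution of \eqref{SDE06} via superposition and then invokes uniqueness in law of the SDE (from pathwise uniqueness), rather than uniqueness for the FPE.

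The genuine gap is in (iv). Both the existence of $X_1=\lim_{t\uparrow1}X_t$ and the bound \eqref{Con96} are statements about the \emph{pathwise coupling} of $X_t(\xi_0)$ and $X_1(\xi_0)$ on the given probability space, and they cannot be extracted from the marginal identity $\Law(X_t(\xi_0))=\Law(I^\eps_t)$: your quantitative computation bounds $\|I^\eps_t-\eta\|_2$, which only controls $\cW_2(\mu_t,\nu)$, not $\|X_t(\xi_0)-X_1(\xi_0)\|_2$. Moreover, ``enforcing $X_1(\xi_0)=\eta$'' is impossible: since $b^\eps_t$ is a deterministic function and $W$ is independent of $(\xi_0,\eta)$, the strong solution $X_t(\xi_0)$, hence any $L^2$-limit $X_1(\xi_0)$, is $\sigma(\xi_0,W)$-measurable and therefore \emph{independent} of $\eta$; it agrees with $\eta$ in law only. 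What is actually needed (and what the paper does) is a Cauchy estimate for the SDE path itself: bound $\|X_{t_1}-X_{t_0}\|_2\leq\big\|\int_{t_0}^{t_1}b_s(X_s)\,\dif s\big\|_2+\big\|\int_{t_0}^{t_1}\sqrt{\eps\beta'_s}\,\dif W_s\big\|_2$, control the drift term by inserting the explicit marginal density from (iii), Jensen's inequality and a Gaussian change of variables to get $\int_{t_0}^{t_1}|\sigma'_s|\,\big\|\sqrt{\eps\beta_s/\sigma_s}\,\xi+\xi_0-\eta\big\|_2\,\dif s$, and treat the noise term by the It\^o isometry; this yields \eqref{Con01}, and letting $t_1\uparrow1$ gives both the existence of $X_1$ and \eqref{Con96} (the factor $2$ there comes from adding the drift and noise contributions, not from absorbing $\sqrt{\beta_t}\leq1\leq2$). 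Once the $L^2$-limit exists, your identification $X_1(\xi_0)\sim\nu$ via $\cW_2(\mu_t,\nu)\to0$ is correct.
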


\begin{remark}
Note that $\rho_t(x)=\mE p_t(\xi_0,x)$. This together with formula \eqref{Den1} suggests that
$$
p_t(x_0, x) = \mathbb{E} \e^{-|x - \beta_t\eta - \sigma_tx_0|^2/(2\eps \beta_t\sigma_t)} / (2\pi\eps \beta_t\sigma_t)^{d/2}.
$$
However, this is not true because $b_t(x)$ depends on the entire distribution $\mu$ of $\xi_0$.
\end{remark}
\begin{remark}
We consider the case $d=1$ with an initial condition $\xi_0=0$. We also choose 
$\sigma_t=\cos^2(\pi t/2)$ and $\beta_t=\sin^2(\pi t/2)$. Suppose that $\nu(\dif x) = \rho_1(x) \dif x$, where $\rho_1$ is the same as in \eqref{Rho1}.  
Using the first-order expression \eqref{AA3} with $\gamma=0$ for $b^\eps_t(x)$ and the Euler scheme, we simulate 5000 sample paths 
by setting $\eps=0.5$ and time steps $500$, and present the results as follows:  
\begin{itemize}  
    \item The left-hand plot shows 5000 simulated sample paths.  
    \item The right-hand plot depicts the kernel density estimate of the 5000 terminal points.  
\end{itemize}  
From the figure, we observe that the distribution of $X_1(0)$ closely approximates $\nu$.  
\begin{center}
\begin{minipage}{\textwidth}
\centering
\includegraphics[width=2.5in, height=1.7in]{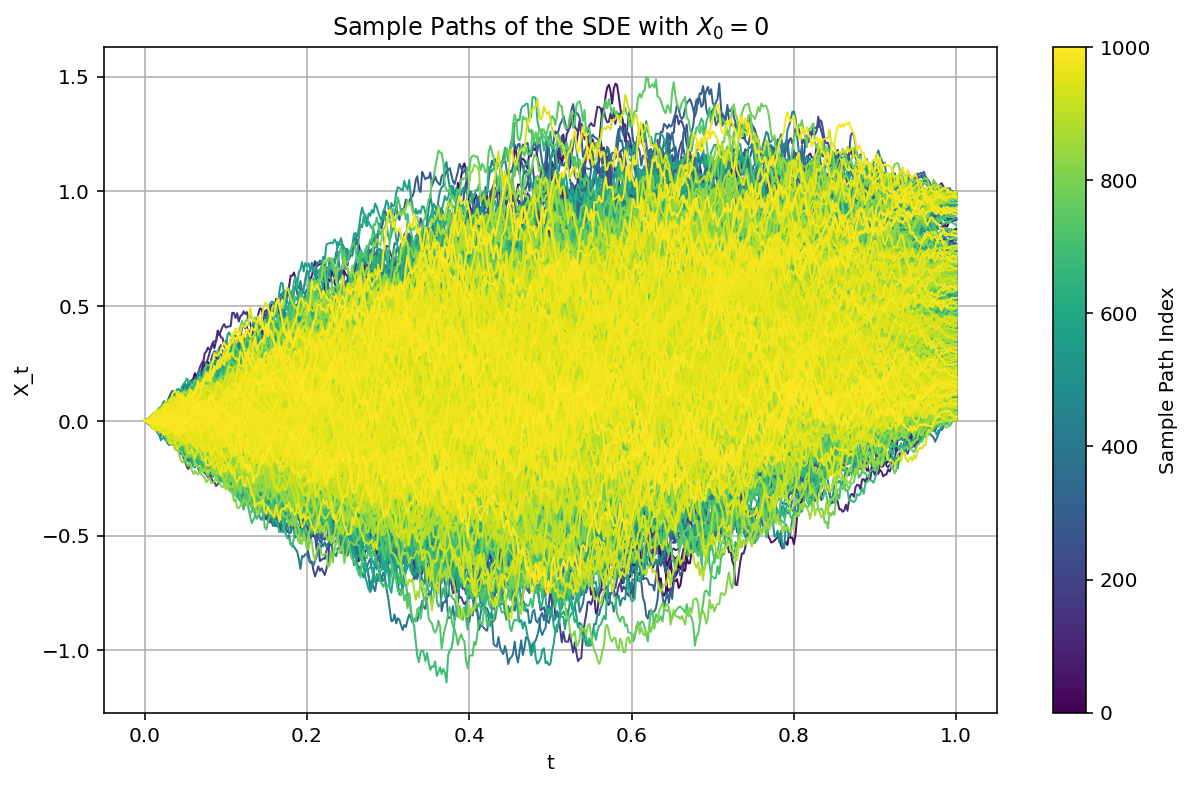}\,\,\,
\includegraphics[width=2.5in, height=1.7in]{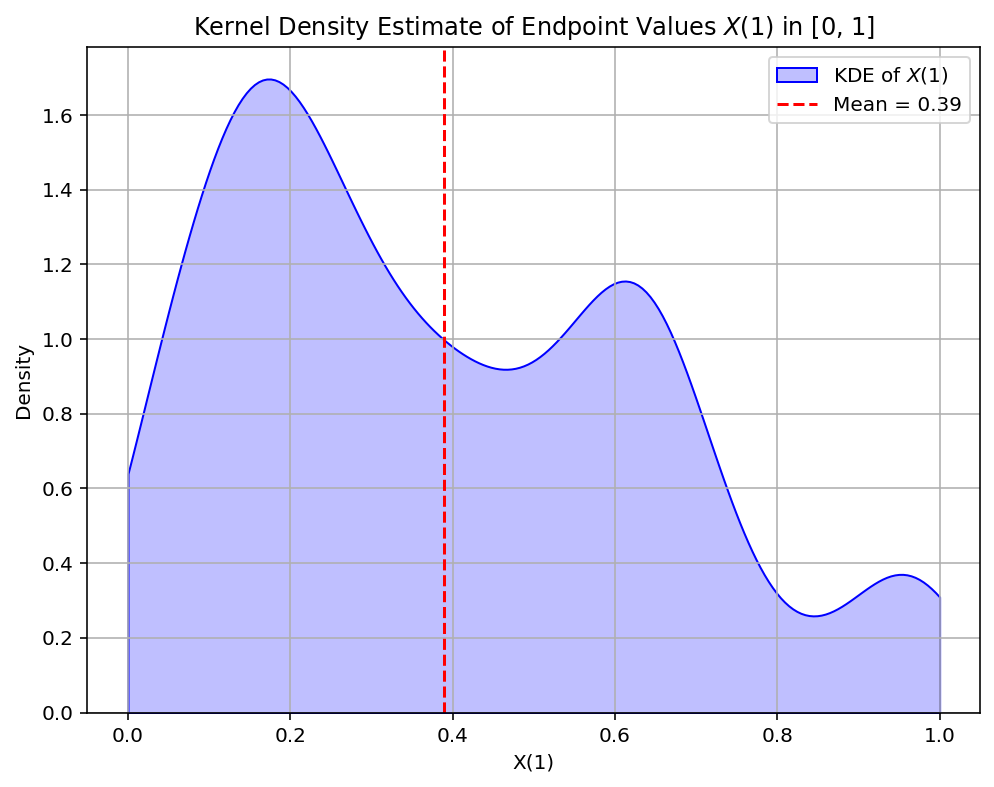}
\captionof{figure}{Stochastic transport map}
\end{minipage}
\end{center}
\end{remark}

\subsection{Related Results and Our Contributions}
We begin by recalling the construction of Moser's mapping (see \cite[Chapter 1, Appendix]{Villani2009}). 
Let $\mu(\dif x) = \rho_0(x) \e^{-V(x)} \dif x$ and $\nu(\dif x) = \rho_1(x)\e^{-V(x)} \dif x$, 
where $V(x) \in C^1(\mathbb{R}^d)$, and $\rho_0, \rho_1$ are two positive, locally Lipschitz functions bounded from below. 
Consider the following elliptic equation:
$$
\Delta u - \nabla V \cdot \nabla u = \rho_0 - \rho_1.
$$
It is well-known that there exists a solution $u \in C^{1,1}_{\text{loc}}$ to the above equation. Define
$$
b(t, x) := \frac{\nabla u(x)}{(1 - t) \rho_0 + t \rho_1(x)},\ \ t\in(0,1).
$$
Let $X_t(x)$ be the ODE flow defined by the vector field $b$. It is easy to see that $\mu_t=(X_t)_\# \mu$ satisfies
$$
\p_t\mu_t=\nabla\cdot (\mu_t X_t)=(\rho_1-\rho_0)\e^{-V}=\nu-\mu\Rightarrow\mu_t = (1 - t) \mu + t \nu.
$$
In particular, $(X_1)_\# \mu = \nu$. Although the construction of this transport map is straightforward, solving the PDE remains challenging.

In a recent work \cite{ABE23}, Albergo, Boffi, and Vanden-Eijnden introduced a general framework for generative models: 
stochastic interpolants. Let $x_0 \sim \rho_0(x) \dif x$, $x_1 \sim \rho_1(x) \dif x$, and $z \sim N(0, \mathbb{I}_d)$ be independent random variables, 
and let $I \in C^2([0, 1]; C^2(\mathbb{R}^d \times \mathbb{R}^d)^d)$ satisfy the conditions
$$
I(0, x_0, x_1) = x_0, \quad I(1, x_0, x_1) = x_1, \quad |\partial_t I(t, x_0, x_1)| \leq C_1 |x_0 - x_1|.
$$
Consider the following interpolant process:
$$
x_t := I(t, x_0, x_1) + \gamma_t z,
$$
where $\gamma: [0, 1] \to \mathbb{R}$ satisfies $\gamma^2 \in C^2([0, 1])$ and $\gamma_0 = \gamma_1 = 0$. 
By calculating the characteristic function, the authors demonstrated that the law $\mu_t$ of $x_t$ has a density $\rho_t$, 
which solves the following transport equation:
$$
\partial_t \rho + \nabla \cdot (b \rho) = 0,
$$
where $b_t(x) := \mathbb{E}\left[\dot{x}_t \mid x_t = x\right]$ is the unique minimizer of the functional
on $C^0([0, 1]; C^1(\mathbb{R}^d)^d)$:
$$
\sL(\hat{b}) := \int^1_0 \mathbb{E}\left[\tfrac{1}{2} |\hat{b}_t(x_t)|^2 - \dot{x}_t \cdot \hat{b}_t(x_t)\right] \dif t
= \int^1_0\!\!\! \int_{\mathbb{R}^d} \left[\tfrac{1}{2} |\hat{b}_t(x)|^2 - b_t(x) \cdot \hat{b}_t(x)\right] \rho_t(x) \dif x \dif t.
$$
Based on this Fokker-Planck equation and the variational representation of $b$, 
the authors provided a unified treatment for various score-based models (see \cite[Section 5]{ABE23} for more discussions).

Compared with \cite{ABE23}, the contributions of this paper are summarized as follows:

\begin{itemize}
    \item We address more general initial and target distributions using It\^o's calculus. 
    Specifically, we derive explicit expressions \eqref{BB91} and \eqref{DD108} for the drift term $b$ in terms of $\mu$ and $\nu$. 
    Notably, although $b_t(x)$ exhibits singular behavior at the terminal time $t=1$, we demonstrate the existence of a (stochastic) transport map $X_1(\cdot)$ up to $t=1$ by leveraging the superposition principle and the special structure of the drift $b_t(x)$.

    \item Utilizing the explicit expression for $b$, we are able to design algorithms with greater precision, moving away from the black-box approach often employed in diffusion models, where the score function or drift $b$ is typically approximated using neural networks.

    \item For sampling problems, we propose three schemes for the drifts: a zero-order scheme, a first-order scheme, and a second-order scheme. 
    We establish the convergence rate of particle approximations by applying the Girsanov theorem and entropy methods (cf. \cite{JW16}). 
    Numerical experiments on high-dimensional anisotropic funnel distributions demonstrate that our schemes achieve excellent performance.
\end{itemize}

\subsection{Organization of this Paper}

This paper is organized as follows:  

In Section 2, we present a general result about the sum of two independent diffusion processes. Specifically, using Itô's formula, we show that the time-marginal distribution of the sum satisfies a Fokker-Planck equation. Furthermore, 
by applying the Ambrosio-Figalli-Trevisan superposition principle, we derive the existence of a solution to the
associated stochastic differential equations (SDEs).

In Section 3, we introduce a general framework for diffusion generative models based on both ordinary differential equations (ODEs) and SDEs. This framework can be viewed as a special case of Corollary \ref{Cor15}. In Theorem \ref{Th23}, we establish the existence of a deterministic transport map via ODEs, while in Theorem \ref{Th311}, we prove the existence of a stochastic transport map. Additionally, we provide several alternative expressions for the drift terms, which are tailored to the target distribution and are particularly useful for sampling applications.

In Section 4, we address the challenge of nonlinear drifts, where the denominators and numerators involve expectations with respect to the target distribution. We demonstrate the optimal convergence rate of the particle approximation (Monte-Carlo method) for the SDEs under the total variation metric. Moreover, when the target distribution has compact support, we establish strong convergence in $L^p$-spaces.

In Section 5, we apply our theoretical results to two concrete sampling problems: high-dimensional anisotropic funnel distributions and Gaussian mixture distributions. Numerical experiments demonstrate the effectiveness of our proposed algorithm.

Finally, in the Appendix, we provide some tools and well-known results that are used in the proofs of our main theorems of convergence of particle approximations.

\subsection{Notations}
We list some notations and definitions frequently used below.
\begin{itemize}
\item For two probability measures $\mu,\nu$ and $p\geq 1$, the $p$-Wasserstein metric is defined by
$$
\mathcal{W}_p(\mu, \nu) := \inf_{\xi \sim \mu, \eta \sim \nu} \|\xi - \eta\|_p,
$$
and if $\mu \ll \nu$, the relative entropy is defined by
\begin{align}\label{Rel1}
\cH(\mu, \nu) := \int_{\mathbb{R}^d} \log(\dif \mu / \dif \nu) \dif \mu.
\end{align}

\item For a signed measure $\mu$ over $\mR^d$, the total variation norm of $\mu$ is defined by
$$
\|\mu\|_{\rm var}:=\sup_{\|f\|_\infty\leq1}\left|\int_{\mR^d}f(x)\mu(\dif x)\right|
$$

\item Let $\fm\in\mR^d$ and $\Sigma$ be a $d\times d$-symmetric positive definite matrix. 
The $d$-dimensional Gaussian density with mean $\fm$ and covariance matrix $\Sigma$  is denoted by
\begin{align}\label{Nor1}
\sN(x|\fm,\Sigma)=\frac{\e^{-\<\Sigma^{-1}_i(x-\fm), x-\fm\>^2/2}}{\sqrt{(2\pi)^d\det(\Sigma)}}.
\end{align}
In particular, for $\sigma>0$, we also simply wrtie $\phi_\sigma(x):=\sN(x|0,\sigma\mI_d)$.
\item For $p\geq 1$ and $d\geq 1$, define a constant 
\begin{align}\label{Cons0}
\mathcal{c}_p := \left[(2\pi)^{-d/2} \int_{\mathbb{R}^d} |x|^p \e^{-|x|^2/2} \, \mathrm{d} x\right]^{1/p}.
\end{align}

\end{itemize}
\section{The Sum of Two Independent Diffusion Processes}

In this section, we present a general result concerning the Fokker-Planck equation satisfied by the sum of two independent diffusion processes, which directly follows from It\^o's formula. Additionally, we establish the existence of a weak solution associated with the Fokker-Planck equation by leveraging the Ambrosio-Figalli-Trevisan superposition principle (see \cite{BRS21}).

Let $\{(X^{(i)}_t)_{t \geq 0}, i = 1, 2\}$ be two stochastic processes that solve the following stochastic differential equations (SDEs):
\begin{align}\label{ES1}
\mathrm{d} X^{(i)}_t = b^{(i)}(t, X^{(i)}_t) \, \mathrm{d} t + \sigma^{(i)}(t, X^{(i)}_t) \, \mathrm{d} W^{(i)}_t, \quad i = 1, 2,
\end{align}
where $W^{(i)}$, $i = 1, 2$, are two independent $d$-dimensional standard Brownian motions, and the coefficients
$$
b^{(i)}(t, \omega, x): \mathbb{R}_+ \times \Omega \times \mathbb{R}^d \to \mathbb{R}^d, \quad 
\sigma^{(i)}(t, \omega, x): \mathbb{R}_+ \times \Omega \times \mathbb{R}^d \to \mathbb{R}^d \otimes \mathbb{R}^d,
$$
are random, adapted processes, and independent of each other for $i=1,2$.

Define the process $X_t$ as the sum of $X^{(1)}_t$ and $X^{(2)}_t$:
$$
X_t := X^{(1)}_t + X^{(2)}_t.
$$
For $i = 1, 2$, let $\mu^{(i)}_t$ denote the law of $X^{(i)}_t$, and let $\mu_t$ denote the law of $X_t$. It follows that
$$
\mu_t(\mathrm{d} x) = (\mu^{(1)}_t * \mu^{(2)}_t)(\mathrm{d} x) = \mathbb{E} \mu^{(1)}_t(\mathrm{d} x - X^{(2)}_t) = \mathbb{E} \mu^{(2)}_t(\mathrm{d} x - X^{(1)}_t).
$$
For simplicity, we define
$$
a^{(i)}_{jk}(t, \omega, x) := \frac{1}{2} \sum_{l=1}^d (\sigma^{(i)}_{jl} \sigma^{(i)}_{kl})(t, \omega, x), \quad i = 1, 2.
$$

We now state the following fundamental result.

\begin{theorem}
Suppose that for any $T > 0$,
\begin{align}\label{DD}
\int^T_0 \left( \mathbb{E} |a^{(i)}(t, X^{(i)}_t)| + \mathbb{E} |b^{(i)}(t, X^{(i)}_t)| \right) \mathrm{d} t < \infty, \quad i = 1, 2.
\end{align}
Then $\mu_t$ solves the following Fokker-Planck equation in the distributional sense:
\begin{align}\label{DD9}
\partial_t \mu_t = \sL^*_t \mu_t,
\end{align}
where $\sL^*_t$ is the adjoint operator of the second-order differential operator $\sL_t$ defined by
$$
\sL_t f(x) := \mathrm{tr}(a(t, x) \cdot \nabla^2 f(x)) + b(t, x) \cdot \nabla f(x),
$$
with the coefficients $a$ and $b$ being non-random and given by
\begin{align}\label{AA}
a(t, x) &:= \mathbb{E}\left[\big(a^{(1)}(t, X^{(1)}_t) + a^{(2)}(t, X^{(2)}_t)\big) \mid X_t = x\right], \\
\label{BB}
b(t, x) &:= \mathbb{E}\left[\big(b^{(1)}(t, X^{(1)}_t) + b^{(2)}(t, X^{(2)}_t)\big) \mid X_t = x\right].
\end{align}
In particular, if we consider the following SDE:
\begin{align}\label{SDE10}
\mathrm{d} X_t = b(t, X_t) \, \mathrm{d} t + \sqrt{2a(t, X_t)} \, \mathrm{d} W_t,
\end{align}
then there exists a solution to the above SDE such that for each $t \geq 0$,
$$
\mu_t = \mathbb{P} \circ X^{-1}_t.
$$
\end{theorem}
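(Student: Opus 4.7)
The plan is to first derive the Fokker-Planck equation \eqref{DD9} via It\^o's formula applied to the sum, and then invoke the Ambrosio-Figalli-Trevisan superposition principle to obtain a weak solution of the SDE \eqref{SDE10} with the prescribed marginals. Starting from $X_t=X^{(1)}_t+X^{(2)}_t$, I would apply It\^o's formula to $f(X_t)$ for arbitrary $f\in C_c^\infty(\mR^d)$. The crucial simplification is that $W^{(1)}$ and $W^{(2)}$ are independent, so the mixed quadratic-covariation vanishes and the two diffusion matrices add cleanly:
\begin{align*}
\dif f(X_t)=\nabla f(X_t)\cdot\bigl(b^{(1)}(t,X^{(1)}_t)+b^{(2)}(t,X^{(2)}_t)\bigr)\dif t +\tr\!\bigl[(a^{(1)}(t,X^{(1)}_t)+a^{(2)}(t,X^{(2)}_t))\nabla^2 f(X_t)\bigr]\dif t + \dif M_t,
\end{align*}
where $M_t$ is a local martingale arising from the two stochastic integrals. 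Since $f\in C_c^\infty$ has bounded derivatives, assumption \eqref{DD} justifies a routine localization that upgrades the local martingale to a genuine one once expectations are taken.

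Next, taking expectations and conditioning the drift and second-order terms on the event $\{X_t=x\}$ via the tower property, together with the definitions \eqref{AA}--\eqref{BB} of $a$ and $b$, yields
\begin{align*}
\int_{\mR^d} f\,\dif\mu_t - \int_{\mR^d} f\,\dif\mu_0 = \int_0^t\!\int_{\mR^d} \sL_s f(x)\,\mu_s(\dif x)\,\dif s,\qquad \forall f\in C_c^\infty(\mR^d),
\end{align*}
which is exactly the distributional form of $\partial_t\mu_t=\sL_t^*\mu_t$. This settles the first part of the theorem.

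For the second assertion, I would invoke the Ambrosio-Figalli-Trevisan superposition principle as stated in \cite{BRS21}: any narrowly continuous curve of probability measures solving a Fokker-Planck equation whose coefficients satisfy the integrability bound $\int_0^T\!\int(|a|+|b|)\,\dif\mu_t\,\dif t<\infty$ is realized as the time-marginal law of some weak solution to the associated SDE. Narrow continuity of $t\mapsto\mu_t$ is inherited from the path continuity of $X_t$, and the integrability requirement follows from Jensen's inequality for conditional expectation applied to \eqref{AA}--\eqref{BB},
\begin{align*}
\int_{\mR^d}|b(t,x)|\,\mu_t(\dif x)\leq \mE|b^{(1)}(t,X^{(1)}_t)|+\mE|b^{(2)}(t,X^{(2)}_t)|,
\end{align*}
together with the analogous estimate for $a$ and hypothesis \eqref{DD}. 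Since each $a^{(i)}(t,\cdot)$ is positive semidefinite by its definition as $\tfrac12\sigma^{(i)}(\sigma^{(i)})^\top$, so is the conditional average $a(t,\cdot)$, hence $\sqrt{2a(t,x)}$ admits a Borel measurable matrix square root rendering \eqref{SDE10} meaningful.

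The main obstacle I anticipate is not a single deep step but rather the careful bookkeeping needed to match the precise hypotheses of the superposition principle---in particular Borel measurability of the implicitly defined conditional-expectation coefficients $a,b$ and their integrability against the marginals $\mu_t$. Once conditional Jensen reduces this verification to \eqref{DD}, the existence of a weak solution to \eqref{SDE10} is a direct black-box application of \cite{BRS21}; note that no uniqueness in law is being claimed at this level of generality, only representation of the given marginal family as the law of a diffusion.
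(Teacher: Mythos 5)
Your proposal is correct and follows essentially the same route as the paper: Itô's formula applied to $f(X^{(1)}_t+X^{(2)}_t)$ with independence killing the cross-variation, identification of the drift and diffusion via the conditional-expectation definitions \eqref{AA}--\eqref{BB} (tower property), the integrability bound $\int_0^T\int(|a|+|b|)\,\dif\mu_t\,\dif t<\infty$ deduced from \eqref{DD} by conditional Jensen, and then the superposition principle of \cite{BRS21} (equivalently \cite{Tr13}) to produce a weak solution of \eqref{SDE10} with marginals $\mu_t$. The extra remarks on localization, measurability of the conditional-expectation coefficients, and positive semidefiniteness of $a$ are harmless refinements of the same argument.
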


\begin{proof}
Let $f \in C^2_b(\mathbb{R}^d)$. By Itô's formula, we have
\begin{align}\label{IT1}
\mathbb{E} f(X_t) = \mathbb{E} f(X_0) + \int^t_0 \mathbb{E}\left[\sL^{(1)}_{s} f(\cdot + X^{(2)}_s)(X^{(1)}_s) + \sL^{(2)}_{s} f(\cdot + X^{(1)}_s)(X^{(2)}_s)\right] \mathrm{d} s,
\end{align}
where for $i = 1, 2$,
$$
\sL^{(i)}_s f(x) = \mathrm{tr}(a^{(i)}(s, \omega, x) \cdot \nabla^2 f(x)) + b^{(i)}(s, \omega, x) \cdot \nabla f(x).
$$
Since $X^{(1)}$ and $X^{(2)}$ are independent, by Fubini's theorem, we have
\begin{align*}
&\mathbb{E}\left[\sL^{(1)}_{s} f(\cdot + X^{(2)}_s)(X^{(1)}_s) + \sL^{(2)}_{s} f(\cdot + X^{(1)}_s)(X^{(2)}_s)\right] \\
&\quad = \mathbb{E}\left[\mathrm{tr}\left((a^{(1)}(s, X^{(1)}_s) + a^{(2)}(s, X^{(2)}_s)) \cdot \nabla^2 f(X_s)\right)\right] \\
&\qquad + \mathbb{E}\left[(b^{(1)}(s, X^{(1)}_s) + b^{(2)}(s, X^{(2)}_s)) \cdot \nabla f(X_s)\right] \\
&\quad = \mathbb{E}\left[\mathrm{tr}(a(s, X_s) \cdot \nabla^2 f(X_s))\right] + \mathbb{E}\left[b(s, X_s) \cdot \nabla f(X_s)\right],
\end{align*}
where $a$ and $b$ are defined by \eqref{AA} and \eqref{BB}, respectively. Substituting this into \eqref{IT1}, we obtain
$$
\int_{\mathbb{R}^d} f(x) \mu_t(\mathrm{d} x) = \int_{\mathbb{R}^d} f(x) \mu(\mathrm{d} x) + \int^t_0 \int_{\mathbb{R}^d} \left[\mathrm{tr}(a(s, x) \cdot \nabla^2 f(x)) + b(s, x) \cdot \nabla f(x)\right] \mu_s(\mathrm{d} x) \mathrm{d} s.
$$
From this, we derive the desired equation \eqref{DD9}.

On the other hand, by \eqref{AA}, \eqref{BB}, and \eqref{DD}, it is easy to see that
$$
\int^T_0 \int_{\mathbb{R}^d} \left(|a(t, x)| + |b(t, x)|\right) \mu_t(\mathrm{d} x) \mathrm{d} t \leq \sum_{i=1,2} \int^T_0 \left(\mathbb{E} |a^{(i)}(t, X^{(i)}_t)| + \mathbb{E} |b^{(i)}(t, X^{(i)}_t)|\right) \mathrm{d} t < \infty.
$$
By the superposition principle for SDEs (cf. \cite{Fi08}, \cite[Theorem 2.5]{Tr13}, or \cite{BRS21}), 
there exists a weak solution to the SDE \eqref{SDE10}.
\end{proof}

In general, it is not known whether the SDE \eqref{SDE10} admits a unique solution even if the SDE \eqref{ES1} admits a unique solution. This is closely related to the regularity of the conditional density $a(t, x)$ and $b(t, x)$. However, we have the following result, which is a direct application of \cite{XXZZ20}.

\begin{theorem}
Let $a$ and $b$ be defined by \eqref{AA} and \eqref{BB}, respectively. Suppose that $a(t, x) = a(t)$ does not depend on $x$, and for some $\kappa_0, \kappa_1, \kappa_2 > 0$,
\begin{align}\label{DD08}
\kappa_0 \mathbb{I} \leq a(t) \leq \kappa_1 \mathbb{I}, \quad |b^{(1)}(t, X^{(1)}_t) + b^{(2)}(t, X^{(2)}_t)| \leq \kappa_2.
\end{align}
Then for each $x \in \mathbb{R}^d$, the SDE
$$
\mathrm{d} X_t = b(t, X_t) \, \mathrm{d} t + \sqrt{2a(t)} \, \mathrm{d} W_t, \quad X_0 = x,
$$
has a unique solution $X_t(x) = X_t$ such that $x \mapsto X_t(x)$ forms a homeomorphism, and
\begin{enumerate}[(i)]
\item for each $T > 0$, it holds that
$$
\sup_{x \in \mathbb{R}^d} \mathbb{E}\left(\sup_{t \in [0, T]} |\nabla X_t(x)|^p\right) < \infty;
$$

\item for each $\xi \sim \mu_0$ independent of $W_\cdot$, it holds that for each $t > 0$,
\begin{align}\label{DD7}
X_t(\xi) \sim \mu_t.
\end{align}
\end{enumerate}
\end{theorem}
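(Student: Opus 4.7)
\medskip

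\noindent\textbf{Proof proposal.} The plan is to reduce the statement to a direct application of the strong well-posedness theory for SDEs with bounded measurable drift and uniformly elliptic, spatially-constant (but time-dependent) diffusion coefficient developed in \cite{XXZZ20}, and then to identify the one-dimensional marginal of the constructed flow with $\mu_t$ by invoking the Fokker--Planck equation \eqref{DD9} together with its uniqueness.

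First I would verify that the drift $b(t,x)$ inherits boundedness from the second hypothesis in \eqref{DD08}. Since $b(t,x)$ is defined in \eqref{BB} as a conditional expectation of $b^{(1)}(t,X^{(1)}_t)+b^{(2)}(t,X^{(2)}_t)$, Jensen's inequality for conditional expectations gives, for $\mu_t$-almost every $x$,
$$
|b(t,x)|=\bigl|\mathbb{E}\bigl[b^{(1)}(t,X^{(1)}_t)+b^{(2)}(t,X^{(2)}_t)\,\big|\,X_t=x\bigr]\bigr|\leq \kappa_2.
$$
Together with the two-sided bound $\kappa_0\mI\leq a(t)\leq \kappa_1\mI$, the SDE \eqref{SDE10} therefore falls in the framework of a bounded measurable drift and a uniformly elliptic spatially-constant diffusion. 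Invoking the main result of \cite{XXZZ20} (a Zvonkin-type transformation combined with Krylov-style $L^p$ estimates for the associated parabolic PDE) yields, for every starting point $x$, a unique strong solution $X_t(x)$ such that the map $x\mapsto X_t(x)$ extends to a homeomorphism of $\mR^d$ and admits Sobolev derivatives $\nabla X_t(x)$ satisfying the moment estimate in (i). This settles strong well-posedness, the homeomorphism property, and part (i).

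For part (ii), let $\xi\sim\mu_0$ be independent of $W$ and let $\tilde\mu_t$ denote the law of $X_t(\xi)$. By It\^o's formula applied to $f\in C^2_b$, $\tilde\mu_t$ is a distributional solution of the Fokker--Planck equation
$$
\p_t\tilde\mu_t=\sL^*_t\tilde\mu_t,\qquad \tilde\mu_0=\mu_0.
$$
On the other hand, the preceding theorem shows that the law $\mu_t$ of $X^{(1)}_t+X^{(2)}_t$ also solves the same equation with the same initial datum. Thanks to the uniform ellipticity of $a(t)$ and the boundedness of $b$, the Fokker--Planck equation (equivalently, the martingale problem) admits a unique distributional solution in the class of measure-valued curves with finite entropy/moment, so $\tilde\mu_t=\mu_t$ for all $t\geq 0$, which is precisely \eqref{DD7}.

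The main obstacle I anticipate is \emph{not} the identification of the marginal, but the verification that the hypotheses of \cite{XXZZ20} are actually met by our drift $b$, which a priori is only bounded and measurable (the conditional expectation provides no continuity). The strong existence-uniqueness of solutions for additive noise with merely bounded measurable drift goes back to Veretennikov, but the Sobolev differentiability of the flow and the $L^p$-estimate on $\nabla X_t(x)$ in (i) are more delicate, and rely on the PDE a-priori estimates available only because $a$ depends on $t$ alone. Any strengthening of the statement to the case $a=a(t,x)$ would break this reduction and force a substantially more technical argument.
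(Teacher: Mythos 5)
Your proposal is correct and follows essentially the same route as the paper: boundedness of $b$ via the conditional-expectation (Jensen) bound from \eqref{DD08}, an appeal to \cite[Theorem 1.1]{XXZZ20} for strong well-posedness, the homeomorphism property and the gradient moment estimate in (i), and then It\^o's formula plus uniqueness of solutions to the Fokker--Planck equation \eqref{DD9} under \eqref{DD08} to identify the law of $X_t(\xi)$ with $\mu_t$ for (ii).
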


\begin{proof}
Under the assumption \eqref{DD08}, we have $|b(t, x)| \leq \kappa_2$. Below we only show (ii) since the other parts follow directly from \cite[Theorem 1.1]{XXZZ20}. Note that $Y_t := X_t(\xi)$ solves the following SDE:
$$
\mathrm{d} Y_t = b(t, Y_t) \, \mathrm{d} t + \sqrt{2a(t)} \, \mathrm{d} W_t, \quad Y_0 = \xi.
$$
By Itô's formula, the law of $Y_t$ solves the Fokker-Planck equation \eqref{DD9}. Moreover, under \eqref{DD08}, it is well known that \eqref{DD9} has a unique solution starting from $\mu_0$ (see \cite{XXZZ20}). Thus, we obtain \eqref{DD7}.
\end{proof}

The following corollary is highly useful and forms the foundation for all subsequent analysis.

\begin{corollary}\label{Cor15}
Suppose that $\mu^{(1)}_t(\mathrm{d} x) = \rho(t, x) \, \mathrm{d} x$ and that $a^{(1)}$ and $b^{(1)}$ are non-random. Then
$$
\mu_t(\mathrm{d} x) / \mathrm{d} x = \mathbb{E} \rho(t, x - X^{(2)}_t) =: \varphi_t(x),
$$
and
$$
\partial_t \varphi_t = \partial^2_{jk} (a_{jk} \varphi_t) - \partial_j (b_j \varphi_t),
$$
where we have used the Einstein convention: if an index appears twice in a product, it is summed automatically. Here,
\begin{align}\label{AA1}
a(t, x) &= \frac{\mathbb{E}\left[(a^{(1)}(t, x - X^{(2)}_t) + a^{(2)}(t, X^{(2)}_t)) \rho(t, x - X^{(2)}_t)\right]}{\mathbb{E} \rho(t, x - X^{(2)}_t)}, \\
\label{BB1}
b(t, x) &= \frac{\mathbb{E}\left[(b^{(1)}(t, x - X^{(2)}_t) + b^{(2)}(t, X^{(2)}_t)) \rho(t, x - X^{(2)}_t)\right]}{\mathbb{E} \rho(t, x - X^{(2)}_t)}.
\end{align}
\end{corollary}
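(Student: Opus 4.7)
The plan is to directly apply the preceding Fokker--Planck theorem (equation \eqref{DD9}) and then compute the conditional expectations in \eqref{AA}--\eqref{BB} explicitly, exploiting the independence of $X^{(1)}$ and $X^{(2)}$ together with the assumption that $X^{(1)}_t$ admits a density $\rho(t,\cdot)$. First, since the two processes are independent, the law of $X_t=X^{(1)}_t+X^{(2)}_t$ is the convolution $\mu^{(1)}_t*\mu^{(2)}_t$, so conditioning on $X^{(2)}_t$ and using that $X^{(1)}_t$ has density $\rho(t,\cdot)$ immediately gives the density identity
\[
\varphi_t(x)=\int_{\mR^d}\rho(t,x-y)\,\mu^{(2)}_t(\dif y)=\mE\,\rho(t,x-X^{(2)}_t),
\]
which is the first claim.

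Next I would compute the conditional expectations appearing in \eqref{AA} and \eqref{BB}. The joint law of $(X^{(1)}_t,X^{(2)}_t)$ is the product $\rho(t,x_1)\dif x_1\otimes\mu^{(2)}_t(\dif x_2)$, so by the change of variables $(x_1,x_2)\mapsto(x_1+x_2,x_2)$ the conditional law of $X^{(2)}_t$ given $X_t=x$ has density (with respect to $\mu^{(2)}_t$) proportional to $\rho(t,x-x_2)$, with normalizing constant $\varphi_t(x)$. Because $a^{(1)},b^{(1)}$ are non-random and $X^{(1)}_t=X_t-X^{(2)}_t$, this yields for any bounded measurable $g$
\[
\mE\bigl[g(X^{(1)}_t,X^{(2)}_t)\,\big|\,X_t=x\bigr]
=\frac{\mE\bigl[g(x-X^{(2)}_t,X^{(2)}_t)\,\rho(t,x-X^{(2)}_t)\bigr]}{\mE\,\rho(t,x-X^{(2)}_t)},
\]
and specializing $g$ to $a^{(1)}(t,\cdot)+a^{(2)}(t,\cdot)$ and $b^{(1)}(t,\cdot)+b^{(2)}(t,\cdot)$ produces the formulas \eqref{AA1} and \eqref{BB1}. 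The integrability condition \eqref{DD} guarantees that the denominators are finite and the conditional expectations are well defined $\mu_t$-a.e.

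Finally, to upgrade the distributional Fokker--Planck equation $\p_t\mu_t=\sL^*_t\mu_t$ from the previous theorem to the strong form $\p_t\varphi_t=\p^2_{jk}(a_{jk}\varphi_t)-\p_j(b_j\varphi_t)$, I would pair both sides with an arbitrary test function $f\in C^\infty_c(\mR^d)$; since $\mu_t(\dif x)=\varphi_t(x)\dif x$, the adjoint identity
\[
\int_{\mR^d}\sL_tf\cdot\varphi_t\dif x=\int_{\mR^d}f\cdot\bigl[\p^2_{jk}(a_{jk}\varphi_t)-\p_j(b_j\varphi_t)\bigr]\dif x
\]
follows from integration by parts and yields the claim in the distributional sense. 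The only subtle point, which I expect to be the main obstacle, is justifying this integration by parts rigorously: the coefficients $a(t,x)$ and $b(t,x)$ are defined only as ratios of expectations and need not be smooth, so the rearrangement must be understood weakly (i.e.\ the PDE holds in $\sD'$), with the products $a_{jk}\varphi_t$ and $b_j\varphi_t$ interpreted as locally integrable functions under \eqref{DD}. Once this is done, no further computation is required and the corollary follows.
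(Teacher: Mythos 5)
Your proposal is correct and follows essentially the same route as the paper: identify $\varphi_t$ as the convolution density, then compute the conditional expectations in \eqref{AA}--\eqref{BB} as ratios weighted by $\rho(t,x-X^{(2)}_t)$ (the paper does this by testing against bounded functions of $X_t$, which is the same computation as your disintegration argument), and finally plug into the Fokker--Planck equation of the preceding theorem. Your concern about justifying integration by parts is not an issue, since the equation $\partial_t\varphi_t=\partial^2_{jk}(a_{jk}\varphi_t)-\partial_j(b_j\varphi_t)$ is simply \eqref{DD9} rewritten with $\mu_t=\varphi_t\,\dif x$ and is understood distributionally from the start, exactly as you note.
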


\begin{proof}
For any bounded measurable function $f$, by the change of variable, we have
\begin{align*}
\mathbb{E}\left[a^{(2)}(t, X^{(2)}_t) f(X_t)\right] &= \mathbb{E}\left[a^{(2)}(t, X^{(2)}_t) f(X^{(1)}_t + X^{(2)}_t)\right] \\
&= \int_{\mathbb{R}^d} \mathbb{E}\left[a^{(2)}(t, X^{(2)}_t) f(x + X^{(2)}_t)\right] \rho(t, x) \, \mathrm{d} x \\
&= \int_{\mathbb{R}^d} f(x) \mathbb{E}\left[a^{(2)}(t, X^{(2)}_t) \rho(t, x - X^{(2)}_t)\right] \, \mathrm{d} x,
\end{align*}
and since $a^{(1)}$ is non-random,
\begin{align*}
\mathbb{E}\left[a^{(1)}(t, X^{(1)}_t) f(X_t)\right] &= \mathbb{E}\left[a^{(1)}(t, X^{(1)}_t) f(X^{(1)}_t + X^{(2)}_t)\right] \\
&= \int_{\mathbb{R}^d} \mathbb{E}\left[a^{(1)}(t, x) f(x + X^{(2)}_t)\right] \rho(t, x) \, \mathrm{d} x \\
&= \int_{\mathbb{R}^d} f(x) \mathbb{E}\left[a^{(1)}(t, x - X^{(2)}_t) \rho(t, x - X^{(2)}_t)\right] \, \mathrm{d} x.
\end{align*}
Since $X_t$ has density $\mathbb{E} \rho(t, x - X^{(2)}_t)$, from the above two equalities, it is easy to see that
$$
\mathbb{E}\left[a^{(2)}(t, X^{(2)}_t) \mid X_t = x\right] = \frac{\mathbb{E}\left[a^{(2)}(t, X^{(2)}_t) \rho(t, x - X^{(2)}_t)\right]}{\mathbb{E} \rho(t, x - X^{(2)}_t)},
$$
and
$$
\mathbb{E}\left[a^{(1)}(t, X^{(1)}_t) \mid X_t = x\right] = \frac{\mathbb{E}\left[a^{(1)}(t, x - X^{(2)}_t) \rho(t, x - X^{(2)}_t)\right]}{\mathbb{E} \rho(t, x - X^{(2)}_t)}.
$$
This gives \eqref{AA1} by \eqref{AA}. The expression \eqref{BB1} follows similarly.
\end{proof}

\section{Diffusion Generative Models: ODE and SDE Approaches}

In this section, we present a general framework for various diffusion generative models based on the previous general results about stochastic differential equations (SDEs). Compared with most well-known approaches \cite{SD21, ABE23}, 
our method is more direct and explicit.

Let $\nu \in \mathcal{P}(\mathbb{R}^d)$ be the target distribution that needs to be sampled or generated from the data. Let $\mu \in \mathcal{P}(\mathbb{R}^d)$ be some initial distribution that is easy to sample. We shall construct a dynamical process that evolves $\mu$ to $\nu$ over the time interval $[0, 1]$.

Consider a special case studied in Corollary \ref{Cor15}:
$$
X_t = X^{(1)}_t + X^{(2)}_t = \xi_t + \eta_t,
$$
where $\eta_t: [0, 1] \to \mathbb{R}^d$ is a random $C^1$-function independent of $W$, and $\xi_t$ solves the linear SDE:
$$
\mathrm{d} \xi_t = (\log \sigma_t)' \xi_t \, \mathrm{d} t + \sigma_t \sqrt{a'_t} \, \mathrm{d} W_t,
$$
where $\xi_0$ is independent of $(\eta, W)$, and $\sigma_t, a_t: [0, 1) \to [0, \infty)$ are two $C^1$-functions satisfying
\begin{equation} \label{Con2}
\boxed{
\sigma_0 = 1, \quad \sigma_1 = 0, \quad a_0 = 0, \quad a'_t \geq 0.
}
\end{equation}
It is well-known that $\xi_t$ has the explicit expression:
\begin{equation} \label{GG41}
\xi_t =\e^{\int^t_0 (\log \sigma_r)' \, \mathrm{d} r} \xi_0 + \int^t_0\e^{\int^t_s (\log \sigma_r)' \, \mathrm{d} r} \sigma_s \sqrt{a'_s} \, \mathrm{d} W_s = \sigma_t \xi_0 + \bar{\xi}_t,
\end{equation}
where $\bar{\xi}_t$ is Gaussian and has the covariance matrix (possibly degenerate, e.g. $a_t=0$)
\begin{equation} \label{GG1}
\Theta_t = \mathbb{E}\left(\bar{\xi}_t \bar{\xi}_t^*\right) = \sigma_t^2 (a_t - a_0) \mathbb{I}_d = \sigma_t^2 a_t \mathbb{I}_d,
\end{equation}
where $\mathbb{I}_d$ is the $d \times d$ identity matrix. Below, we always assume that
\begin{equation} \label{AS1}
\boxed{
X_0 \sim \mu, \quad X_1 \sim \nu, \quad \text{Law of } \xi_t = \rho_t(x) \, \mathrm{d} x, \quad \forall t \in (0, 1).
}
\end{equation}

The following result is a direct consequence of Corollary \ref{Cor15}.

\begin{theorem}\label{Th21}
Suppose that \eqref{Con2}, \eqref{AS1}, and
$$
\int^T_0 \mathbb{E}|\eta'_t - (\log \sigma_t)' \eta_t| \, \mathrm{d} t < \infty
$$
hold for each $T < 1$. Then, for each $t \in (0, 1)$, $X_t$ admits a density $\varphi_t(x) = \mathbb{E} \rho_t(x - \eta_t)$, which satisfies the following Fokker-Planck equation:
\begin{equation} \label{PDE1}
\partial_t \varphi_t = \frac{a'_t \sigma_t^2}{2} \Delta \varphi_t - \nabla \cdot (b \varphi_t),
\end{equation}
where
$$
b(t, x) := (\log \sigma_t)' x + \frac{\mathbb{E}[(\eta'_t - (\log \sigma_t)' \eta_t) \rho_t(x - \eta_t)]}{\mathbb{E} \rho_t(x - \eta_t)}.
$$
Moreover, for $p \geq 1$, it holds that for $\mu_t(\mathrm{d} x) = \varphi_t(x) \, \mathrm{d} x$,
\begin{equation} \label{WW1}
\mathcal{W}_p(\mu_t, \nu) \leq \sigma_t \left(\|\xi_0\|_p + \mathcal{c}_p \sqrt{a_t}\right) + \|\eta_t - \eta_1\|_p,
\end{equation}
where $\mathcal{W}_p(\mu, \nu)$ is the Wasserstein metric of $p$-order, and $\mathcal{c}_p$ is defined in \eqref{Cons0}.
\end{theorem}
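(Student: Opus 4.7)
The plan is to apply Corollary \ref{Cor15} directly with the splitting $X_t = X^{(1)}_t + X^{(2)}_t$ where $X^{(1)}_t = \xi_t$ and $X^{(2)}_t = \eta_t$, and then obtain the Wasserstein estimate \eqref{WW1} by constructing an explicit coupling through these same processes.

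First I would read off the coefficients. From the linear SDE for $\xi_t$ the drift $b^{(1)}(t,x) = (\log\sigma_t)'\, x$ and the diffusion $a^{(1)}(t) = \tfrac12\sigma_t^2 a'_t\,\mathbb{I}_d$ are both non-random, matching the hypothesis of Corollary \ref{Cor15}. The process $\eta_t$, being a random $C^1$-curve independent of $W$, fits into the SDE framework of Section 2 by taking $b^{(2)}(t,\omega) = \eta'_t(\omega)$ and $a^{(2)}\equiv 0$ (with any auxiliary independent Brownian motion as a formal driver). The integrability \eqref{DD} on each $[0,T]$ with $T<1$ is inherited from the finite Gaussian moments of $\xi_t$ in \eqref{GG41} together with the assumed $L^1$-bound on $\eta'_t - (\log\sigma_t)'\eta_t$ and on $\eta_t$ itself.

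Next, since $\xi_t$ has density $\rho_t$ by \eqref{AS1}, Corollary \ref{Cor15} immediately yields the density formula $\varphi_t(x) = \mathbb{E}\rho_t(x - \eta_t)$ and the representations \eqref{AA1}--\eqref{BB1}. Because $a^{(1)}$ does not depend on $x$ and $a^{(2)}=0$, formula \eqref{AA1} collapses to $a(t,x) = \tfrac12\sigma_t^2 a'_t\,\mathbb{I}_d$, producing the Laplacian in \eqref{PDE1}. Substituting $b^{(1)}(t,x-\eta_t) = (\log\sigma_t)'(x-\eta_t)$ and $b^{(2)}(t)=\eta'_t$ into \eqref{BB1} and pulling the term $(\log\sigma_t)'\,x$ (constant relative to the conditional expectation) out of the ratio produces exactly the claimed expression for $b(t,x)$, completing \eqref{PDE1}.

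For \eqref{WW1} I would use the natural coupling $(X_t, X_1)$ built from the same underlying processes. By \eqref{Con2} and \eqref{GG41}, $\sigma_1=0$ kills the deterministic part $\sigma_1\xi_0$ and the Gaussian $\bar\xi_1$ has covariance $\sigma_1^2 a_1\mathbb{I}_d = 0$, so $\xi_1 \equiv 0$ almost surely and $X_1 = \eta_1 \sim \nu$. Hence $\mathcal{W}_p(\mu_t,\nu) \le \|X_t - X_1\|_p \le \|\xi_t\|_p + \|\eta_t - \eta_1\|_p$. Writing $\xi_t = \sigma_t\xi_0 + \sigma_t\sqrt{a_t}\,Z$ with $Z\sim N(0,\mathbb{I}_d)$, the triangle inequality together with the definition \eqref{Cons0} of $\mathcal{c}_p$ gives $\|\xi_t\|_p \le \sigma_t(\|\xi_0\|_p + \mathcal{c}_p\sqrt{a_t})$, whence \eqref{WW1} follows.

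The only delicate point I anticipate is a bookkeeping one: verifying that the formal embedding of $\eta_t$ as a degenerate SDE with zero diffusion and random drift $\eta'_t$ is admissible within Corollary \ref{Cor15}. The corollary itself requires only that $a^{(1)}$ and $b^{(1)}$ be non-random, which is satisfied, so the issue reduces to recognizing the independence ``for $i=1,2$'' in Section 2 as the already assumed independence of $\eta$ from $(\xi_0,W)$. Once this identification is made, the rest of the argument is a direct substitution, and no serious analytic obstacle remains since everything takes place on compact subintervals of $[0,1)$, avoiding the singularity of $(\log\sigma_t)'$ at the terminal time.
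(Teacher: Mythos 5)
Your proposal is correct and follows essentially the same route as the paper: apply Corollary \ref{Cor15} to the decomposition $X_t=\xi_t+\eta_t$ (with $b^{(1)}(t,x)=(\log\sigma_t)'x$, $a^{(1)}=\tfrac12\sigma_t^2a'_t\mathbb{I}_d$, and $\eta_t$ as a degenerate diffusion with drift $\eta'_t$), then split off $(\log\sigma_t)'x$ to get the stated drift, and prove \eqref{WW1} via the coupling $(X_t,\eta_1)$ with $\|\bar\xi_t\|_p=\sigma_t\sqrt{a_t}\,\mathcal{c}_p$. Your explicit remarks on embedding $\eta_t$ into the SDE framework and on $\xi_1=0$ only make precise what the paper's proof leaves implicit.
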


\begin{proof}
Since $X_t$ is the sum of two independent random variables $\xi_t$ and $\eta_t$, 
under \eqref{AS1}, the density of $X_t$ is given by the convolution
$$
\varphi_t(x) = \mathbb{E} \rho_t(x - \eta_t).
$$
Moreover, by Corollary \ref{Cor15}, $\varphi_t$ solves the following Fokker-Planck equation:
$$
\partial_t \varphi_t = \partial^2_{jk} (a_{jk} \varphi_t) - \partial_j (b_j \varphi_t),
$$
with
$$
a(t, x) = \frac{\mathbb{E}[a'_t \sigma_t^2 \rho_t(x - \eta_t)]}{2 \mathbb{E} \rho_t(x - \eta_t)} \mathbb{I}_d = \frac{a'_t \sigma_t^2}{2} \mathbb{I}_d,
$$
and
$$
b(t, x) = \frac{\mathbb{E}[((\log \sigma_t)' (x - \eta_t) + \eta'_t) \rho_t(x - \eta_t)]}{\mathbb{E} \rho_t(x - \eta_t)}.
$$
This gives the desired PDE \eqref{PDE1}. As for \eqref{WW1}, by definition, we have
$$
\mathcal{W}_p(\mu_t, \nu) \leq \|\sigma_t \xi_0 + \bar{\xi}_t + \eta_t - \eta_0\|_p \leq \sigma_t \|\xi_0\|_p + \|\bar{\xi}_t\|_p + \|\eta_t - \eta_0\|_p,
$$
which, together with $\|\bar{\xi}_t\|_p = \sigma_t \sqrt{a_t} \mathcal{c}_p$, 
where $\mathcal{c}_p$ is defined in \eqref{Cons0}, yields \eqref{WW1}.
\end{proof}


\begin{remark}
The estimate \eqref{WW1} provides the convergence rate of $\mu_t \to \nu$ as $t \to 1$. Notice that the right-hand side of \eqref{WW1} depends on the dimension. For each component $i = 1, \dots, d$, letting $\mu^i_t$ be the law of $X^i_t$, we also have
$$
\mathcal{W}_2(\mu^i_t, \mu^i_1) \leq \sigma_t \|\xi^i_0\|_2 + \sigma_t \sqrt{a_t} + \|\eta^i_t - \eta^i_0\|_2,
$$
which provides the convergence rate for each component.
\end{remark}

\begin{remark}
Suppose that $\xi_0$ has a density or
\begin{equation} \label{LL}
\sigma^2_t a_t > 0 \quad \text{for each } t \in (0, 1).
\end{equation}
Then, it is easy to see that condition \eqref{AS1} holds. We shall discuss these two cases below.
\end{remark}

\subsection{Forward ODE Approach: Starting from Absolutely Continuous Distributions}
In this subsection, we use forward ODE starting from any absolutely continuous distribution to generate the sample.
For this aim, we assume that 
$$
\boxed{\mu(\dif x)=\rho_0(x)\dif x,\  a_t=0, \  \eta_t=\beta_t\eta,\ \eta\sim\nu,}
$$
where $\rho_0$ is a $C^1$-density function (for examples, Gaussian or stable distribution).
Suppose that $\sigma_t, \beta_t\in C^1([0,1])$ satisfy \eqref{Sig0}. In this case, the density of $\xi_t=\sigma_t\xi_0$ is given by
$$
\rho_t(x)=\rho_0(x/\sigma_t)/\sigma_t^d.
$$
In particular, by Theorem \ref{Th21}, the density $\varphi_t(x)=\mE\rho_t(x-\beta_t\eta)$ 
solves the  PDE
$$
\p_t\varphi_t=-\div(b_t\varphi_t),
$$
with
\begin{align}\label{SD02}
b_t(x):=(\log\sigma_t)'[x-\cD_t(x)],
\end{align}
where
\begin{align}\label{DD1}
\cD_t(x)=\frac{\mE[\eta\rho_t(x-\beta_t\eta)]}{\mE\rho_t(x-\beta_t\eta)}
=\frac{\mE[\eta\rho_0((x-\beta_t\eta)/\sigma_t)]}{\mE\rho_0((x-\beta_t\eta)/\sigma_t)}.
\end{align}
\br
$\cD_t(x)$ is usually not bounded and Lipschitz continuous except $\eta$ is bounded.
Moreover, from the definition of $b_t(x)$, one sees that $b_0(x)=\sigma_0'[x-\mE\eta]$, and $b_1(x)$ in general does not make sense since $\sigma_1=0$.
\er


Now we consider the following ODE:
\begin{align}\label{ODE10}
X'_t=b_t(X_t).
\end{align}
We have the following well-posedness result under mild assumptions on $\mu$ and $\nu$.
\bt\label{Th23}
Suppose that \eqref{Sig0} holds and  $\mE|\xi_0|^p+\mE|\eta|^p<\infty$ for some $p\geq 1$.
\begin{enumerate}[(i)]
\item 
For any $t_0\in(0,1)$, there is a solution $(X_t)_{t\in[0,t_0]}$ to ODE \eqref{ODE10} with $X_0=\xi_0$ and
\begin{align}\label{SQ1}
(\mP\circ X^{-1}_t)(\dif x)=\mu_t(\dif x)=\varphi_t(x)\dif x=\mE\rho_t(x-\beta_t\eta)\dif x.
\end{align}
Moreover, for each $t\in[0,t_0]$, 
\begin{align}\label{Mo11}
\left\|X_t\right\|_p\leq  \sigma_t\|\xi_0\|_p+\beta_t\|\eta\|_p,
\end{align}
and
\begin{align}\label{WW3}
\cW_p(\mu_{t},\nu)\leq \sigma_{t}\|\xi_0-\eta\|_p.
\end{align}
\item Suppose that  for some $K,\kappa>0$,
\begin{align}\label{SD12}
0<\rho_0(x)\leq K,\ \ |\nabla\rho_0(x)|\leq\kappa(1+|x|)^{p-1}.
\end{align}
Then for each $x_0\in\mR^d$, ODE \eqref{ODE10} has a unique solution $(X_t(x_0))_{t\in[0,1)}$ with $X_0(x_0)=x_0$ and
\begin{align}\label{GZ2}
X_t(\cdot)\in C^1,\ X_t(\xi_0)\sim\mu_t(\dif x)=\varphi_t(x)\dif x,\ \ t\in[0,1).
\end{align}
\item Under \eqref{SD12},
there is a measurable mapping $x_0\mapsto X_1(x_0)$ such that for all $t\in[0,1]$,
$$
\|X_t(\xi_0)-X_1(\xi_0)\|_p\leq \sigma_t\|\xi_0-\eta\|_p.
$$
\end{enumerate}
\et
\begin{proof}
(i) Fix $t_0\in(0,1)$. Since $\varphi_t(x)=\mE\rho_t(x-\beta_t\eta)$, by Fubini's theorem, we have
\begin{align*}
\int^{t_0}_0\!\!\int_{\mR^d}|b_t(x)|\varphi_t(x)\dif x
&\leq \int^{t_0}_0|(\log\sigma_t)'|\left[\int_{\mR^d}\Big(|x|\varphi_t(x)+
\mE[|\eta|\rho_t(x-\beta_t\eta)]\Big)\dif x\right]\dif t\\
&=\int^{t_0}_0|(\log\sigma_t)'|\left(\mE|\sigma_t\xi_0+\beta_t\eta|+\mE|\eta|\right)\dif t\\
&\leq \left(\mE|\xi_0|+2\mE|\eta|\right)\int^{t_0}_0|(\log\sigma_t)'|\dif t
=\left(\mE|\xi_0|+2\mE|\eta|\right)\log\sigma^{-1}_{t_0}.
\end{align*}
By the superposition principle (see \cite[Theorem 2.5]{Tr13}),
there is at least one solution to ODE \eqref{ODE10} such that for each $t\in[0,t_0]$, \eqref{SQ1} holds.
For \eqref{Mo11}, by the chain rule and \eqref{ODE10} we have
\begin{align*}
\dif \left(\frac{X_t}{\sigma_t}\right)
=\frac{\dif X_t}{\sigma_t}-\frac{\sigma'_tX_t}{\sigma_t^2} \dif t
&=\frac{b_t(X_t)}{\sigma_t}\dif t-\frac{\sigma'_t X_t}{\sigma_t^2} \dif t
=-\frac{\sigma'_t\cD_t(X_t)}{\sigma^2_t}\dif t,
\end{align*}
which implies by $\beta'_t=-\sigma'_t>0$ that
\begin{align*}
\left\|\sup_{s\in[0,t]}\frac{|X_s|}{\sigma_s}\right\|_p
\leq\|\xi_0\|_p+\int^t_0
\frac{\beta_s'\|\cD_s(X_s)\|_p}{\sigma_s^{2}}\dif s.
\end{align*}
Recalling \eqref{DD1}, by \eqref{SQ1}, Jensen's inequality and Fubini's theorem, we have
\begin{align*}
\mE|\cD_s(X_s)|^p&=\int_{\mR^d}|\cD_s(x)|^p\varphi_s(x)\dif s
\leq \int_{\mR^d}\frac{\mE[|\eta|^p\rho_t(x-\beta_s\eta)]}
{\mE\rho_s(x-\beta_s\eta)}\varphi_s(x)\dif x\\
&=\mE\left[|\eta|^p\int_{\mR^d}\rho_s(x-\beta_s\eta)\dif x\right]=\mE|\eta|^p.
\end{align*}
Hence,
$$
\left\|X_t\right\|_p
\leq \sigma_t\left\|\sup_{s\in[0,t]}\frac{|X_s|}{\sigma_s}\right\|_p
\leq\sigma_t\|\xi_0\|_p+\sigma_t\|\eta\|_p\int^t_0\frac{\beta'_s}{\sigma_s^2}\dif s.
$$
The estimate \eqref{Mo11} now follows by noting that
\begin{align}\label{DS13}
\int^t_0\frac{\beta_s'}{\sigma_s^2}\dif s=-\int^t_0\frac{\sigma_s'}{\sigma_s^2}\dif s =\frac{1}{\sigma_t}-1=\frac{\beta_t}{\sigma_t}.
\end{align}
As for \eqref{WW3}, by the definition of Wasserstein metric, we have
$$
\cW_p(\mu_t,\nu)\leq \|\sigma_t\xi_0+\beta_t\eta-\eta\|_p=\sigma_t\|\xi_0-\eta\|_p.
$$
(ii) To show the uniquness, by the chain rule, we have
\begin{align*}
\nabla_x\cD_t(x)&=\frac{\mE[\eta\otimes\nabla\rho_0((x-\beta_t\eta)/\sigma_t)]}{\sigma_t\mE\rho_0((x-\beta_t\eta)/\sigma_t)}
-\frac{\mE[\eta\rho_0((x-\beta_t\eta)/\sigma_t)]\otimes\mE[\nabla\rho_0((x-\beta_t\eta)/\sigma_t)]}{\sigma_t(\mE\rho_0((x-\beta_t\eta)/\sigma_t))^2}.
\end{align*}
By \eqref{SD12}, it is easy to see that
$$
|\nabla_x\cD_t(x)|\leq
\frac{\kappa\mE[|\eta|(1+|x-\beta_t\eta|/\sigma_t)^{p-1}]}{\sigma_t\mE\rho_0((x-\beta_t\eta)/\sigma_t)}
+\frac{K\kappa\mE|\eta|\,\mE[(1+|x-\beta_t\eta|/\sigma_t)^{p-1}]}{\sigma_t(\mE\rho_0((x-\beta_t\eta)/\sigma_t))^2},
$$
and since $\rho_0(x)>0$ is continuous, 
by the dominated convergence theorem, the mapping
$$
(t,x)\mapsto\mE\rho_0((x-\beta_t\eta)/\sigma_t)
$$ 
is continuous on $[0,1)\times\mR^d$ and strictly positive. In particular, 
for each $R>0$ and $t_0\in(0,1)$,
$$
\inf_{t\in[0,t_0], |x|\leq R}\mE\rho_0((x-\beta_t\eta)/\sigma_t)>0.
$$
Thus, since $\mE|\eta|^p<\infty$, we have for each $t_0\in[0,1)$ and $R>0$,
\begin{align}\label{AS71}
\sup_{t\in[0,t_0], |x|\leq R}|\nabla_x\cD_t(x)|<\infty.
\end{align}
By \cite[Proposition 8.1.8]{AGS05}, there is a Lebesgue null set $E\subset\mR^d$ such that 
for each $x_0\notin E$,  ODE \eqref{ODE10}
admits a unique solution $(X_t(x_0))_{t\in[0,1)}$ so that $X_t(\xi_0)\sim\mu_t$. It remains to show that for all $x_0\in\mR^d$, ODE \eqref{ODE10} admits a unique solution $(X_t(x_0))_{t\in[0,1)}$ 
and $X_t(\cdot)\in C^1$, which follows by \eqref{AS71} and \cite[page 95, Theorem 3.1]{H64}.

(iii) Fix $0\leq t_0<t_1<1$. By \eqref{GZ2}, the definition of $b_s(x)$ and Jensen's inequality, we have
\begin{align*}
\|X_{t_1}(\xi_0)-X_{t_0}(\xi_0)\|_p&\leq\int^{t_1}_{t_0}\|b_s(X_s(\xi_0))\|_p\dif s
=\int^{t_1}_{t_0}\left(\int_{\mR^d}|b_s(x)|^p\varphi_s(x)\dif x\right)^{1/p}\dif s\\
&\leq\int^{t_1}_{t_0}|(\log\sigma_s)'|\left(\int_{\mR^d}\frac{\mE\big[|x-\eta|^p\rho_s(x-\beta_s\eta)\big]}
{\mE\rho_s(x-\beta_s\eta)}\varphi_s(x)\dif x\right)^{1/p}\dif s\\
&=\int^{t_1}_{t_0}|(\log\sigma_s)'|\left(\int_{\mR^d}\!\int_{\mR^d}|x-y|^p\rho_s(x-\beta_sy)\nu(\dif y)\dif x\right)^{1/p}\dif s.
\end{align*}
By the change of variable $x-\beta_sy=\sigma_s z$ and $\sigma_s+\beta_s=1$, and recalling $\rho_s(x)=\rho_0(x/\sigma_s)/\sigma_s^d$, we further have
\begin{align}\label{SQ12}
\|X_{t_1}(\xi_0)-X_{t_0}(\xi_0)\|_p
&\leq\int^{t_1}_{t_0}|\sigma_s'|\left(\int_{\mR^d}\int_{\mR^d}|z-y|^p\rho_0(z)\nu(\dif y)\dif z\right)^{1/p}\dif s\no\\
&=\|\xi_0-\eta\|_p\int^{t_1}_{t_0}|\sigma_s'|\dif s=\|\xi_0-\eta\|_p(\sigma_{t_0}-\sigma_{t_1}),
\end{align}
which implies that
$$
\lim_{t_0,t_1\uparrow 1}\int_{\mR^d}|X_{t_1}(x)-X_{t_0}(x)|^p\rho_0(x)\dif x=0.
$$
From these we conclude (iii).
\end{proof}

\br
Suppose that $\rho_0(x)=\e^{-|x|^2/2}/(2\pi)^{d/2}$ is the Gaussian density. Then
$$
b_t(x)=\frac{\sigma'_t\mE[(x-\eta)\e^{-|x-\beta_t\eta|^2/(2\sigma^2_t)}]}{\sigma_t\mE\e^{-|x-\beta_t\eta|^2/(2\sigma^2_t)}}.
$$
This case has been studied in \cite{Zh24}, where $\eta$ is assumed to be bounded. 
\er

\br
Suppose that $\nu$ has a density $\rho_1(x)$. 
Note that
$$
b_t(x)=\frac{\sigma'_t\mE[(x-\eta) \rho_0((x-\beta_t\eta)/\sigma_t)]}{\sigma_t\mE \rho_0((x-\beta_t\eta)/\sigma_t)}
=\frac{\sigma'_t\int_{\mR^d}(x-y) \rho_0((x-\beta_ty)/\sigma_t)\rho_1(y)\dif y}{\sigma_t\int_{\mR^d} \rho_0((x-\beta_ty)/\sigma_t) \rho_1(y)\dif y}.
$$
By the change of variable $x-y=\sigma_t z$, we obtain
$$
b_t(x)=\frac{\sigma_t'\int_{\mR^d}z\rho_0(x+\beta_tz)\rho_1(x-\sigma_t z)\dif z}{\int_{\mR^d}\rho_0(x+\beta_tz)\rho_1(x-\sigma_t z)\dif z}.
$$
To calculate the above integral, we can use Monte-Carlo's method.
Let $\xi\sim N(0,\mI_d)$. We can rewrite the above expression as
$$
b_t(x)=\frac{\sigma_t'\mE[\xi\rho_0(x+\beta_t\xi)\e^{|\xi|^2/2}\rho_1(x-\sigma_t \xi)]}{\mE[\rho_0(x+\beta_t\xi)\e^{|\xi|^2/2}\rho_1(x-\sigma_t \xi)]}.
$$
For given $N\in\mN$,
let $\xi_1,\cdots,\xi_N$ be i.i.d. standard normal random variables. We introduce the following approximation of $b_t(x)$:
$$
b^N_t(x)=\frac{\sigma'_t\sum_{i=1}^N[\xi_i\rho_0(x+\beta_t\xi_i)\e^{|\xi_i|^2/2}\rho_1(x-\sigma_t\xi_i)]}
{\sum_{i=1}^N[\rho_0(x+\beta_t\xi_i)\e^{|\xi_i|^2/2}\rho_1(x-\sigma_t\xi_i)]},
$$
and consider the approximation ODE:
$$
\dif X_t^N/\dif t= b_t(X^N_t),\ \ X^N_0\sim N(0,\mI_d).
$$
The convergence of the corresponding processes will be studied in Section 4.
\er

\subsection{Forward SDE Approach: Starting from Any Distribution}\label{Sec31}

In this subsection, we discuss a direct forward stochastic differential equation (SDE) approach to generate samples. For this purpose, we choose the functions used in $X_t$ as follows:
$$
\boxed{\eta_t = \beta_t \eta, \quad \eta \sim \nu, \quad a_t = \epsilon \beta_t/\sigma_t,}
$$
where $\epsilon > 0$ represents the noise intensity, and $\sigma_t$, $\beta_t$ satisfy Assumption \eqref{Sig0}. 
In this case, by \eqref{GG1}, the density of $\xi_t$ is given by
\begin{align}\label{SQ13}
\rho_t(x) = \mathbb{E} \left[ \phi_{\epsilon \beta_t \sigma_t}(x - \sigma_t \xi_0) \right] = \mathbb{E} \left[ \frac{\e^{-|x - \sigma_t \xi_0|^2 / (2 \epsilon \beta_t \sigma_t)}}{(2 \pi \epsilon \beta_t \sigma_t)^{d/2}} \right],
\end{align}
and
$$
\sqrt{a'_t} \sigma_t = \sqrt{\epsilon (\beta'_t \sigma_t - \beta_t \sigma'_t)} = \sqrt{\epsilon \beta'_t}.
$$
In particular, by \eqref{PDE1}, $\varphi_t(x) = \mathbb{E} \rho_t(x - \beta_t \eta)$ satisfies the following second-order PDE:
$$
\partial_t \varphi_t = \frac{\epsilon \beta'_t}{2} \Delta \varphi_t - \nabla \cdot (b \varphi_t),
$$
where
\begin{align}\label{SD2}
b_t(x) := (\log \sigma_t)' \left[ x - \mathcal{D}_t(x) \right],
\end{align}
and, by \eqref{SQ13} and the independence of $\xi_0$ and $\eta$,
\begin{align}\label{DD8}
\mathcal{D}_t(x) := \frac{\mathbb{E} \left[ \eta \rho_t(x - \beta_t \eta) \right]}{\mathbb{E} \rho_t(x - \beta_t \eta)} 
= \frac{\mathbb{E} \left[ \eta \phi_{\epsilon \beta_t \sigma_t}(x - \beta_t \eta - \sigma_t \xi_0) \right]}{\mathbb{E} \phi_{\epsilon \beta_t \sigma_t}(x - \beta_t \eta - \sigma_t \xi_0)}
= \frac{\mathbb{E} \left[ \eta\e^{-|x - \beta_t \eta - \sigma_t \xi_0|^2 / (2 \epsilon \beta_t \sigma_t)} \right]}{\mathbb{E}\e^{-|x - \beta_t \eta - \sigma_t \xi_0|^2 / (2 \epsilon \beta_t \sigma_t)}}.
\end{align}
\br
Note that $\mathcal{D}_0(x)$ and $\mathcal{D}_1(x)$ are not well-defined due to $\beta_0 = \sigma_1 = 0$.
\er
Now, we consider the following SDE associated with the above Fokker-Planck equation:
\begin{align}\label{SDE0}
\dif X_t = b_t(X_t) \dif t + \sqrt{\epsilon \beta'_t} \dif W_t, \quad X_0 = \xi_0.
\end{align}
To solve this SDE, we first establish the following lemma, which will be used to prove uniqueness.

\bl\label{Le34}
\begin{enumerate}[(i)]
\item Suppose $|\eta| \leq K$ and $\mathbb{E} |\xi_0| < \infty$. Then, for all $t \in (0, 1)$ and $x \in \mathbb{R}^d$,
\begin{align}\label{KJ681}
|\mathcal{D}_t(x)| \leq K, \quad |\nabla \mathcal{D}_t(x)| \leq 2 K^2/(\epsilon \sigma_t).
\end{align}

\item Suppose $\mathbb{E}\e^{\lambda |\eta|} < \infty$ for any $\lambda > 0$ and $\xi_0 = x_0$ is non-random. Then,
\begin{align}\label{KJ381}
\sup_{t \in [0, t_0], |x| \leq R} |\nabla \mathcal{D}_t(x)| < \infty, \quad \forall R > 0, t_0 \in (0, 1).
\end{align}

\item Suppose $\mu(\dif x) = \rho_0(x) \dif x$, and for some $p \geq 1$, $\mathbb{E} |\eta|^p < \infty$, and for some $K, \kappa > 0$,
$$
0 < \rho_0(x) \leq K, \quad |\nabla \rho_0(x)| \leq \kappa (1 + |x|)^{p-1}.
$$
Then, the estimate \eqref{KJ381} still holds.
\end{enumerate}
\el

\begin{proof}
(i) For simplicity, let
$$
\mathcal{E}_t(x, \eta) :=\e^{-|x - \beta_t \eta - \sigma_t \xi_0|^2 / (2 \epsilon \beta_t \sigma_t)}.
$$
By the chain rule, we have
\begin{align*}
\nabla \mathcal{D}_t(x) = \frac{\mathbb{E} \left[ (\eta \otimes \eta) \mathcal{E}_t(x, \eta) \right]}{\epsilon \sigma_t \mathbb{E} \mathcal{E}_t(x, \eta)} 
- \frac{\mathbb{E} \left[ \eta \mathcal{E}_t(x, \eta) \right] \otimes \mathbb{E} \left[ \eta \mathcal{E}_t(x, \eta) \right]}{\epsilon \sigma_t (\mathbb{E} \mathcal{E}_t(x, \eta))^2}.
\end{align*}
If $\eta$ is bounded by $K$, then \eqref{KJ681} follows directly.

(ii) Observing that $\beta_t = 1 - \sigma_t$, we have
\begin{align}\label{BC8}
\begin{split}
|x - \beta_t \eta - \sigma_t \xi_0|^2 
&= |x|^2 + \beta_t^2 |\eta|^2 + \sigma_t^2 |\xi_0|^2 
- 2 \beta_t \langle x, \eta \rangle - 2 \sigma_t \langle x, \xi_0 \rangle + 2 \beta_t \sigma_t \langle \eta, \xi_0 \rangle \\
&= \beta_t |x - \eta|^2 + \sigma_t |x - \xi_0|^2 - \beta_t \sigma_t |\eta - \xi_0|^2.
\end{split}
\end{align}
For $\xi_0 = x_0$ non-random, by eliminating the common factor $e^{-|x - x_0|^2 / (2 \epsilon \beta_t)}$ from the numerator and denominator in \eqref{DD8}, 
we obtain, for $t \in (0, 1)$,
\begin{align}\label{DS3}
\mathcal{D}_t(x) = \frac{\mathbb{E} \left[ \eta\e^{(\sigma_t |\eta - x_0|^2 - |x - \eta|^2) / (2 \epsilon \sigma_t)} \right]}{\mathbb{E}\e^{(\sigma_t |\eta - x_0|^2 - |x - \eta|^2) / (2 \epsilon \sigma_t)}}
= \frac{\mathbb{E} \left[ \eta\e^{(2 \langle \eta, x - \sigma_t x_0 \rangle - \beta_t |\eta|^2) / (2 \epsilon \sigma_t)} \right]}{\mathbb{E}\e^{(2 \langle \eta, x - \sigma_t x_0 \rangle - \beta_t |\eta|^2) / (2 \epsilon \sigma_t)}}.
\end{align}
For simplicity, let
$$
\mathcal{C}_t(x, \eta) :=\e^{(2 \langle \eta, x - \sigma_t x_0 \rangle - \beta_t |\eta|^2) / (2 \epsilon \sigma_t)}.
$$
By the chain rule, it is easy to see that
\begin{align*}
\nabla_x \mathcal{D}_t(x) 
&= \frac{\mathbb{E} \left[ (\eta \otimes \eta) \mathcal{C}_t(x, \eta) \right]}{\epsilon \sigma_t \mathbb{E} \mathcal{C}_t(x, \eta)} 
- \frac{\mathbb{E} \left[ \eta \mathcal{C}_t(x, \eta) \right] \otimes \mathbb{E} \left[ \eta \mathcal{C}_t(x, \eta) \right]}{\epsilon \sigma_t (\mathbb{E} \mathcal{C}_t(x, \eta))^2}.
\end{align*}
Since $\mathbb{E}\e^{\lambda |\eta|} < \infty$ for any $\lambda > 0$, by the dominated convergence theorem, 
the function $[0, 1) \times \mathbb{R}^d \ni (t, x) \mapsto \mathbb{E} \mathcal{C}_t(x, \eta)$ is continuous and strictly positive. Hence,
$$
\inf_{t \in [0, t_0], |x| \leq R} \mathbb{E} \mathcal{C}_t(x, \eta) > 0.
$$
The desired estimate \eqref{KJ381} follows.

(iii) If $\mu(\dif x) = \rho_0(x) \dif x$, then by \eqref{AS1} and \eqref{GG41},
$$
\mathcal{D}_t(x) = \frac{\mathbb{E} \left[ \eta \rho_t(x - \beta_t \eta) \right]}{\mathbb{E} \rho_t(x - \beta_t \eta)} 
= \frac{\mathbb{E} \left[ \eta \rho_0((x - \beta_t \eta - \bar{\xi}_t) / \sigma_t) \right]}{\mathbb{E} \rho_0((x - \beta_t \eta - \bar{\xi}_t) / \sigma_t)}.
$$
As in the proof of \eqref{AS71}, \eqref{KJ381} holds.
\end{proof}

\br
The assumption of exponential moments for $\eta$ in (ii) ensures the continuity of $\mathcal{D}_t(x)$ at $t = 0$. 
Note that we divide the factor $\e^{-|x - x_0|^2 / (2 \epsilon \beta_t)}$ in \eqref{DS3} from the numerator and denominator, 
which vanishes at $t = 0$, so that $\mathbb{E} \mathcal{C}_t(x, \eta)$ is well-defined at $t = 0$. 
In other words, the continuity of $\mathcal{D}_t(x)$ at $t = 0$ cannot be directly deduced from \eqref{DD8}.
\er

We now present and prove the following result.

\bt\label{Th311}
Suppose that \eqref{Sig0} holds and $\mathbb{E}|\xi_0|^p + \mathbb{E}|\eta|^p < \infty$ for some $p \geq 1$.
\begin{enumerate}[(i)]
\item For any $t_0 \in [0, 1)$, there exists a solution $(X_t)_{t \in [0, t_0]}$ to the SDE \eqref{SDE0} starting from $\xi_0$, and
\begin{align}\label{Mo1}
\left\|\sup_{s \in [0, t]} \frac{|X_s|}{\sigma_s}\right\|_p \leq \|\xi_0\|_p + C_p \sqrt{\frac{\epsilon d}{\sigma_t}} + \|\eta\|_p \frac{\beta_t}{\sigma_t},
\end{align}
where $C_p > 0$ depends only on $p$. 
Moreover, for each $t \in (0, t_0]$, the law $\mu_t$ of $X_t$ has the density $\varphi_t(x)$, and for the constant $\mathcal{c}_p$ in \eqref{Cons0},
\begin{align}\label{WW2}
\mathcal{W}_p(\mu_t, \nu) \leq \sigma_t \|\xi_0\|_p + \mathcal{c}_p \sqrt{\epsilon \beta_t \sigma_t} + \sigma_t \|\eta\|_p.
\end{align}

\item Under each case of Lemma \ref{Le34}, strong uniqueness holds, and there exists a random variable $X_1 \sim \nu$ such that for  the constant $\mathcal{c}_p$ in \eqref{Cons0} and all $t \in [0, 1]$,
\begin{align}\label{Con6}
\|X_t - X_1\|_p \leq \sigma_t \|\xi_0 - \eta\|_p + 2 \mathcal{c}_p \sqrt{\epsilon \sigma_t}.
\end{align}
\end{enumerate}
\et

\begin{proof}
(i) Fix $t_0 \in [0, 1)$. As in the proof of Theorem \ref{Th23}, by \eqref{GG1}, we have
\begin{align*}
\int^{t_0}_0 \int_{\mathbb{R}^d} |b_t(x)| \varphi_t(x) \dif x \dif t 
&\leq \int^{t_0}_0 |(\log \sigma_t)'| \left( \mathbb{E}|\sigma_t \xi_0 + \bar{\xi}_t + \beta_t \eta| + \mathbb{E}|\eta| \right) \dif t < \infty.
\end{align*}
By the superposition principle (see \cite[Theorem 2.5]{Tr13}), there exists at least one solution to the SDE \eqref{SDE0} starting from $\xi_0$, such that for each $t \in (0, t_0]$,
$$
(\mathbb{P} \circ X^{-1}_t)(\dif x) = \mu_t(\dif x) = \varphi_t(x) \dif x.
$$
We now prove \eqref{Mo1}. By the chain rule and \eqref{SD2}, we have
\begin{align*}
\dif \left( \frac{X_t}{\sigma_t} \right) 
= \frac{\dif X_t}{\sigma_t} - \frac{\sigma'_t X_t}{\sigma_t^2} \dif t 
= \frac{b_t(X_t)}{\sigma_t} \dif t + \frac{\sqrt{\epsilon \beta'_t}}{\sigma_t} \dif W_t - \frac{\sigma'_t X_t}{\sigma_t^2} \dif t 
= -\frac{\sigma'_t \mathcal{D}_t(X_t)}{\sigma^2_t} \dif t + \frac{\sqrt{\epsilon \beta'_t}}{\sigma_t} \dif W_t.
\end{align*}
Since $\sigma'_t < 0$, we obtain
\begin{align*}
\left\|\sup_{s \in [0, t]} \frac{|X_s|}{\sigma_s}\right\|_p 
&\leq \|\xi_0\|_p - \int^t_0 \frac{\sigma'_s \|\mathcal{D}_s(X_s)\|_p}{\sigma_s^2} \dif s 
+ \left\|\sup_{s \in [0, t]} \left| \int^s_0 \frac{\sqrt{\epsilon \beta'_r}}{\sigma_r} \dif W_r \right| \right\|_p.
\end{align*}
By definition \eqref{DD8} and Jensen's inequality, we have
\begin{align*}
\|\mathcal{D}_s(X_s)\|_p 
&= \left( \int_{\mathbb{R}^d} \left| \frac{\mathbb{E}[\eta \rho_s(x - \beta_s \eta - \sigma_s \xi_0)]}{\mathbb{E} \rho_s(x - \beta_s \eta - \sigma_s \xi_0)} \right|^p \varphi_s(x) \dif x \right)^{1/p} \leq \|\eta\|_p.
\end{align*}
On the other hand, by the Burkholder-Davis-Gundy (BDG) inequality and \eqref{DS13}, we have
\begin{align*}
\mathbb{E} \left( \sup_{s \in [0, t]} \left| \int^s_0 \frac{\sqrt{\epsilon \beta'_r}}{\sigma_r} \dif W_r \right| \right)^p 
&\leq C_p \left( d \int^{t}_0 \frac{\epsilon \beta'_r}{\sigma^2_r} \dif r \right)^{p/2} 
= C_p \left( \frac{d \epsilon \beta_t}{\sigma_t} \right)^{p/2}.
\end{align*}
Combining these calculations, we obtain
\begin{align}\label{KJ12}
\left\|\sup_{s \in [0, t]} \frac{|X_s|}{\sigma_s}\right\|_p 
\leq \|\xi_0\|_p + \|\eta\|_p \frac{\beta_t}{\sigma_t} + C_p \sqrt{\frac{\epsilon d \beta_t}{\sigma_t}}.
\end{align}
This proves \eqref{Mo1}. The bound \eqref{WW2} follows directly from \eqref{WW1}.

(ii) Next, we prove strong uniqueness. By the Yamada-Watanabe theorem, it suffices to show pathwise uniqueness. Let $\widetilde{X}_t$ be another solution to the SDE \eqref{SDE0} starting from $\xi_0$. Then,
$$
|X_t - \widetilde{X}_t| \leq \int^t_0 |b_s(X_s) - b_s(\widetilde{X}_s)| \dif s.
$$
Fix $R > 0$ and define the stopping time
$$
\tau_R := \inf \left\{ t \in [0, t_0] : |X_t| > R \text{ or } |\widetilde{X}_t| > R \right\}.
$$
By Lemma \ref{Le34}, we have $\sup_{s \in [0, t_0]} \sup_{|x| \leq R} |\nabla b_s(x)| \leq C_R$. Thus,
$$
|X_{t \wedge \tau_R} - \widetilde{X}_{t \wedge \tau_R}| \leq \int^{t \wedge \tau_R}_0 |b_s(X_s) - b_s(\widetilde{X}_s)| \dif s 
\leq C_R \int^{t \wedge \tau_R}_0 |X_{s \wedge \tau_R} - \widetilde{X}_{s \wedge \tau_R}| \dif s.
$$
By Gronwall's inequality, we conclude that $|X_{t \wedge \tau_R} - \widetilde{X}_{t \wedge \tau_R}| = 0$ for all $t \in [0, t_0]$. 
Finally, since
$$
\mathbb{P}(\tau_R < t_0) = \mathbb{P}\left( \sup_{t \in [0, t_0]} (|X_t| + |\widetilde{X}_t|) \geq R \right) \to 0 \quad \text{as } R \to \infty,
$$
we have $\lim_{R \to \infty} \tau_R = t_0$. Thus, $X_t = \widetilde{X}_t$ for all $t \in [0, t_0]$.

(iii) Fix $0 \leq t_0 < t_1 < 1$. As in the proof of \eqref{SQ12} and by \eqref{SQ13}, we have
\begin{align*}
\left\| \int^{t_1}_{t_0} b_s(X_s) \dif s \right\|_p 
&\leq \int^{t_1}_{t_0} |(\log \sigma_s)'| \left( \int_{\mathbb{R}^d} \int_{\mathbb{R}^d} |x - y|^p \rho_s(x - \beta_s y) \nu(\dif y) \dif x \right)^{1/p} \dif s \\
&= \int^{t_1}_{t_0} |(\log \sigma_s)'| \left( \mathbb{E} \int_{\mathbb{R}^d} \int_{\mathbb{R}^d} |x - y|^p \phi_{\epsilon \beta_s \sigma_s}(x - \beta_s y - \sigma_s \xi_0) \nu(\dif y) \dif x \right)^{1/p} \dif s.
\end{align*}
By the change of variables $x - \beta_s y - \sigma_s \xi_0 = \sigma_s z$ and $\beta_s + \sigma_s = 1$, we obtain
\begin{align*}
\left\| \int^{t_1}_{t_0} b_s(X_s) \dif s \right\|_p 
&\leq \int^{t_1}_{t_0} |\sigma'_s| \left( \mathbb{E} \int_{\mathbb{R}^d} \int_{\mathbb{R}^d} |z + \xi_0 - y|^p \phi_{\epsilon \beta_s / \sigma_s}(z) \nu(\dif y) \dif z \right)^{1/p} \dif s \\
&= \int^{t_1}_{t_0} |\sigma'_s| \left\| \sqrt{\epsilon \beta_s / \sigma_s} \xi + \xi_0 - \eta \right\|_p \dif s,
\end{align*}
where $\xi \sim N(0, \mathbb{I}_d)$ is independent of $\xi_0$ and $\eta$. Hence,
\begin{align}
\|X_{t_1} - X_{t_0}\|_p 
&\leq \left\| \int^{t_1}_{t_0} b_s(X_s) \dif s \right\|_p + \left\| \int^{t_1}_{t_0} \sqrt{\epsilon \beta'_s} \dif W_s \right\|_p\no \\
&\leq \int^{t_1}_{t_0} |\sigma'_s| \left\| \sqrt{\epsilon \beta_s / \sigma_s} \xi + \xi_0 - \eta \right\|_p \dif s + \mathcal{c}_p \sqrt{\epsilon (\beta_{t_1} - \beta_0)}\no \\
&\leq \sqrt{\epsilon} \mathcal{c}_p \int^{t_1}_{t_0} \frac{|\sigma'_s|}{\sqrt{\sigma_s}} \dif s + \|\xi_0 - \eta\|_p (\sigma_{t_0} - \sigma_{t_1}) + \mathcal{c}_p \sqrt{\epsilon (\sigma_{t_0} - \sigma_{t_1})}\no \\
&=\sqrt{\epsilon} \mathcal{c}_p (\sqrt{\sigma_{t_0}} - \sqrt{\sigma_{t_1}}) + \|\xi_0 - \eta\|_p (\sigma_{t_0} - \sigma_{t_1}) + \mathcal{c}_p \sqrt{\epsilon (\sigma_{t_0} - \sigma_{t_1})}. \label{Con01}
\end{align}
This establishes the desired convergence.
\end{proof}

We now provide the proof of Theorem \ref{Th02}.

\begin{proof}[Proof of Theorem \ref{Th02}]
If we set $\beta_t = t$ and $\sigma_t = 1 - t$, then by \eqref{SD2} and \eqref{DD8}, we obtain \eqref{DD108}. Under the assumption $|\eta| \leq K$, by \eqref{KJ681}, the drift $b_t(x)$ is globally Lipschitz continuous, and (i) follows directly. Claims (ii) and (iii) follow from \cite[Theorem 1.2]{MPZ21}. For (iv), by \eqref{Con01}, we have
\begin{align*}
\lim_{t_0, t_1 \uparrow 1} \int_{\mathbb{R}^d} \mathbb{E} |X_{t_1}(x_0) - X_{t_0}(x_0)|^p \mu(\dif x_0) 
= \lim_{t_0, t_1 \uparrow 1} \|X_{t_1}(\xi_0) - X_{t_0}(\xi_0)\|^p_p = 0.
\end{align*}
By Fubini's theorem, there exists a random field $X_1(x_0, \omega)$ defined for $\mu$-almost all $x_0$ such that
$$
\lim_{t_0 \uparrow 1} \int_{\mathbb{R}^d} \mathbb{E} |X_{t_1}(x_0) - X_1(x_0)|^p \mu(\dif x_0) = 0.
$$
The conclusion (iv) now follows from \eqref{Con01}.
\end{proof}

In the following, we assume that the initial distribution $\xi_0$ follows a Gaussian distribution and derive several alternative expressions for $b_t(x)$. Depending on the regularity of the target distribution, we consider three cases:
\begin{enumerate}[(a)]
    \item \textbf{(Zero-order scheme)} We make no assumptions about the target distribution $\nu$.
    \item \textbf{(First-order scheme)} We assume that the target distribution $\nu$ admits a density $\rho$.
    \item \textbf{(Second-order scheme)} We assume that the target distribution $\nu$ has a $C^1$-density $\rho$.
\end{enumerate}

\bl[Zero-order scheme]\label{Le38}
Let $\gamma > 0$ and $\xi_0 \sim N(0, \gamma \mathbb{I}_d)$ be independent of $\eta$. Then,
\begin{align}\label{AA36}
    b_t(x) = \frac{\sigma'_t \mathbb{E}\left[(x - \eta)\e^{-|x - \beta_t \eta|^2 / (2 \ell_t \sigma_t)}\right]}{\sigma_t \mathbb{E} \left[\e^{-|x - \beta_t \eta|^2 / (2 \ell_t \sigma_t)}\right]}, \ \
    \varphi_t(x) = \frac{\mathbb{E} \left[\e^{-|x - \beta_t \eta|^2 / (2 \ell_t \sigma_t)}\right]}{(2 \pi \ell_t \sigma_t)^{d/2}},
\end{align}
where $\eps>0$ and $\ell_t := \epsilon \beta_t + \gamma \sigma_t$. Moreover, we also have
\begin{align}\label{AA30}
    b_t(x) = \ell_t (\log \beta_t)' \nabla_x\log \mathbb{E} \left[\e^{\beta_t (\sigma_t |\eta|^2 - |x - \eta|^2) / (2 \ell_t \sigma_t)}\right]
    = (\log \beta_t)' \left[\ell_t \nabla_x \log \varphi_t(x) + x\right].
\end{align}
\el

\begin{proof}
Let $\gamma > 0$ and $\xi_0 \sim N(0, \gamma \mathbb{I}_d)$. By \eqref{SQ13} and Lemma \ref{Le81}, we have
\begin{align}\label{DZ2}
    \rho_t(x) = \mathbb{E} \phi_{\epsilon \beta_t \sigma_t}(x - \sigma_t \xi_0) = \phi_{\epsilon \beta_t \sigma_t + \gamma \sigma_t^2}(x) = \phi_{\ell_t \sigma_t}(x).
\end{align}
Substituting this into the definition of $\mathcal{D}_t(x)$ and using the independence of $\xi_0$ and $\eta$, we get
$$
    \mathcal{D}_t(x) = \frac{\mathbb{E} \left[\eta \phi_{\ell_t \sigma_t}(x - \beta_t \eta)\right]}{\mathbb{E} \phi_{\ell_t \sigma_t}(x - \beta_t \eta)},
$$
and by the definition of $\varphi_t(x)$,
$$
    \varphi_t(x) = \mathbb{E} \rho_t(x - \beta_t \eta) = \mathbb{E} \phi_{\ell_t \sigma_t}(x - \beta_t \eta).
$$
Thus, we obtain \eqref{AA36} by \eqref{SD2}. Moreover, noting that
\begin{align}\label{AA306}
    -|x - \beta_t \eta|^2 = \sigma_t \beta_t |\eta|^2 - \beta_t |x - \eta|^2 - \sigma_t |x|^2,
\end{align}
by the definition of $b_t(x)$, we have
\begin{align}\label{AA396}
    b_t(x) &= \frac{\sigma'_t \mathbb{E} \left[(x - \eta)\e^{-|x - \beta_t \eta|^2 / (2 \ell_t \sigma_t)}\right]}{\sigma_t \mathbb{E} \left[\e^{-|x - \beta_t \eta|^2 / (2 \ell_t \sigma_t)}\right]} 
    = \frac{\sigma'_t \mathbb{E} \left[(x - \eta)\e^{\beta_t (\sigma_t |\eta|^2 -  |x - \eta|^2) / (2 \ell_t \sigma_t)}\right]}{\sigma_t \mathbb{E} \left[\e^{\beta_t (\sigma_t |\eta|^2 -  |x - \eta|^2) / (2 \ell_t \sigma_t)}\right]},
\end{align}
which yields the first expression in \eqref{AA30} by the chain rule and $\beta_t'=-\sigma_t'$. Moreover, 
by \eqref{AA36} and \eqref{AA306}, we clearly have
$$
    \mathbb{E} \left[\e^{\beta_t (\sigma_t |\eta|^2 - |x - \eta|^2) / (2 \ell_t \sigma_t)}\right] = (2 \pi \sigma_t \ell_t)^{d/2}\e^{|x|^2 / (2 \ell_t)} \varphi_t(x).
$$
Thus, we get the second expression in \eqref{AA30}.
\end{proof}

\br
In diffusion models, $\nabla_x \log \varphi_t(x)$ is called the score function. If $\epsilon = 0$, this reduces to the ODE;
and if $\gamma=0$, then
$$
b_t(x)=\eps\beta_t' \nabla_x\log \mathbb{E} \left[\e^{ |\eta|^2/(2\eps) - |x - \eta|^2/(2\eps \sigma_t)}\right].
$$
\er

Next, we derive a first-order expression for $b_t(x)$, which depends on the density of the target distribution.

\bl[First-order scheme]\label{Le39}
Let $\gamma,\lambda > 0$ and $\xi_0 \sim N(0, \gamma \mathbb{I}_d)$, and $\xi \sim N(0, \mathbb{I}_d / \lambda)$.
Suppose that $\eta \sim \rho_1(x) \dif x$. Then, for each $t \in (0, 1)$ and $x \in \mathbb{R}^d$,
\begin{align}
    b_t(x) &= \frac{\sqrt{\ell_t} \sigma'_t \mathbb{E} \left[\xi\e^{\beta_t |x - \sqrt{\ell_t \sigma_t} \xi|^2 / (2 \ell_t) + (\lambda - \beta_t) |\xi|^2 / 2} \rho_1(x - \sqrt{\ell_t \sigma_t} \xi)\right]}{\sqrt{\sigma_t} \mathbb{E} \left[\e^{\beta_t |x - \sqrt{\ell_t \sigma_t} \xi|^2 / (2 \ell_t) + (\lambda - \beta_t) |\xi|^2 / 2} \rho_1(x - \sqrt{\ell_t \sigma_t} \xi)\right]}, \label{AA3}
\end{align}
where $\eps>0$, $\ell_t := \epsilon \beta_t + \gamma \sigma_t$, and
\begin{align}\label{AA43}
    \varphi_t(x) = \frac{\mathbb{E} \left[\e^{\beta_t |x - \sqrt{\ell_t \sigma_t} \xi|^2 / (2 \ell_t) + (\lambda - \beta_t) |\xi|^2 / 2} \rho_1(x - \sqrt{\ell_t \sigma_t} \xi)\right]}{\lambda^{d/2}\e^{|x|^2 / (2 \ell_t)}}.
\end{align}
\el

\begin{proof}
Note that by the change of variable $x - y = z \sqrt{\ell_t \sigma_t}$,
\begin{align}
    &\mathbb{E} \left[\e^{(\beta_t \sigma_t |\eta|^2 - \beta_t |x - \eta|^2) / (2 \ell_t \sigma_t)}\right] = \int_{\mathbb{R}^d}\e^{(\beta_t \sigma_t |y|^2 - \beta_t |x - y|^2) / (2 \ell_t \sigma_t)} \rho_1(y) \dif y \nonumber \\
    &\qquad = (\ell_t \sigma_t)^{d/2} \int_{\mathbb{R}^d}\e^{\beta_t |x - \sqrt{\ell_t \sigma_t} z|^2 / (2 \ell_t) - \beta_t |z|^2 / 2} \rho_1(x - \sqrt{\ell_t \sigma_t} z) \dif z \nonumber \\
    &\qquad = (\ell_t \sigma_t)^{d/2} \int_{\mathbb{R}^d}\e^{\beta_t |x - \sqrt{\ell_t \sigma_t} z|^2 / (2 \ell_t) + (\lambda - \beta_t) |z|^2 / 2}\e^{-\lambda |z|^2 / 2} \rho_1(x - \sqrt{\ell_t \sigma_t} z) \dif z \nonumber \\
    &\qquad = (2 \pi \ell_t \sigma_t / \lambda)^{d/2} \mathbb{E} \left[\e^{\beta_t |x - \sqrt{\ell_t \sigma_t} \xi|^2 / (2 \ell_t) + (\lambda - \beta_t) |\xi|^2 / 2} \rho_1(x - \sqrt{\ell_t \sigma_t} \xi)\right], \label{HF1}
\end{align}
where $\xi \sim N(0, \mathbb{I}_d / \lambda)$. Similarly,
\begin{align*}
    &\mathbb{E} \left[(x - \eta)\e^{(\beta_t \sigma_t |\eta|^2 - \beta_t |x - \eta|^2) / (2 \ell_t \sigma_t)}\right] \\
    &\qquad = (2 \pi \ell_t \sigma_t / \lambda)^{d/2} \sqrt{\ell_t \sigma_t} \mathbb{E} \left[\xi\e^{\beta_t |x - \sqrt{\ell_t \sigma_t} \xi|^2 / (2 \ell_t) + (\lambda - \beta_t) |\xi|^2 / 2} \rho_1(x - \sqrt{\ell_t \sigma_t} \xi)\right].
\end{align*}
Substituting these into \eqref{AA396}, we obtain \eqref{AA3}. Moreover, by \eqref{AA306}, we also have
\begin{align*}
    \varphi_t(x) &= \frac{\mathbb{E} \left[\e^{-|x - \beta_t \eta|^2 / (2 \ell_t \sigma_t)}\right]}{(2 \pi \ell_t \sigma_t)^{d/2}} 
    = \frac{\e^{-|x|^2 / (2 \ell_t)} \mathbb{E} \left[\e^{(\beta_t \sigma_t |\eta|^2 - \beta_t |x - \eta|^2) / (2 \ell_t \sigma_t)}\right]}{(2 \pi \ell_t \sigma_t)^{d/2}},
\end{align*}
which, by \eqref{HF1}, gives \eqref{AA43}.
\end{proof}

\br
If $\lambda = \epsilon = \gamma = 1$, then $\ell_t = \epsilon \beta_t + \gamma \sigma_t = 1$, and \eqref{AA3} reduces to
$$
    b_t(x) = \frac{\sigma'_t \mathbb{E} \left[\xi\e^{(\beta_t |x - \sqrt{\sigma_t} \xi|^2 + \sigma_t |\xi|^2) / 2} \rho_1(x - \sqrt{\sigma_t} \xi)\right]}{\sqrt{\sigma_t} \mathbb{E} \left[\e^{(\beta_t |x - \sqrt{\sigma_t} \xi|^2 + \sigma_t |\xi|^2) / 2} \rho_1(x - \sqrt{\sigma_t} \xi)\right]}.
$$
\er

\br
By the change of variable $x - y = \sigma_t z$, we also have the alternative expression:
\begin{align}
    b_t(x) &= \frac{\sigma'_t \int_{\mathbb{R}^d} (x - y)\e^{-|x - \beta_t y|^2 / (2 \ell_t \sigma_t)} \rho_1(y) \dif y}{\sigma_t \int_{\mathbb{R}^d}\e^{-|x - \beta_t y|^2 / (2 \ell_t \sigma_t)} \rho_1(y) \dif y} 
    = \frac{\sigma'_t \int_{\mathbb{R}^d} z\e^{-\sigma_t |x + \beta_t z|^2 / (2 \ell_t)} \rho_1(x - \sigma_t z) \dif z}{\int_{\mathbb{R}^d}\e^{-\sigma_t |x + \beta_t z|^2 / (2 \ell_t)} \rho_1(x - \sigma_t z) \dif z}. \label{BTX1}
\end{align}
Although there is no essential difference between \eqref{BTX1} and \eqref{AA3}, the Monte Carlo method for calculating the integral strongly depends on the form of the above expression (see Section 4).
\er

\bl[Second-order scheme]\label{Le310}
Let $\gamma, \lambda > 0$ and $\xi_0 \sim N(0, \gamma \mathbb{I}_d)$, $\xi \sim N(0, \mathbb{I}_d / \sqrt{\lambda})$. Suppose that $\eta \sim \rho_1(x) \dif x$ with $\rho_1 \in C^1(\mathbb{R}^d)$. Then, for each $t \in (0, 1)$ and $x \in \mathbb{R}^d$,
\begin{align}\label{AA112}
    b_t(x) = (\log \beta_t)' \left(x + \frac{\ell_t \mathbb{E} \left[\e^{\beta_t |x - \sqrt{\ell_t \sigma_t} \xi|^2 / (2 \ell_t) + (\lambda - \beta_t) |\xi|^2 / 2} \nabla \rho_1(x - \sqrt{\ell_t \sigma_t} \xi)\right]}{\beta_t \mathbb{E} \left[\e^{\beta_t |x - \sqrt{\ell_t \sigma_t} \xi|^2 / (2 \ell_t) + (\lambda - \beta_t) |\xi|^2 / 2} \rho_1(x - \sqrt{\ell_t \sigma_t} \xi)\right]}\right),
\end{align}
where $\eps>0$ and $\ell_t := \epsilon \beta_t + \gamma \sigma_t$.
\el

\begin{proof}
By \eqref{AA30} and \eqref{HF1}, we have
$$
    b_t(x) = \ell_t (\log \beta_t)' \frac{\nabla_x \mathbb{E} \left[\e^{\beta_t |x - \sqrt{\ell_t \sigma_t} \xi|^2 / (2 \ell_t) + (\lambda - \beta_t) |\xi|^2 / 2} \rho_1(x - \sqrt{\ell_t \sigma_t} \xi)\right]}{\mathbb{E} \left[\e^{\beta_t |x - \sqrt{\ell_t \sigma_t} \xi|^2 / (2 \ell_t) + (\lambda - \beta_t) |\xi|^2 / 2} \rho_1(x - \sqrt{\ell_t \sigma_t} \xi)\right]}.
$$
Noting that
\begin{align*}
    &\nabla_x \mathbb{E} \left[\e^{\beta_t |x - \sqrt{\ell_t \sigma_t} \xi|^2 / (2 \ell_t) + (\lambda - \beta_t) |\xi|^2 / 2} \rho_1(x - \sqrt{\ell_t \sigma_t} \xi)\right] \\
    &\quad = \frac{\beta_t}{\ell_t} \mathbb{E} \left[(x - \sqrt{\ell_t \sigma_t} \xi)\e^{\beta_t |x - \sqrt{\ell_t \sigma_t} \xi|^2 / (2 \ell_t) + (\lambda - \beta_t) |\xi|^2 / 2} \rho_1(x - \sqrt{\ell_t \sigma_t} \xi)\right] \\
    &\qquad + \mathbb{E} \left[\e^{\beta_t |x - \sqrt{\ell_t \sigma_t} \xi|^2 / (2 \ell_t) + (\lambda - \beta_t) |\xi|^2 / 2} \nabla \rho_1(x - \sqrt{\ell_t \sigma_t} \xi)\right],
\end{align*}
by \eqref{AA3} and $\sigma_t = 1 - \beta_t$, we have
\begin{align*}
    b_t(x) = \beta_t' x + \sigma_t b_t(x) + \ell_t (\log \beta_t)' \frac{\mathbb{E} \left[\e^{\beta_t |x - \sqrt{\ell_t \sigma_t} \xi|^2 / (2 \ell_t) + (\lambda - \beta_t) |\xi|^2 / 2} \nabla \rho_1(x - \sqrt{\ell_t \sigma_t} \xi)\right]}{\mathbb{E} \left[\e^{\beta_t |x - \sqrt{\ell_t \sigma_t} \xi|^2 / (2 \ell_t) + (\lambda - \beta_t) |\xi|^2 / 2} \rho_1(x - \sqrt{\ell_t \sigma_t} \xi)\right]},
\end{align*}
which implies \eqref{AA112}. The proof is complete.
\end{proof}

\br
Note that $(\log \beta_t)'$ is not integrable near $0$. However, we can combine the first and second-order expressions to write
$$
    b_t(x) =
    \begin{cases}
        \frac{\sqrt{\ell_t} \sigma'_t \mathbb{E} \left[\xi\e^{\beta_t |x - \sqrt{\ell_t \sigma_t} \xi|^2 / (2 \ell_t) + (\lambda - \beta_t) |\xi|^2 / 2} \rho_1(x - \sqrt{\ell_t \sigma_t} \xi)\right]}{\sqrt{\sigma_t} \mathbb{E} \left[\e^{\beta_t |x - \sqrt{\ell_t \sigma_t} \xi|^2 / (2 \ell_t) + (\lambda - \beta_t) |\xi|^2 / 2} \rho_1(x - \sqrt{\ell_t \sigma_t} \xi)\right]}, & t \in [0, \tfrac{1}{2}), \\
        \frac{\beta_t'}{\beta_t} \left(x + \frac{\ell_t \mathbb{E} \left[\e^{\beta_t |x - \sqrt{\ell_t \sigma_t} \xi|^2 / (2 \ell_t) + (\lambda - \beta_t) |\xi|^2 / 2} \nabla \rho_1(x - \sqrt{\ell_t \sigma_t} \xi)\right]}{\beta_t \mathbb{E} \left[\e^{\beta_t |x - \sqrt{\ell_t \sigma_t} \xi|^2 / (2 \ell_t) + (\lambda - \beta_t) |\xi|^2 / 2} \rho_1(x - \sqrt{\ell_t \sigma_t} \xi)\right]}\right), & t \in [\tfrac{1}{2}, 1].
    \end{cases}
$$
In particular, if $0 \leq \beta_0', \beta_1' < \infty$, then
$$
    b_0(x) = \beta_0' (\mathbb{E} \eta - x), \quad b_1(x) = \beta_1' [x + \nabla \log \rho_1(x)].
$$
\er

\section{Particle Approximations of Stochastic Transport Maps}

Let $\eps, \gamma > 0$ and $\ell_t = \eps \beta_t + \gamma \sigma_t$, where 
$$
\beta'_t > 0, \quad \beta_0 = 0, \quad \beta_1 = 1, \quad \sigma_t + \beta_t = 1.
$$
Consider the following SDE:
\begin{align}\label{SDE90}
    \dif X_t = b_t(X_t) \dif t + \sqrt{\eps \beta'_t} \dif W_t, \quad X_0 = \xi_0 \sim N(0, \gamma \mathbb{I}_d),
\end{align}
with
\begin{align}\label{Ex0}
    b_t(x) := \frac{\sigma'_t \mathbb{E}[(x - \eta) \phi_{\eps \ell_t \sigma_t}(x - \beta_t \eta)]}{\sigma_t \mathbb{E} \phi_{\eps \ell_t \sigma_t}(x - \beta_t \eta)}
    = \frac{\sigma'_t \mathbb{E}[(x - \eta)\e^{-|x - \beta_t \eta|^2 / (2 \eps \ell_t \sigma_t)}]}{\sigma_t \mathbb{E}\e^{-|x - \beta_t \eta|^2 / (2 \eps \ell_t \sigma_t)}}.
\end{align}
It is natural to ask how to simulate the solution $X_t$. Along the time direction, we can use the classical Euler discretization. For the drift $b$, we need to calculate the expectations appearing in the denominator and numerator of $b_t(x)$. This will be done using Monte Carlo methods. We shall discuss the zero-order and first-order schemes and show the convergence rate for the corresponding particle approximations. For the zero-order scheme, when the target distribution has compact support, we can show strong convergence. For the first-order scheme, we show the convergence rate with respect to the total variation distance based on the entropy method.

First, we prepare the following crucial lemma.

\bl\label{Le41}
Let $\bxi^N := (\xi_1, \dots, \xi_N)$ be a sequence of i.i.d. random variables in $\mathbb{R}^d$ with common distribution $\mu$. Let $\xi \sim \mu$ be independent of $\bxi^N$, and let $\cC: \mathbb{R}^d \to [0, \infty)$ be a nonnegative measurable function. Let $p, q \in [1, \infty)$ and $r \in [1, \infty]$ satisfy $\frac{1}{p} = \frac{1}{q} + \frac{1}{r}$. Suppose that 
$$
    \|\cC(\xi)\|_q + \||\bxi|_*^N\|_r < \infty,
$$
where $|\bxi|_*^N := \sup_{i=1,\dots,N} |\xi_i|$. Then there is a constant $c_{p,q} > 0$ such that for all $N \in \mathbb{N}$,
$$
    \left\|\frac{\sum_{i=1}^N (\xi_i \cC(\xi_i))}{\sum_{i=1}^N \cC(\xi_i)} - \frac{\mathbb{E}(\xi \cC(\xi))}{\mathbb{E} \cC(\xi)}\right\|_p
    \leq \frac{c_{p,q} \||\bxi|_*^N\|_r \|\cC(\xi)\|_q}{N^{1 - (1/2) \vee (1/p)} \mathbb{E} \cC(\xi)}.
$$
\el

\begin{proof}
Observing that
\begin{align*}
    &\left|\frac{1}{N} \sum_{i=1}^N (\xi_i \cC(\xi_i)) \mathbb{E} \cC(\xi) - \frac{1}{N} \sum_{i=1}^N \cC(\xi_i) \mathbb{E}(\xi \cC(\xi))\right| \\
    &\quad \leq \left|\left(\frac{1}{N} \sum_{i=1}^N (\xi_i \cC(\xi_i)) - \mathbb{E}(\xi \cC(\xi))\right) \frac{1}{N} \sum_{i=1}^N \cC(\xi_i)\right| \\
    &\qquad + \left|\frac{1}{N} \sum_{i=1}^N (\xi_i \cC(\xi_i)) \left(\frac{1}{N} \sum_{i=1}^N \cC(\xi_i) - \mathbb{E} \cC(\xi)\right)\right|,
\end{align*}
we have
\begin{align*}
    \sI &:= \left|\frac{\sum_{i=1}^N (\xi_i \cC(\xi_i))}{\sum_{i=1}^N \cC(\xi_i)} - \frac{\mathbb{E}(\xi \cC(\xi))}{\mathbb{E} \cC(\xi)}\right| 
    = \left|\frac{\sum_{i=1}^N (\xi_i \cC(\xi_i)) \mathbb{E} \cC(\xi) - \sum_{i=1}^N \cC(\xi_i) \mathbb{E}(\xi \cC(\xi))}{\sum_{i=1}^N \cC(\xi_i) \mathbb{E} \cC(\xi)}\right| \\
    &\leq \frac{\left|\frac{1}{N} \sum_{i=1}^N [\xi_i \cC(\xi_i) - \mathbb{E}(\xi \cC(\xi))]\right|}{\mathbb{E} \cC(\xi)} + \frac{\sup_{i=1,\dots,N} |\xi_i| \left|\frac{1}{N} \sum_{i=1}^N [\cC(\xi_i) - \mathbb{E} \cC(\xi)]\right|}{\mathbb{E} \cC(\xi)}.
\end{align*}
Hence, by Hölder's inequality,
\begin{align*}
    \|\sI\|_p
    &\leq \frac{\left\|\sum_{i=1}^N (\xi_i \cC(\xi_i) - \mathbb{E}(\xi \cC(\xi))\right\|_p}{N \mathbb{E} \cC(\xi)}
    + \frac{\||\bxi|_*^N\|_r \left\|\sum_{i=1}^N (\cC(\xi_i) - \mathbb{E} \cC(\xi))\right\|_q}{N \mathbb{E} \cC(\xi)}.
\end{align*}
On the other hand, for independent random variables $(X_i)_{i=1,\dots,d}$ with zero mean, by Burkh\"older's inequality (see \cite[Theorem  6.3.6]{St11}),
\begin{align}\label{DA2}
    \mathbb{E}\left|\sum_{i=1}^N X_i\right|^p \leq C_p N^{(p/2) \vee 1 - 1} \sum_{i=1}^N \mathbb{E}|X_i|^p, \quad p \in [1, \infty).
\end{align}
Using this inequality, we derive that
$$
    \left\|\frac{\sum_{i=1}^N (\xi_i \cC(\xi_i))}{\sum_{i=1}^N \cC(\xi_i)} - \frac{\mathbb{E}(\xi \cC(\xi))}{\mathbb{E} \cC(\xi)}\right\|_p
    \leq \frac{c_p N^{(1/2) \vee (1/p)} \|\xi \cC(\xi)\|_p + c_r N^{(1/2) \vee (1/q)} \||\bxi|_*^N\|_r \|\cC(\xi)\|_q}{N \mathbb{E} \cC(\xi)},
$$
which yields the desired estimate by $\|\xi \cC(\xi)\|_p \leq \|\xi\|_r \|\cC(\xi)\|_q \leq \||\bxi|_*^N\|_r \|\cC(\xi)\|_q$.
\end{proof}

\br
If $p = 2$, then $\mathbb{E}\left|\sum_{i=1}^N X_i\right|^2 = \sum_{i=1}^N \mathbb{E}|X_i|^2$. In this case, we have
$$
    \left\|\frac{\sum_{i=1}^N (\xi_i \cC(\xi_i))}{\sum_{i=1}^N \cC(\xi_i)} - \frac{\mathbb{E}(\xi \cC(\xi))}{\mathbb{E} \cC(\xi)}\right\|_2
    \leq \frac{2 \||\bxi|_*^N\|_\infty \|\cC(\xi)\|_2}{\sqrt{N} \mathbb{E} \cC(\xi)}.
$$
\er

\subsection{SDE: Zero-Order Particle Approximation}
In this subsection, we construct a particle approximation for $b_t(x)$ for the zero-order scheme \eqref{AA36}. For this purpose, we introduce
$$
    \cC_t(x, y) :=\e^{-|x - \beta_t y|^2 / (2 \ell_t \sigma_t)},
$$
where $\ell_t := \eps \beta_t + \gamma \sigma_t$ and $\eps, \gamma > 0$. Then, by \eqref{AA36} and \eqref{AA30}, we have
\begin{align}\label{HS91}
    b_t(x) = \frac{\sigma'_t \mathbb{E}[(x - \eta) \cC_t(x, \eta)]}{\sigma_t \mathbb{E} \cC_t(x, \eta)}, \quad
    \varphi_t(x) = \frac{\mathbb{E} \cC_t(x, \eta)}{(2 \pi \sigma_t \ell_t)^{d/2}}.
\end{align}
Fix $N \in \mathbb{N}$ and $\by = (y_1, \dots, y_N) \in \mathbb{R}^{Nd}$. We also introduce
\begin{align}\label{HS11}
    b_t^{\by}(x) = \frac{\sigma'_t \sum_{i=1}^N (x - y_i) \cC_t(x, y_i)}{\sigma_t \sum_{i=1}^N \cC_t(x, y_i)}.
\end{align}
Consider the following SDE:
\begin{align}\label{HS1}
    \dif X^{\by}_t = b^{\by}_t(X^\by_t) \dif t + \sqrt{\eps \beta'_t} \dif W_t, \quad X^\by_0 \sim N(0, \gamma \mathbb{I}_d).
\end{align}
Noting that
$$
    \nabla_x b_t^\by(x) = \frac{\sigma'_t}{\sigma_t} \left[\mathbb{I} - \frac{\sum_i [(y_i \otimes y_i) \cC_t(x, y_i)]}{\ell_t \sigma_t \sum_i \cC_t(x, y_i)} + \frac{\left(\sum_i y_i \cC_t(x, y_i)\right)^{\otimes 2}}{\ell_t \sigma_t \left(\sum_i \cC_t(x, y_i)\right)^2}\right],
$$
by $\beta'_t = -\sigma'_t > 0$, we have for $|y_i| \leq K$,
\begin{align}\label{HG1}
    \<z, \nabla_x b_t^{\by}(x) z\> \leq \frac{\beta'_t}{\sigma_t} \left(\sup_{i=1,\dots,N} \frac{\<y_i, z\>^2}{\ell_t \sigma_t} - |z|^2\right)
    \leq \frac{\beta'_t}{\sigma_t} \left(\frac{K^2}{\ell_t \sigma_t} - 1\right) |z|^2.
\end{align}
Hence, SDE \eqref{HS1} admits a unique solution. For simplicity of notation, we write
\begin{align}\label{HS12}
    \cE^N_t(x, \by) := \frac{\mathbb{E}[\xi \cC_t(x, \eta)]}{\mathbb{E} \cC_t(x, \eta)} - \frac{\sum_{i=1}^N [y_i \cC_t(x, y_i)]}{\sum_{i=1}^N \cC_t(x, y_i)}.
\end{align}
We first prove the following lemma.

\bl\label{Le43}
Suppose that $|\eta| \leq K$ and $\mathbb{E} \eta \equiv 0$. Let $\boldsymbol{\eta} = (\eta_1, \dots, \eta_N)$ be $N$-independent copies of $\eta$. For any $p \geq 2$, there is a constant $C_p > 0$ such that for all $s \in (0, 1)$,
$$
    \mathbb{E}|\cE^N_s(X_s, \boldsymbol{\eta})|^p \leq C_p K^p N^{-p/2} \mathbb{E}\e^{(p-1) \beta_s^2 (p |\eta|^2 + \mathbb{E}|\eta|^2) / (2 \ell_s \sigma_s)},
$$
where $X_s$ is the solution of SDE \eqref{SDE0} and $C_2 = 4$.
\el

\begin{proof}
Since $X_s$ and $\boldsymbol{\eta}$ are independent, by Lemma \ref{Le41} and \eqref{HS91}, we have
\begin{align*}
    \mathbb{E}|\cE^N_s(X_s, \boldsymbol{\eta})|^p
    &= \int_{\mathbb{R}^d} \mathbb{E}|\cE^N_s(x, \boldsymbol{\eta})|^p \varphi_s(x) \dif x
    \leq \int_{\mathbb{R}^d} \frac{C_p K^p \mathbb{E}|\cC_s(x, \eta)|^p}{N^{p/2} (\mathbb{E} \cC_s(x, \eta))^p} \varphi_s(x) \dif x \\
    &= \frac{C_p K^p}{N^{p/2} (2 \pi \sigma_s \ell_s)^{d/2}}
    \int_{\mathbb{R}^d} \frac{\mathbb{E}|\cC_s(x, \eta)|^p}{(\mathbb{E} \cC_s(x, \eta))^{p-1}} \dif x,
\end{align*}
where $C_2 = 4$. Noting that by Jensen's inequality,
$$
    \mathbb{E} \cC_s(x, \eta) \geq\e^{-\mathbb{E} |x - \beta_s \eta|^2 / (2 \ell_s \sigma_s)},
$$
we further have
\begin{align*}
    \mathbb{E}|\cE^N_s(X_s, \boldsymbol{\eta})|^p
    &\leq \frac{C_p K^p}{N^{p/2} (2 \pi \sigma_s \ell_s)^{d/2}} \int_{\mathbb{R}^d}
    \mathbb{E}\e^{((p-1) \mathbb{E} |x - \beta_s \eta|^2 - p |x - \beta_s \eta|^2) / (2 \ell_s \sigma_s)} \dif x.
\end{align*}
Noting that by $\mathbb{E} \eta = 0$,
$$
    \frac{(p-1) \mathbb{E} |x - \beta_s \eta|^2 - p |x - \beta_s \eta|^2}{2 \ell_s \sigma_s}
    = \frac{(p-1) \beta_s^2 (p |\eta|^2 + \mathbb{E}|\eta|^2) - |x - p \beta_s \eta|^2}{2 \ell_s \sigma_s},
$$
by Fubini's theorem, we get
$$
    \mathbb{E}|\cE^N_s(X_s, \boldsymbol{\eta})|^p
    \leq C_p K^p N^{-p/2} \mathbb{E}\e^{(p-1) \beta_s^2 (p |\eta|^2 + \mathbb{E}|\eta|^2) / (2 \ell_s \sigma_s)}.
$$
The proof is complete.
\end{proof}

Now we can show the following convergence rate.

\bt\label{Th45}
Suppose that $|\eta| \leq K$ and $\mathbb{E} \eta \equiv 0$. Let $\boldsymbol{\eta} = (\eta_1, \dots, \eta_N)$ be $N$-independent copies of $\eta$ and $\mu^\by_t(\dif x)$ be the law of $X^\by_t$. Then for all $N \in \mathbb{N}$,
$$
\mathbb{E}\Big(\|\mu^\by_t - \mu_t\|_{\var}^2|_{\by = \boldsymbol{\eta}}\Big) \leq \frac{8 K^2}{N} \mathbb{E}\left(\frac{\e^{\beta_t (2 |\eta|^2 + \mathbb{E}|\eta|^2) / (2 \eps \sigma_t)}}{2 |\eta|^2 + \mathbb{E}|\eta|^2}\right), \quad \forall t \in (0, 1).
$$
\et

\begin{proof}
By the classical Csiszár–Kullback–Pinsker (CKP) inequality (see \cite[(22.25)]{Villani2009})
and the entropy formula (see Lemma \ref{Lem34}), we have
\begin{align*}
    \|\mu^\by_t - \mu_t\|_{\var}^2 &\leq 2 \cH(\mu_t | \mu^\by_t)
    = \int^t_0 \frac{\mathbb{E}|b^\by_s(X_s) - b_s(X_s)|^2}{\eps \beta'_s} \dif s.
\end{align*}
Note that by \eqref{HS11}, \eqref{HS12}, and $\sigma_t = 1 - \beta_t$,
$$
    b^\by_t(x) - b_t(x) = \sigma_t' \cE^N_t(x, \by) / \sigma_t = -\beta_t' \cE^N_t(x, \by) / \sigma_t.
$$
Thus, by Lemma \ref{Le43} with $p = 2$, we get
\begin{align*}
    &\mathbb{E}\Big(\|\mu^\by_t - \mu_t\|_{\var}^2|_{\by = \boldsymbol{\eta}}\Big)
    \leq \int^t_0 \frac{\beta'_s}{\eps \sigma_s^2} \mathbb{E}|\cE^N_s(X_s, \boldsymbol{\eta})|^2 \dif s \leq \frac{4 K^2}{\eps N} \int^t_0 \frac{\beta'_s}{\sigma_s^2}
    \mathbb{E}\e^{\beta_s^2 (2 |\eta|^2 + \mathbb{E}|\eta|^2) / (2 \ell_s \sigma_s)} \dif s.
\end{align*}
Noting that for $\kappa = |\eta|^2 + \mathbb{E}|\eta|^2 / 2$,
\begin{align*}
    \int^t_0 \frac{\beta'_s}{\sigma_s^2}
   \e^{\kappa \beta_s^2 / ((\eps \beta_s + \gamma \sigma_s) \sigma_s)} \dif s
    &\leq \int^t_0\e^{\kappa \beta_s / (\eps \sigma_s)} \dif \sigma^{-1}_s
    = \int^{\sigma^{-1}_t}_1\e^{\kappa (x - 1) / \eps} \dif x
    = \frac{\eps}{\kappa} (e^{\kappa \beta_t / (\eps \sigma_t)} - 1),
\end{align*}
we further have
\begin{align*}
    \mathbb{E}\Big(\|\mu^\by_t - \mu_t\|_{\var}^2|_{\by = \boldsymbol{\eta}}\Big)
    \leq \frac{8 K^2}{N} \mathbb{E}\left(\frac{\e^{\beta_t (2 |\eta|^2 + \mathbb{E}|\eta|^2) / (2 \eps \sigma_t)}}{2 |\eta|^2 + \mathbb{E}|\eta|^2}\right).
\end{align*}
The proof is complete.
\end{proof}

Let $\boldsymbol{\eta} = (\eta_1, \dots, \eta_N)$ be $N$-independent copies of $\eta$, and define
$$
    b_t^N(x) = \frac{\sigma'_t \sum_{i=1}^N (x - \eta_i) \cC_t(x, \eta_i)}{\sigma_t \sum_{i=1}^N \cC_t(x, \eta_i)}.
$$
We also consider the following SDE:
$$
    \dif X^N_t = b^N_t(X^N_t) \dif t + \sqrt{\eps \beta'_t} \dif W_t, \quad X^N_0 = \xi_0 \sim N(0, \gamma \mathbb{I}_d).
$$
Clearly, we have
$$
    X^N_t = X^\by_t|_{\by = \boldsymbol{\eta}}.
$$
Let $\mu^N_t$ be the law of $X^N_t$. Under the assumptions of Theorem \ref{Th45}, we have
$$
    \|\mu^N_t - \mu_t\|_{\var}^2 \leq \frac{8 K^2}{N} \mathbb{E}\left(\frac{\e^{\beta_t (2 |\eta|^2 + \mathbb{E}|\eta|^2) / (2 \eps \sigma_t)}}{2 |\eta|^2 + \mathbb{E}|\eta|^2}\right).
$$
In fact, we also have the following strong convergence result.

\bt
Suppose that $|\eta| \leq K$, $\mathbb{E} \eta \equiv 0$, and $0 \leq \eps \leq \gamma$. For any $p \geq 2$, there is a constant $C_p > 0$ such that for all $t \in (0, 1)$ and $N \in \mathbb{N}$,
$$
    \|X^N_t - X_t\|_p \leq C_p \sqrt{\eps}\e^{K^2 \beta_t / (2 \eps \sigma_t)} \Big(\mathbb{E}\e^{(p-1) (p |\eta|^2 + \mathbb{E}|\eta|^2) / (2 \eps \sigma_t)}\Big)^{1/p} / \sqrt{N}.
$$
\et

\begin{proof}
By the chain rule, \eqref{HG1}, and $ab \leq a^2 + b^2 / 4$, we have
\begin{align*}
    \frac{\dif |X^N_t - X_t|^2}{\dif t} &= \<X^N_t - X_t, b^N_t(X^N_t) - b^N_t(X_t)\> + \<X^N_t - X_t, b^N_t(X_t) - b_t(X_t)\> \\
    &\leq \frac{\beta'_t}{\sigma_t} \left(\frac{K^2}{\ell_t \sigma_t} - 1\right) |X^N_t - X_t|^2 + |X^N_t - X_t| |(\log \sigma_t)'| |\cE^N_t(X_t, \boldsymbol{\eta})| \\
    &\leq \frac{K^2 \beta'_t}{\ell_t \sigma^2_t} |X^N_t - X_t|^2 + \frac{\beta_t'}{4 \sigma_t} |\cE^N_t(X_t, \boldsymbol{\eta})|^2.
\end{align*}
Hence, by Gronwall's inequality,
$$
    |X^N_t - X_t|^2 \leq \int^t_0\e^{\int^t_s \frac{K^2 \beta'_r}{\ell_r \sigma^2_r} \dif r} \frac{\beta_s'}{4 \sigma_s} |\cE^N_s(X_s, \boldsymbol{\eta})|^2 \dif s.
$$
Since $0 \leq \eps \leq \gamma$, we have $\ell_s = \eps \beta_s + \gamma \sigma_s \geq \eps$ and
$$
    \int^t_s \frac{K^2 \beta'_r}{\ell_r \sigma^2_r} \dif r \leq \frac{K^2}{\eps} \int^t_s \frac{\beta'_r}{\sigma^2_r} \dif r
    = \frac{K^2}{\eps} \left(\sigma^{-1}_t - \sigma^{-1}_s\right).
$$
For $p \geq 2$, by Minkowski's inequality and Lemma \ref{Le43}, we obtain
\begin{align*}
    \|X^N_t - X_t\|_p^2 &= \||X^N_t - X_t|^2\|_{p/2} \leq\e^{K^2 / (\eps \sigma_t)}
    \int^t_0\e^{-K^2 / (\eps \sigma_s)} \frac{\beta_s'}{\sigma_s} \|\cE^N_s(X_s, \boldsymbol{\eta})\|_p^2 \dif s \\
    &\leq C_p\e^{K^2 / (\eps \sigma_t)} \int^t_0\e^{-K^2 / (\eps \sigma_s)} \frac{K^2 \beta_s'}{N \sigma_s}
    \Big(\mathbb{E}\e^{(p-1) \beta_s^2 (p |\eta|^2 + \mathbb{E}|\eta|^2) / (2 \ell_s \sigma_s)}\Big)^{2/p} \dif s.
\end{align*}
Since $\ell_s \geq \eps$, we further have
\begin{align*}
    \|X^N_t - X_t\|_p^2
    &\leq \frac{C_p K^2\e^{K^2 / (\eps \sigma_t)}}{N} \int^t_0\e^{-K^2 / (\eps \sigma_s)} \frac{\beta_s'}{\sigma_s}
    \dif s \Big(\mathbb{E}\e^{(p-1) (p |\eta|^2 + \mathbb{E}|\eta|^2) / (2 \eps \sigma_t)}\Big)^{2/p}.
\end{align*}
Note that by the change of variable,
\begin{align*}
    \int^t_0\e^{-K^2 / (\eps \sigma_s)} \frac{\beta_s'}{\sigma_s} \dif s
    = \int^{\sigma_t^{-1}}_1 \frac{\e^{-K^2 x / \eps}}{x} \dif x
    \leq \int^{\sigma_t^{-1}}_1\e^{-K^2 x / \eps} \dif x = \frac{\eps}{K^2} (e^{-K^2 / \eps} -\e^{-K^2 / (\eps \sigma_t)}).
\end{align*}
Hence,
\begin{align*}
    \|X^N_t - X_t\|_p^2
    &\leq \frac{C_p \eps\e^{K^2 \beta_t / (\eps \sigma_t)}}{N} \Big(\mathbb{E}\e^{(p-1) (p |\eta|^2 + \mathbb{E}|\eta|^2) / (2 \eps \sigma_t)}\Big)^{2/p}.
\end{align*}
The proof is complete.
\end{proof}

\subsection{SDE: First-Order Particle Approximation}
In this subsection, we construct a particle approximation for $b_t(x)$ for the first-order scheme \eqref{AA3}. Fix $\lambda, \eps, \gamma > 0$ and define
$$
\cC_t(x, y) =\e^{\beta_t |x - \sqrt{\ell_t \sigma_t} y|^2 / (2 \ell_t) + (\lambda - \beta_t) |y|^2 / 2} \rho_1(x - \sqrt{\ell_t \sigma_t} y),
$$
where $\ell_t = \eps \beta_t + \gamma \sigma_t$. Let $\xi \sim N(0, \mathbb{I}_d / \lambda)$. 
By \eqref{AA3} and \eqref{AA43}, we have
\begin{align}\label{AA35}
    b_t(x) &= \frac{\sqrt{\ell_t} \sigma'_t \mathbb{E}[\xi \cC_t(x, \xi)]}{\sqrt{\sigma_t} \mathbb{E} \cC_t(x, \xi)},\ 
    \varphi_t(x) = \frac{\mathbb{E} \cC_t(x, \xi)}{\lambda^{d/2}\e^{|x|^2 / (2 \ell_t)}}.
\end{align}
Let $\by = (y_1, \dots, y_N) \in \mathbb{R}^{Nd}$ be fixed. For $t \in [0, 1)$ and $x \in \mathbb{R}^d$, we introduce
$$
b^\by_t(x) = \frac{\sqrt{\ell_t} \sigma'_t \sum_{i=1}^N [y_i \cC_t(x, y_i)]}{\sqrt{\sigma_t} \sum_{i=1}^N \cC_t(x, y_i)}.
$$
Now we consider the following SDE:
$$
\dif X^\by_t = b^\by_t(X^\by_t) \dif t + \sqrt{\eps \beta'_t} \dif W_t, \quad X^\by_0 \sim N(0, \gamma \mathbb{I}_d).
$$
Clearly,
$$
|b^\by_t(x)| \leq \sqrt{\ell_t / \sigma_t} \beta'_t \sup_i |y_i|.
$$
Suppose that for any $t_0 \in (0, 1)$,
$$
\eps > 0, \quad \inf_{t \in [0, t_0]} \beta'_t > 0.
$$
Since $b^\by$ is bounded measurable, by the well-known results in the theory of SDEs (see \cite{XXZZ20}), the above SDE admits a unique strong solution. We first prepare the following lemma.

\bl
Let $\xi \sim N(0, \mathbb{I}_d / \lambda)$. For any $q \geq 1$, it holds that
\begin{align}\label{BC1}
    \mathbb{E} \cC^q_t(x, \xi)
    &= \int_{\mathbb{R}^d}
    \frac{\e^{(q \beta_t \sigma_t |z|^2 + (q (\lambda - \beta_t) - \lambda) |x - z|^2) / (2 \ell_t \sigma_t)}}{(2 \pi \ell_t \sigma_t / \lambda)^{d/2}} \rho_1^q(z) \dif z,
\end{align}
and for $\eta \sim \rho_1(x) \dif x$ with $\mathbb{E} \eta = 0$,
\begin{align}\label{BC2}
    \mathbb{E} \cC_t(x, \xi) \geq \frac{\e^{-\beta_t (|x|^2 + \beta_t \mathbb{E}|\eta|^2) / (2 \ell_t \sigma_t)}}{(2 \pi \ell_t \sigma_t / \lambda)^{d/2}}.
\end{align}
\el

\begin{proof}
By definition and the change of variable $x - \sqrt{\ell_t \sigma_t} y = z$, we have
\begin{align*}
    \mathbb{E} \cC^q_t(x, \xi)
    &= (2 \pi / \lambda)^{-d/2} \int_{\mathbb{R}^d}
   \e^{q \beta_t |x - \sqrt{\ell_t \sigma_t} y|^2 / (2 \ell_t)} \rho_1^q(x - \sqrt{\ell_t \sigma_t} y)\e^{(q (\lambda - \beta_t) - \lambda) |y|^2 / 2} \dif y \\
    &= (2 \pi \ell_t \sigma_t / \lambda)^{-d/2} \int_{\mathbb{R}^d}\e^{q \beta_t |z|^2 / (2 \ell_t)}
    \rho_1^q(z)\e^{(q (\lambda - \beta_t) - \lambda) |x - z|^2 / (2 \ell_t \sigma_t)} \dif z,
\end{align*}
which gives \eqref{BC1}. On the other hand, for $q = 1$, by Jensen's inequality, we have
$$
    \mathbb{E} \cC_t(x, \xi) = \frac{\mathbb{E}\e^{\beta_t |\eta|^2 / (2 \ell_t) - \beta_t |x - \eta|^2 / (2 \ell_t \sigma_t)}}{(2 \pi \ell_t \sigma_t / \lambda)^{d/2}}
    \geq \frac{\e^{\beta_t \mathbb{E}|\eta|^2 / (2 \ell_t) - \beta_t \mathbb{E}|x - \eta|^2 / (2 \ell_t \sigma_t)}}{(2 \pi \ell_t \sigma_t / \lambda)^{d/2}},
$$
which gives \eqref{BC2} by $\beta_t + \sigma_t = 1$ and $\mathbb{E} \eta = 0$. The proof is complete.
\end{proof}

Now we can show the following convergence rate.

\bt\label{Th47}
Let $0 < \eps \leq \gamma < \infty$. Let $\mu^\by_t(\dif x)$ be the law of $X^\by_t$ and $\mu^N_t$ be the law of $X^\by_t|_{\by = \bxi}$. For $q > 2$ and $\lambda \in (0, 1)$ with $\delta := q - (3q - 4) \lambda > 0$, there is a constant $C_{q,d} > 0$ such that for each $N \in \mathbb{N}$ and $\bxi = (\xi_1, \dots, \xi_N)$ being i.i.d. random variables with common distribution $N(0, \mathbb{I}_d / \lambda)$,
$$
    \|\mu^N_t - \mu_t\|_{\var}^2 \leq
    \mathbb{E}\Big(\|\mu^\by_t - \mu_t\|_{\var}^2|_{\by = \bxi}\Big) \leq 
    C_{q,d} \left(\frac{(\ell_t \sigma_t)^{d(q-1)/q} \beta_t \sV_t^{2/q}}{\eps \sigma_t \delta^{d/q} \lambda^{(3/2 - 2/q) d}}\right) \frac{\ln N}{N}, \quad t \in (0, 1),
$$
where $\ell_t = \eps \beta_t + \gamma \sigma_t$ and
$$
    \sV_t := \int_{\mathbb{R}^d}\e^{q^2 |z|^2 / \ell_t}\e^{q \mathbb{E}|\eta|^2 / (4 \ell_t \sigma_t)} \rho_1^q(z) \dif z.
$$
\et

\begin{proof}
Following the proof of Theorem \ref{Th45}, we have
\begin{align}\label{BG9}
    \mathbb{E}\Big(\|\mu^\by_t - \mu_t\|_{\var}^2|_{\by = \bxi}\Big)
    &\leq \int^t_0 \frac{\beta'_s}{\eps \sigma_s^2} \int_{\mathbb{R}^d} \mathbb{E}|\cE^N_s(x, \bxi)|^2 \varphi_s(x) \dif x \dif s,
\end{align}
where $\varphi_s(x)$ is given by \eqref{BC1} and
$$
    \cE^N_s(x, \bxi) := \frac{\mathbb{E}[\xi \cC_s(x, \xi)]}{\mathbb{E} \cC_s(x, \xi)} - \frac{\sum_{i=1}^N [\xi_i \cC_s(x, \xi_i)]}{\sum_{i=1}^N \cC_s(x, \xi_i)}.
$$
Now, applying Lemma \ref{Le41} with $p = 2$ and $q, r \in (2, \infty)$ with $\frac{1}{2} = \frac{1}{q} + \frac{1}{r}$, and by \eqref{AA35}, we obtain
\begin{align*}
    \mathbb{E}|\cE^N_s(x, \bxi)|^2
    &\lesssim \frac{\||\bxi|_*^N\|^2_r \|\cC_s(x, \xi)\|^2_q}{N (\mathbb{E} \cC_s(x, \xi))^2}
    \lesssim \frac{\ln N \|\cC_s(x, \xi)\|^2_q}{\lambda N (\mathbb{E} \cC_s(x, \xi))^2},
\end{align*}
where the second inequality is due to Lemma \ref{Le82} below. Hence, by \eqref{AA35} and \eqref{BC2},
\begin{align}\label{BC4}
    \int_{\mathbb{R}^d} \mathbb{E}|\cE^N_s(x, \bxi)|^2 \varphi_s(x) \dif x
    &\lesssim \frac{\ln N}{\lambda N} \int_{\mathbb{R}^d} \frac{\|\cC_s(x, \xi)\|^2_q}{(\mathbb{E} \cC_s(x, \xi))^2} \varphi_s(x) \dif x
    \lesssim \frac{\ln N}{\lambda^{d/2} N} \int_{\mathbb{R}^d} \frac{\|\cC_s(x, \xi)\|^2_q}{\e^{h_s(x)}} \dif x,
\end{align}
where
\begin{align*}
    h_s(x) &= (\sigma_s |x|^2 - \beta_s (|x|^2 + \beta_s \mathbb{E}|\eta|^2)) / (2 \ell_s \sigma_s).
\end{align*}
Recalling the Gaussian density $\phi_\sigma(x)$, for $q > 2$, by Jensen's inequality and \eqref{BC1}, we have
\begin{align*}
    \sI_s &:= \left(\int_{\mathbb{R}^d} \|\cC_s(x, \xi)\|^2_q\e^{h_s(x)} \dif x\right)^{q/2}
    \leq \int_{\mathbb{R}^d} \frac{\mathbb{E}|\cC_s(x, \xi)|^q}{\e^{q h_s(x) / 2}} \phi_{\ell_s \sigma_s / \lambda}^{1 - q/2}(x) \dif x \\
    &= \int_{\mathbb{R}^d} \int_{\mathbb{R}^d}
    \frac{\e^{(q \beta_s \sigma_s |z|^2 + a_s |x - z|^2) / (2 \ell_s \sigma_s)
    + (q - 2) \lambda |x|^2 / (4 \ell_s \sigma_s)}}{(2 \pi \ell_s \sigma_s / \lambda)^{(2 - q) d/2}\e^{q h_s(x) / 2}} \rho_1^q(z) \dif z \dif x \\
    &= \int_{\mathbb{R}^d} \frac{\e^{q \beta_s |z|^2 / (2 \ell_s)} \rho_1^q(z)}{(2 \pi \ell_s \sigma_s / \lambda)^{(2 - q) d/2}} \left(\int_{\mathbb{R}^d}
   \e^{A_s(x, z) / (4 \ell_s \sigma_s)} \dif x\right) \dif z,
\end{align*}
where
$$
    a_s := q (\lambda - \beta_s) - \lambda = (q - 1) \lambda - q \beta_s,
$$
and
$$
    A_s(x, z) = 2 a_s |x - z|^2 + q \beta_s (|x|^2 + \beta_s \mathbb{E}|\eta|^2) - q \sigma_s |x|^2 + (q - 2) \lambda |x|^2.
$$
By elementary calculations, we have
\begin{align*}
    A_s(x, z) = -\delta \left|x - 2 a_s z / \delta\right|^2 + 4 |a_s z|^2 / \delta + 2 a_s |z|^2 + q \beta^2_s \mathbb{E}|\eta|^2,
\end{align*}
where $\delta := q - (3q - 4) \lambda > 0$. Hence,
\begin{align*}
    \sI_s &\leq \int_{\mathbb{R}^d} \frac{\e^{q \beta_s |z|^2 / (2 \ell_s)} \rho_1^q(z)}{(2 \pi \ell_s \sigma_s / \lambda)^{(2 - q) d/2}}
    \frac{\e^{(4 a_s^2 |z|^2 / \delta + 2 a_s |z|^2 + q \beta^2_s \mathbb{E}|\eta|^2) / (4 \ell_s \sigma_s)}}{(4 \pi \ell_s \sigma_s / \delta)^{-d/2}} \dif z \\
    &= \left(\frac{(2 \pi \ell_s \sigma_s)^{q - 1}}{\delta \lambda^{q - 2} / 2}\right)^{d/2} \int_{\mathbb{R}^d}
   \e^{g_s |z|^2 / (2 \ell_s \sigma_s)}\e^{q \beta^2_s \mathbb{E}|\eta|^2 / (4 \ell_s \sigma_s)} \rho_1^q(z) \dif z,
\end{align*}
where
$$
    g_s := q \beta_s \sigma_s + 2 a_s^2 / \delta + a_s.
$$
Substituting this into \eqref{BC4}, we obtain
\begin{align*}
    \int_{\mathbb{R}^d} \mathbb{E}|\cE^N_s(x, \bxi)|^2 \varphi_s(x) \dif x
    &\lesssim \frac{\ln N (\ell_s \sigma_s)^{d(q - 1) / q}}{N \lambda^{d/2} (\delta \lambda^{q - 2})^{d/q}} \left(\int_{\mathbb{R}^d}
   \e^{g_s |z|^2 / (2 \ell_s \sigma_s)}\e^{q \beta^2_s \mathbb{E}|\eta|^2 / (4 \ell_s \sigma_s)} \rho_1^q(z) \dif z\right)^{2/q}.
\end{align*}
Noting that 
$$
    a_s \leq q - q \beta_s = q \sigma_s,
$$
we have
\begin{align*}
    g_s &= q \beta_s \sigma_s + a_s^2 + a_s \leq 2 q \sigma_s + q^2 \sigma_s^2 \leq 2 q^2 \sigma_s.
\end{align*}
Hence, for $s \in [0, t]$,
\begin{align*}
    \int_{\mathbb{R}^d} \mathbb{E}|\cE^N_s(x, \bxi)|^2 \varphi_s(x) \dif x
    &\lesssim \frac{\ln N (\ell_s \sigma_s)^{d(q - 1) / q}}{N \delta^{d/q} \lambda^{(3/2 - 2/q) d}} \left(\int_{\mathbb{R}^d}
   \e^{q^2 |z|^2 / \ell_s}\e^{q \mathbb{E}|\eta|^2 / (4 \ell_s \sigma_s)} \rho_1^q(z) \dif z\right)^{2/q},
\end{align*}
which together with \eqref{BG9} yields the desired estimate.
\end{proof}

\section{Application to Sampling from Known Distributions}

In this section, we use the SDE \eqref{SDE90} to sample two typical distributions: high-dimensional anisotropic funnel distributions and Gaussian mixture distributions.

\subsection{High-Dimensional Anisotropic Funnel Distribution}

In this subsection, we sample the high-dimensional anisotropic funnel distribution using the first-order scheme. More precisely, for $x = (x_1, x^*_1) \in \mathbb{R} \times \mathbb{R}^{d-1}$, the density is given by
$$
\rho_1(x) = \frac{1}{\sqrt{2\pi}} \exp\left(-\frac{x_1^2}{2}\right) \left(\frac{1}{\sqrt{2\pi\e^{2\alpha x_1}}}\right)^{d-1} \exp\left(-\frac{|x_1^*|_2^2}{2e^{2\alpha x_1}}\right) = \phi_1(x_1) \phi_{\e^{2\alpha x_1}}(x_1^*),
$$
where the parameter $\alpha > 0$ controls the shape of the funnel, and
$$
\phi_\sigma(x) := (2\pi\sigma)^{-d/2}\e^{-|x|^2/(2\sigma)}.
$$
Now, consider the following first-order particle approximation SDE:
\begin{align}\label{SDE901}
    \dif X^N_t = b^N_t(X^N_t) \dif t + \sqrt{\eps \beta_t'} \dif W_t, \quad X^N_0 \sim N(0, \gamma \mI_d),
\end{align}
where $\eps, \gamma, \lambda > 0$, and for $\ell_t = \eps \beta_t + \gamma \sigma_t$ 
and $\xi_i \sim N(0, \mI_d / \lambda)$ being i.i.d. random variables,
$$
    b_t^N(x) = \frac{\sqrt{\ell_t} \sigma'_t \sum_{i=1}^N \left[ \xi_i \exp\left( \frac{\beta_t |x - \sqrt{\ell_t \sigma_t} \xi_i|^2}{2 \ell_t} + \frac{(\lambda - \beta_t) |\xi_i|^2}{2} \right) \rho_1(x - \sqrt{\ell_t \sigma_t} \xi_i) \right]}{\sqrt{\sigma_t} \sum_{i=1}^N \exp\left( \frac{\beta_t |x - \sqrt{\ell_t \sigma_t} \xi_i|^2}{2 \ell_t} + \frac{(\lambda - \beta_t) |\xi_i|^2}{2} \right) \rho_1(x - \sqrt{\ell_t \sigma_t} \xi_i)}.
$$
The following is the pseudocode of the algorithm based on the above SDE and Euler's scheme.
\begin{algorithm}[H]
\caption{Sampling the anisotropic funnel distribution $f$ via the first order scheme of SDEs}
\label{alg:sde_simulation}
\begin{algorithmic}[1]
\STATE \textbf{Initialize Parameters:}
\STATE $D$ \COMMENT{Spatial dimension}; $N$ \COMMENT{Number of samples for Monte Carlo integration}
\STATE $\epsilon$ \COMMENT{Brownian noise intensity}; $\gamma$ \COMMENT{Initial Gauss noise intensity}
\STATE $\text{num\_samples}$ \COMMENT{Number of samples}; $\text{num\_steps}$ \COMMENT{Number of time steps}

\STATE \textbf{Define Functions:}
\STATE $(\sigma_t,\beta_t) \gets \left(\cos^2\left(\frac{\pi t}{2}\right), \sin^2\left(\frac{\pi t}{2}\right)\right)$ \COMMENT{Time-dependent function}
\STATE $\ell_t \gets \epsilon\beta_t + \gamma \sigma_t$ \COMMENT{Noise intensity function}
\STATE $\rho_1(y)$ \COMMENT{Target density function}; $dt\gets1/\text{num\_steps}$ \COMMENT{Step size}

\STATE \textbf{SDE Simulation:}
\STATE $\text{simulate\_sde\_batch}(\text{num\_steps}, \text{num\_samples})$
\STATE \quad $X \gets \text{randn}(D, \text{num\_samples}) \cdot \sqrt{\gamma}$ \COMMENT{Initialize Gauss noise}
\STATE \quad $(\xi_i)_{i=1,\cdots,N} \gets \text{randn}(D, \text{num\_samples})$ \COMMENT{Random samples}
\STATE \quad \textbf{for} $k = 0$ \textbf{to} $\text{num\_steps-1}$ \textbf{do}
\STATE \quad \quad $t \gets k / \text{num\_steps}$ \COMMENT{Current time}
\STATE \quad \quad $\text{drift} \gets \frac{\sum_{i=1}^N(\xi_i  \e^{\beta_t  |X - \sqrt{\ell_t\sigma_t}
\xi_i|^2/(2 \ell_t)+ \left(1 - \beta_t\right) |\xi_i|^2/2} \cdot \rho_1(X - \sqrt{\ell_t\sigma_t}\xi_i))}
{\sum_{i=1}^N(\e^{\beta_t |X - \sqrt{\ell_t\sigma_t} \xi_i|^2/(2 \ell_t)
+ \left(1 - \beta_t\right) |\xi_i|^2/2} \cdot \rho_1(X - \sqrt{\ell_t\sigma_t}\xi_i))}$ \COMMENT{Compute drift}
\STATE \quad \quad $\text{noise} \gets \text{randn\_like}(X)$ \COMMENT{Random noise}
\STATE \quad \quad $X \gets X + \sqrt{\ell_t/\sigma_t}\sigma'_t \cdot \text{drift} \cdot dt + \sqrt{\epsilon\beta_t' dt} \cdot \text{noise}$ \COMMENT{Update samples}
\STATE \quad \textbf{end for}
\STATE \quad \textbf{return} $X$ \COMMENT{Final samples}
\end{algorithmic}
\end{algorithm}
Using the above Algorithm, where the parameters are choosen as follows:
$$
\beta_t=\sin^2\left(\frac{\pi t}{2}\right),\ \ N=100,000,\ \ \eps=1,\ \ \gamma=2,\ \ {\rm num\_steps}=100,
$$
we generated 10,000 sample points and plot the histogram below 
when the dimensions are 2, 7, 10, 15, and  20. For the high-dimensional case $d \geq 3$, we project the sampling points onto the first two coordinates. The figure below presents the sampling results, where the left panel shows the true contour plot of the target distribution, and the right panel displays the histogram of the sampled points for different dimensions.
\begin{figure}[H]
    \centering
    \begin{minipage}[b]{0.3\textwidth}
        \centering
        \includegraphics[width=1.8in, height=1.3in]{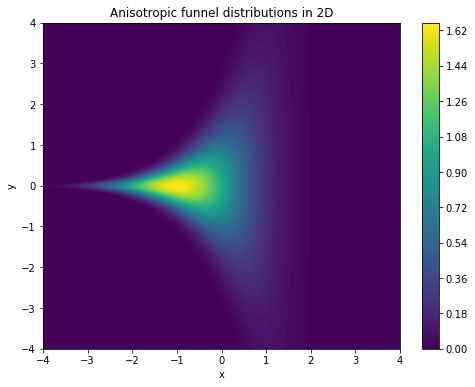}
    \end{minipage}
    \begin{minipage}[b]{0.3\textwidth}
        \centering
        \includegraphics[width=1.8in, height=1.3in]{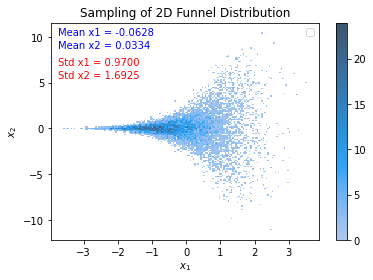}
    \end{minipage}
    \begin{minipage}[b]{0.3\textwidth}
        \centering
        \includegraphics[width=1.8in, height=1.3in]{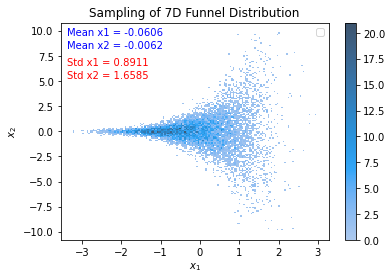}
    \end{minipage}
 
    \begin{minipage}[b]{0.3\textwidth}
        \centering
        \includegraphics[width=1.8in, height=1.3in]{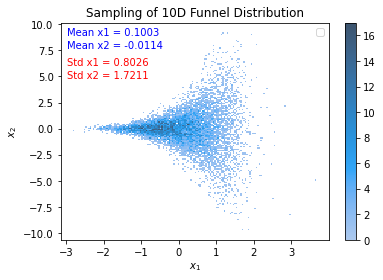}
    \end{minipage}
    \begin{minipage}[b]{0.3\textwidth}
        \centering
        \includegraphics[width=1.8in, height=1.3in]{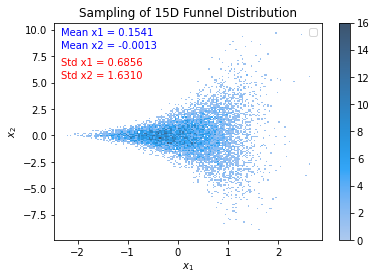}
    \end{minipage}
    \begin{minipage}[b]{0.3\textwidth}
        \centering
        \includegraphics[width=1.8in, height=1.3in]{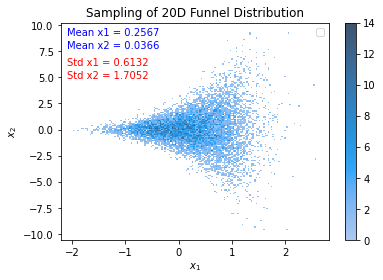}
    \end{minipage}
    
    \caption{Sampling of  high dimensional funnel distribution}
    \label{fig:2D5}
\end{figure}

From the above sampling results, it is evident that as the dimension increases, the performance deteriorates. This is primarily due to the use of Monte Carlo methods to calculate both the numerator and denominator. Since 
$\rho_1(x)$ contains Gaussian distributions, as shown in \cite{Zh24}, 
we can compute a more explicit expression. 
Indeed, by  \eqref{Ex0} and $\rho_1(y)=\phi_1(y_1)\phi_{\e^{2\alpha y_1}}(y_1^*)$, we have
\begin{align*}
b_t(x)&=\frac{\sigma'_t\int_{\mR^d}(x-y) \phi_{\ell_t\sigma_t}(x-\beta_ty)
\phi_1(y_1)\phi_{\e^{2\alpha y_1}}(y_1^*)\dif y}
{\sigma_t\int_{\mR^d}\phi_{\ell_t\sigma_t}(x-\beta_ty)\phi_1(y_1)\phi_{\e^{2\alpha y_1}}(y_1^*)\dif y}.
\end{align*} 
Noting that by Lemma \ref{Le81} below with $\fm=0$ and $\Sigma=\e^{2\alpha y_1}\mI_{d-1}$,
$$
\int_{\mR^{d-1}} \phi_{\ell_t\sigma_t}(x_1^*-\beta_ty_1^*)\phi_{\e^{2\alpha y_1}}(y_1^*)\dif y_1^*
=\phi_{\ell_t\sigma_t+\beta_t^2\e^{2\alpha y_1}}(x_1^*),
$$
and
$$
\int_{\mR^{d-1}}y_1^* \phi_{\ell_t\sigma_t}(x_1^*-\beta_ty_1^*)\phi_{\e^{2\alpha y_1}}(y_1^*)\dif y_1^*
=\frac{\beta_t\e^{2\alpha y_1} x_1^*}{\ell_t\sigma_t+\beta_t^2\e^{2\alpha y_1}}\phi_{\ell_t\sigma_t+\beta_t^2\e^{2\alpha y_1}}(x_1^*),
$$
letting $\xi\sim N(0,1)$, by Fubini's theorem, we have for $x=(x_1,x_1^*)\in\mR\times\mR^{d-1}$,
\begin{align*}
&\int_{\mR^d}(x-y) \phi_{\ell_t\sigma_t}(x-\beta_ty) \phi_1(y_1)\phi_{\e^{2\alpha y_1}}(y_1^*)\dif y\\
&\quad=\int_{\mR}\left(x_1-y_1,x_1^*-\frac{\beta_t\e^{2\alpha y_1} x_1^*}{\ell_t\sigma_t+\beta_t^2\e^{2\alpha y_1}}\right)
\phi_{\ell_t\sigma_t}(x_1-\beta_ty_1) \phi_1(y_1)\phi_{\ell_t\sigma_t+\beta_t^2\e^{2\alpha y_1}}(x_1^*)\dif y_1\\
&\quad=\mE\left[\left(x_1-\xi,\frac{\sigma_t(\ell_t-\beta_t\e^{2\alpha\xi})}{\ell_t\sigma_t+\beta_t^2\e^{2\alpha\xi}}x_1^*\right)
\phi_{\ell_t\sigma_t}(x_1-\beta_t\xi)\phi_{\ell_t\sigma_t+\beta_t^2\e^{2\alpha \xi}}(x_1^*)\right],
\end{align*}
and
$$
\int_{\mR^d}\phi_{\ell_t\sigma_t}(x-\beta_ty)\phi_1(y_1)\phi_{\e^{2\alpha y_1}}(y_1^*)\dif y
=\mE\left[\phi_{\ell_t\sigma_t}(x_1-\beta_t\xi)\phi_{\ell_t\sigma_t+\beta_t^2\e^{2\alpha \xi}}(x_1^*)\right].
$$
Hence,
$$
b_t(x)=\frac{\sigma'_t\mE\left[\left(\frac{x_1-\xi}{\sigma_t},\frac{\ell_t-\beta_t\e^{2\alpha\xi} }{\ell_t\sigma_t+\beta_t^2\e^{2\alpha\xi}}x_1^*\right)\phi_{\ell_t\sigma_t}(x_1-\beta_t\xi)\phi_{\ell_t\sigma_t+\beta_t^2\e^{2\alpha \xi}}(x_1^*)\right]}
{\mE\left[\phi_{\ell_t\sigma_t}(x_1-\beta_t\xi)\phi_{\ell_t\sigma_t+\beta_t^2\e^{2\alpha \xi}}(x_1^*)\right]}.
$$
Now we consider the following SDE
$$
\dif X_t=b_t(X_t)\dif t+\sqrt{\eps\beta'_t}\dif W_t,\ \ X_0\sim N(0,\mI_d).
$$
Using this SDE, one can devise a similar algorithm for sampling from $f$. Here are the simulated results.
Since this expression only contains the integral with respect to the first variable, it appears to work well for high-dimensional cases.
\begin{figure}[H]
    \centering
    \begin{minipage}[b]{0.3\textwidth}
        \centering
        \includegraphics[width=1.8in, height=1.3in]{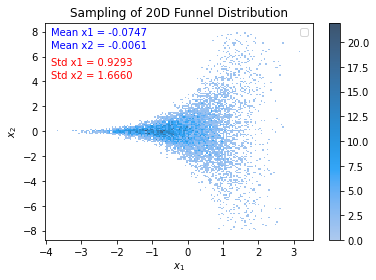}
    \end{minipage}
    \begin{minipage}[b]{0.3\textwidth}
        \centering
        \includegraphics[width=1.8in, height=1.3in]{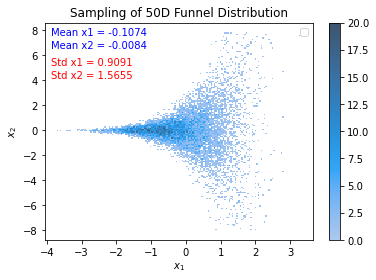}
    \end{minipage}
    \begin{minipage}[b]{0.3\textwidth}
        \centering
        \includegraphics[width=1.8in, height=1.3in]{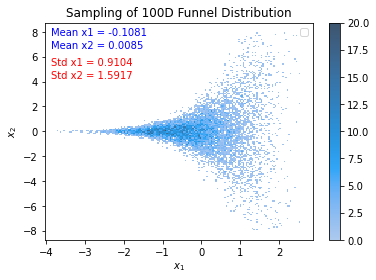}
    \end{minipage}
    \caption{Sampling  of 20D, 50D, 100D-funnel distributions}
\end{figure}

\subsection{Sampling Gaussian Mixture Distribution}
Let $n\in\mN$ be the number of Gaussian distributions.
We consider the following Gaussian  mixture distribution:
$$
\rho_1(x)=\sum_{i=1}^nw_i\sN(x|\fm_i,\Sigma_i),
$$
where $\fm_i\in\mR^d$ is the mean value and $\Sigma_i$ is the $d$-dimensional covariance matrix, and
$w_i\in(0,1)$ are the weights with $w_1+w_2+\cdots+w_n=1$.
Consider the zero order scheme in \eqref{AA36}
$$
b_t(x)=\frac{\sigma_t'\int_{\mR^d}(x-y)\e^{-|x-\beta_t y|^2/(2\ell_t\sigma_t)} \rho_1(y)\dif y}
{\sigma_t\int_{\mR^d}\e^{-|x-\beta_t y|^2/(2\ell_t\sigma_t)} \rho_1(y)\dif y}
=\frac{\sigma_t'\sum_iw_i\mE[(x-\xi_i)\e^{-|x-\beta_t \xi_i|^2/(2\ell_t\sigma_t)}]}
{\sigma_t\sum_iw_i\mE\e^{-|x-\beta_t \xi_i|^2/(2\ell_t\sigma_t)}},
$$
where $\ell_t:=\eps\beta_t+\gamma\sigma_t$ with $\eps,\gamma>0$ and
$$
\xi_i\sim N(\fm_i,\Sigma_i). 
$$
By Lemma \ref{Le81}, we can write
\begin{align*}
b_t(x)&=\frac{\sigma'_t\sum_i w_i\sN(x|\beta_t\fm_i, A_i)v_i(x)}
{\sum_iw_i\sN(x|\beta_t\fm_i, A_i)}
=\frac{\sigma'_t\sum_i w_i\e^{(-\log\det(A_i)-\<A^{-1}_i(x-\beta_t\fm_i), x-\beta_t\fm_i\>)/2}v_i(x)}
{\sum_iw_i\e^{(-\log\det(A_i)-\<A^{-1}_i(x-\beta_t\fm_i), x-\beta_t\fm_i\>)/2}},
\end{align*}
where $A_i=\ell_t\sigma_t \mI_d+\beta_t^2\Sigma_i$ and
$$
v_i(x):=(x-A^{-1}_i(\beta_t\Sigma_ix+\ell_t\sigma_t\fm_i))/\sigma_t
=A^{-1}_i[(\ell_t\mI_d-\beta_t\Sigma_i)x-\ell_t\fm_i].
$$
Thus, we can sample the Gaussian mixture using the following SDE and its Euler scheme:
$$
\dif X_t=\frac{\sigma'_t\sum_i w_i\sN(X_t|\beta_t\fm_i, A_i)v_i(X_t)}
{\sum_iw_i\sN(X_t|\beta_t\fm_i, A_i)}\dif t+\sqrt{\eps\beta'_t}\dif W_t,\ \ X_0\sim N(0,\gamma\mI_d).
$$
\noindent
{\bf Example 1: $n=1$.} Let $\eta\sim N(\fm,\Sigma)$. In this case, from the above expression we have
$$
b_t(x)=\sigma_t'( \ell_t\sigma_t\mI_d+\beta^2_t\Sigma)^{-1}
\left[(\ell_t\mI_d-\beta_t\Sigma)x-\ell_t \fm\right].
$$
Now we consider the following linear vector-valued SDE
$$
\dif X_t=\sigma_t'( \ell_t\sigma_t\mI_d+\beta^2_t\Sigma)^{-1}
\left[(\ell_t\mI_d-\beta_t\Sigma)X_t-\ell_t \fm\right]\dif t+\sqrt{\eps\beta'_t}\dif W_t,\ \ X_0\sim N(0,\gamma\mI_d).
$$
Using this SDE, we can sample from a normal distribution even when the covariance matrix $\Sigma$ is potentially degenerate.  
We set $d = 100$, $\fm = 0$, and let $\Sigma$ be a degenerate covariance matrix. We generated 2,000 sample points and plotted their scatter plots along the $(0, 1)$-dimension and $(23, 54)$-dimension, as shown below.
In the plots, the red circular line represents the true Gaussian contour, and the yellow areas indicate the covariance matrix for the corresponding two dimensions.
\begin{figure}[H]
    \centering
    \begin{minipage}[b]{0.42\textwidth}
        \centering
        \includegraphics[width=2.5in, height=1.3in]{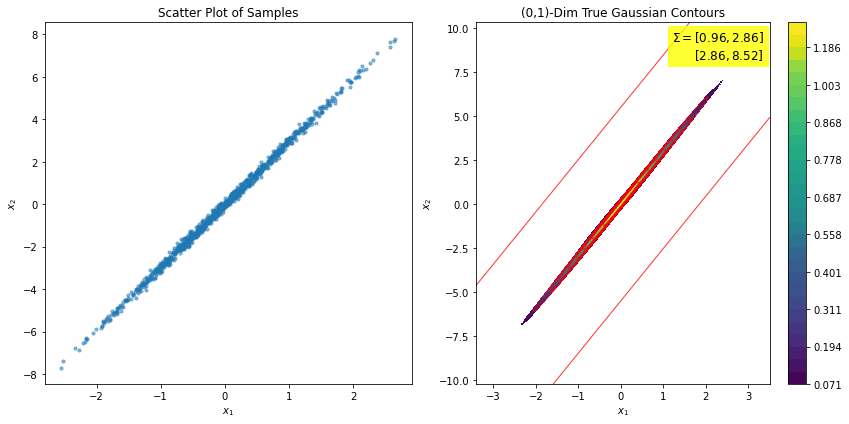}
    \end{minipage}
    \begin{minipage}[b]{0.42\textwidth}
        \centering
        \includegraphics[width=2.5in, height=1.3in]{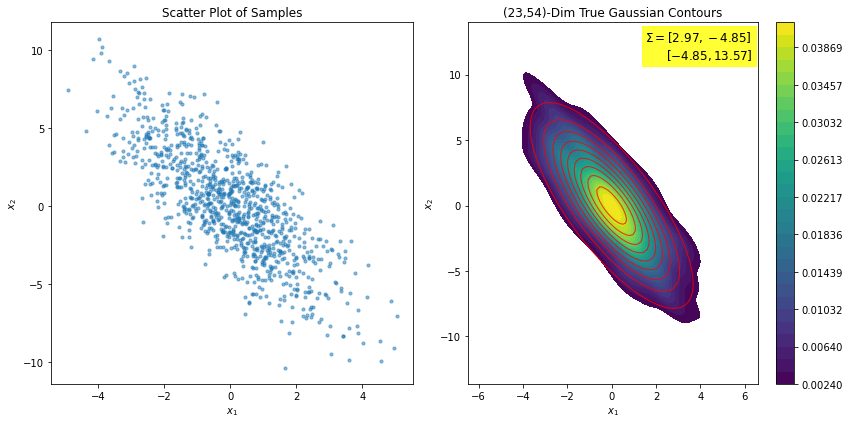}
    \end{minipage}
    \caption{Sampling of Gaussian distribution with degenerate covariance matrix}
\end{figure}

\noindent{\bf Example 2: $n>1$ and $\Sigma_i={\rm Diag}(\alpha_{i1},\cdots,\alpha_{id})$.}  
In this case for $i=1,\cdots,n$, we have 
$$
A_i={\rm Diag}\left(\ell_t\sigma_t+\beta_t^2\alpha_{i1},\cdots,\ell_t\sigma_t+\beta_t^2\alpha_{id}\right),\ \ 
\det(A_i)=\Pi_{j=1}^d(\ell_t\sigma_t+\beta^2_t\alpha_{ij}),
$$
and
\begin{align*}
v_i(x)=\left(\tfrac{(\ell_t-\beta_t\alpha_{i1})x_1-\ell_t\fm_{i1}}{\ell_t\sigma_t+\beta_t^2\alpha_{i1}},\cdots,
\tfrac{(\ell_t-\beta_t\alpha_{id})x_d-\ell_t\fm_{id}}{\ell_t\sigma_t+\beta_t^2\alpha_{id}}\right).
\end{align*}
Thus,
$$
\<A^{-1}_i(x-\beta_t\fm_i), x-\beta_t\fm_i\>
=\sum_{j=1}^d\frac{(x_j-\beta_t\fm_{ij})^2}{\ell_t\sigma_t+\beta_t^2\alpha_{ij}}.
$$
Hence,
\begin{align*}
b_t(x)=\frac{\sigma_t'\sum_iw_i\exp\left\{-\sum_{j=1}^d\left(\frac{(x_j-\beta_t\fm_{ij})^2}{2(\ell_t\sigma_t+\beta_t^2\alpha_{ij})}
+\frac{\log(\ell_t\sigma_t+\beta_t^2\alpha_{ij}))}2\right)\right\}v_i(x)}
{\sum_iw_i\exp\left\{-\sum_{j=1}^d\left(\frac{(x_j-\beta_t\fm_{ij})^2}{2(\ell_t\sigma_t+\beta_t^2\alpha_{ij})}+\frac{\log(\ell_t\sigma_t+\beta_t^2\alpha_{ij})}2\right)\right\}}.
\end{align*}
Using the above drift $b$, we can sample from a Gaussian mixture distribution.  
We set $d =n= 100$, $\fm = 0$, and let $\Sigma_i$ be a diagonal matrix with $\alpha_{ij}\in(0,0.005)$. 
We generated 2,000 sample points and plotted their scatter plots along the $(20, 10)$-dimension and $(3, 0)$-dimension, 
as shown below.
\begin{figure}[H]
    \centering
    \begin{minipage}[b]{0.42\textwidth}
        \centering
        \includegraphics[width=2in, height=1.3in]{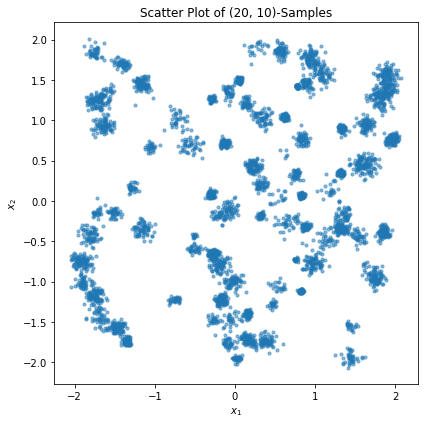}
    \end{minipage}
    \begin{minipage}[b]{0.42\textwidth}
        \centering
        \includegraphics[width=2in, height=1.3in]{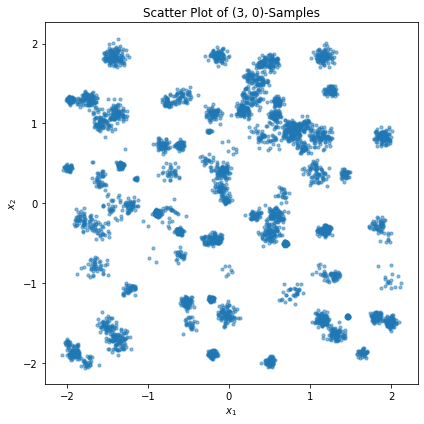}
    \end{minipage}
    \caption{Sampling of 100D Gaussian mixture distribution with 100-components}
\end{figure}

\section{Appendix}

We need the following simple lemma.

\bl\label{Le81}
Let $\fm \in \mathbb{R}^d$ and $\Sigma$ be a $d \times d$-symmetric and non-negative definite matrix. Suppose that $\xi \sim N(\fm, \Sigma)$. Then for any $\beta, \sigma > 0$ and $a \in \mathbb{R}, b \in \mathbb{R}^d$, we have
$$
\mathbb{E}\left[(a\xi + b) \phi_\sigma(x - \beta \xi)\right]= \sN(x|\beta\fm,A)\left[a A^{-1} (\beta \Sigma x + \sigma \fm) + b\right],
$$
where $\sN(x|\fm,\Sigma)$ is defined by \eqref{Nor1}, and
$$
\phi_\sigma(x) := (2\pi\sigma)^{-d/2}\e^{-|x|^2 / (2\sigma)}, \quad A := \sigma \mathbb{I}_d + \beta^2 \Sigma.
$$
\el

\begin{proof}
Without loss of generality, we may assume $\det(\Sigma) > 0$. Otherwise, we may consider $\Sigma_\eps = \Sigma + \eps \mathbb{I}_d$. By definition and the change of variable, we have
\begin{align*}
\mathbb{E}\left[(a\xi + b) \phi_\sigma(x - \beta \xi)\right]
&= \int_{\mathbb{R}^d} (a y + b) \frac{\e^{-|x - \beta y|^2 / (2\sigma)}\e^{-\langle \Sigma^{-1}(y - \fm), y - \fm \rangle / 2}}{(2\pi)^d \sqrt{\sigma^d \det(\Sigma)}} \dif y \\
&= \int_{\mathbb{R}^d} (a y + a \fm + b) \frac{\e^{-|x - \beta(y + \fm)|^2 / (2\sigma) - \langle \Sigma^{-1} y, y \rangle / 2}}{(2\pi)^d \sqrt{\sigma^d \det(\Sigma)}} \dif y.
\end{align*}
Note that
\begin{align*}
|x - \beta(y + \fm)|^2 + \sigma \langle \Sigma^{-1} y, y \rangle
&= |x - \beta \fm|^2 + 2 \beta \langle x - \beta \fm, y \rangle + \beta^2 |y|^2 + \sigma \langle \Sigma^{-1} y, y \rangle \\
&= |x - \beta \fm|^2 + 2 \beta \langle x - \beta \fm, y \rangle + \langle B y, y \rangle \\
&= |x - \beta \fm|^2 + \langle B(y - v), (y - v) \rangle - \langle B v, v \rangle,
\end{align*}
where
$$
B := \sigma \Sigma^{-1} + \beta^2 \mathbb{I}_d, \quad v = \beta B^{-1}(x - \beta \fm).
$$
Hence,
\begin{align*}
\mathbb{E}\left[(a\xi + b)\e^{-|x - \beta \xi|^2 / (2\sigma)}\right]
&= \frac{\e^{(\langle B v, v \rangle - |x - \beta \fm|^2) / (2\sigma)}}{(2\pi)^d \sqrt{\sigma^d \det(\Sigma)}}
\int_{\mathbb{R}^d} (a y + a \fm + b)\e^{-\langle B(y - v), (y - v) \rangle / (2\sigma)} \dif y \\
&= \frac{\e^{(\langle B v, v \rangle - |x - \beta \fm|^2) / (2\sigma)}}{(2\pi)^d \sqrt{\sigma^d \det(\Sigma)}}
(a v + a \fm + b) \int_{\mathbb{R}^d}\e^{-\langle B y, y \rangle / (2\sigma)} \dif y \\
&= \frac{\e^{(\langle B v, v \rangle - |x - \beta \fm|^2) / (2\sigma)}}{\sqrt{(2\pi)^d \det(\Sigma) \det(B)}} (a v + a \fm + b).
\end{align*}
Finally, noting that
$$
v = \beta (\sigma \Sigma^{-1} + \beta^2 \mathbb{I}_d)^{-1} (x - \beta \fm)
= \beta (\sigma \mathbb{I}_d + \beta^2 \Sigma)^{-1} \Sigma (x - \beta \fm),
$$
we have
\begin{align*}
\langle B v, v \rangle - |x - \beta \fm|^2
&= \langle x - \beta \fm, [\beta^2 (\sigma \mathbb{I}_d + \beta^2 \Sigma)^{-1} \Sigma - \mathbb{I}_d] (x - \beta \fm) \rangle \\
&= -\sigma \langle x - \beta \fm, (\sigma \mathbb{I}_d + \beta^2 \Sigma)^{-1} (x - \beta \fm) \rangle
\end{align*}
and
$$
v + \fm = \beta (\sigma \mathbb{I}_d + \beta^2 \Sigma)^{-1} \Sigma (x - \beta \fm) + \fm
= (\sigma \mathbb{I}_d + \beta^2 \Sigma)^{-1} (\beta \Sigma x + \sigma \fm).
$$
The desired formula now follows.
\end{proof}

We need the following well-known results. For the readers' convenience, we provide a detailed proof.

\bl\label{Le82}
Let $\bxi^N := (\xi_1, \dots, \xi_N)$ be a sequence of i.i.d. random variables in $\mathbb{R}^d$ with common distribution $N(0, \mathbb{I}_d)$. For any $p \geq 1$, there is a constant $C_{p,d} > 0$ such that
$$
\left\||\bxi|_*^N\right\|_p \leq C_{p,d} \sqrt{\ln N},
$$
where $|\bxi|_*^N := \sup_{i=1,\dots,N} |\xi_i|$.
\el

\begin{proof}
First of all, for $t > 0$, we have
$$
\mathbb{P}(|\bxi|_*^N \leq t) = \mathbb{P}\left(\sup_{i=1,\dots,N} |\xi_i| \leq t\right) = \mathbb{P}\left(|\xi_1| \leq t\right)^N.
$$
Hence,
$$
\||\bxi|_*^N\|^p_p = \int_0^\infty \mathbb{P}(|\bxi|_*^N > t) \dif t^p = \int_0^\infty \left(1 - \mathbb{P}\left(|\xi_1| \leq t\right)^N\right) \dif t^p.
$$
On the other hand, using the Chernoff bound for chi-squared random variables, we know
\begin{align*}
\mathbb{P}(|\xi_1| > t) 
&\leq \left(\inf_{s > 0}\e^{-s t^2} \mathbb{E}\e^{s |\xi|_1^2}\right) \wedge 1 \\
&= \left(\inf_{s \in (0, \frac{1}{2})} (\e^{-s t^2} (1 - 2s)^{-d})\right) \wedge 1 \\
&\leq \exp\left(-\frac{t^2 - d}{2} - \frac{d}{2} \log\left(\frac{t^2}{d}\right)\right), \quad t^2 > d.
\end{align*}
For $t^2 > 4d$, it is easy to see that
$$
\inf_{s \in (0, \frac{1}{2})} (\e^{-s t^2} (1 - 2s)^{-d}) = \left(\frac{t^2}{2d}\right)^d\e^{-t^2 / 2 + d}.
$$
In particular, for $t^2 > 4d$ and some $c_0 > 0$,
$$
\mathbb{P}(|\xi_1| > t) \leq \left(\frac{t^2}{2d}\right)^d\e^{-t^2 / 2 + d} \leq\e^{-c_0 t^2}, \quad t^2 > 4d.
$$
Hence,
$$
\||\bxi|_*^N\|^p_p \leq \int_0^{2 \sqrt{d}} \dif t^p + \int^\infty_{2 \sqrt{d}} \left(1 - \left(1 -\e^{-c_0 t^2}\right)^N\right) \dif t^p \leq (2 \sqrt{d})^p + \sI,
$$
where
$$
\sI := \int^\infty_0 \left(1 - \left(1 -\e^{-c_0 t^2}\right)^N\right) \dif t^p.
$$
For $\sI$, by the change of variable $1 -\e^{-c_0 t^2} = x$, we have
\begin{align*}
\sI &= \int^1_0 \left(1 - x^N\right) \dif \left(\frac{\ln(1 - x)^{-1}}{c_0}\right)^{p/2} \\
&= N \int^1_0 \left(\frac{\ln(1 - x)^{-1}}{c_0}\right)^{p/2} x^{N-1} \dif x =: \sI_1 + \sI_2,
\end{align*}
where
$$
\sI_1 := N \int^1_{1/N} \left(\frac{\ln x^{-1}}{c_0}\right)^{p/2} (1 - x)^{N-1} \dif x
$$
and
$$
\sI_2 := N \int^{1/N}_0 \left(\frac{\ln x^{-1}}{c_0}\right)^{p/2} (1 - x)^{N-1} \dif x.
$$
For $\sI_1$, we have
$$
\sI_1 \leq \left(\frac{\ln N}{c_0}\right)^{p/2} N \int^1_{1/N} (1 - x)^{N-1} \dif x
= \left(\frac{\ln N}{c_0}\right)^{p/2} (1 - 1/N)^N \leq \frac{(\ln N)^{p/2}}{c_0^{p/2} \e}.
$$
For $\sI_2$, we have
\begin{align*}
\sI_2 &\leq N c_0^{-p/2} \int^{1/N}_0 \left(\ln x^{-1}\right)^{p/2} \dif x = N c_0^{-p/2} \int^\infty_{\ln N} t^{p/2}\e^{-t} \dif t \\
&= N c_0^{-p/2} \left[(\ln N)^{p/2}\e^{-\ln N} + \frac{p}{2} \int^\infty_{\ln N} t^{p/2 - 1}\e^{-t} \dif t\right] \\
&= c_0^{-p/2} (\ln N)^{p/2} + N c_0^{-p/2} \frac{p}{2} (\ln N)^{p/2 - 1} + \cdots \leq C (\ln N)^{p/2},
\end{align*}
where $C=C_{p,d}>0$.
Combining the above calculations, we obtain the desired estimate.
\end{proof}

We consider the following SDE:
\begin{align}\label{SDE9}
\dif X^i_t = b^i_t(X^i_t) \dif t + \sqrt{2 a_t} \dif W_t, \quad i = 1, 2,
\end{align}
where $a_t > 0$ and $b^i: \mathbb{R}_+ \times \mathbb{R}^d \to \mathbb{R}^d$ are two Borel measurable functions. We recall the following entropy formula for the solutions of classical SDEs, which is a consequence of Girsanov's theorem (see \cite[Lemma 4.4 and Remark 4.5]{La21} for the most general form).

\bl\label{Lem34}
Suppose that for each $i = 1, 2$, there is a unique solution to SDE \eqref{SDE9}. Let $\mu^i_t = \mathbb{P} \circ (X^i_t)^{-1}$ be the time marginal distribution. Then
$$
\cH(\mu^1_t | \mu^2_t) = \cH(\mu^1_0 | \mu^2_0) + \mathbb{E}\left(\int^t_0 a^{-1}_s |b^1_s(X^1_s) - b^2_s(X_s^1)|^2 \dif s\right),
$$
where $\cH(\mu^1_t|\mu^2_t)$ is the relative entropy defined by \eqref{Rel1}.
\el

\vspace{5mm}

{\bf Acknowledgement:} I would like to thank Zimo Hao, Haojie Hou, Qi Meng, Zhenyao Sun and  Rongchan Zhu
for their valuable discussions.

\end{document}